\DeclareMathOperator{\lk}{lk}
\DeclareMathOperator{\sd}{sd}
\DeclareMathOperator{\rk}{rk}
\DeclareMathOperator{\st}{st}
\DeclareMathOperator{\id}{id}
\DeclareMathOperator{\Sym}{Sym}
\DeclareMathOperator{\odd}{odd}
\def\C{{\mathcal C}}
\def\G{{\mathcal G}}
\def\phi{\varphi}
\newcommand\commentout[1]{}
\newtheorem{theorem}{Theorem}[section]
\newtheorem{corollary}[theorem]{Corollary}
\newtheorem{proposition}[theorem]{Proposition}
\newtheorem{lemma}[theorem]{Lemma}
\theoremstyle{remark}
\newtheorem{example}[theorem]{Example}
\newtheorem{remark}[theorem]{Remark}
\newtheorem{problem}{Problem}
\theoremstyle{definition}
\newtheorem{definition}[theorem]{Definition}
\begin{document}

\title{Simplicial moves on balanced complexes}

\author{Ivan Izmestiev}
\address{University of Fribourg Department of Mathematics\\ Chemin du Mus\'ee 23 \\ CH-1700 Fribourg P\'erolles \\ SWITZERLAND}
\email{ivan.izmestiev@unifr.ch}
\thanks{I.I. was partially supported by the European Research Council under the European Union's Seventh Framework Programme (FP7/2007-2013)/\allowbreak ERC Grant agreement no.~247029-SDModels.}

\author{Steven Klee}
\address{Seattle University Department of Mathematics \\ 901 12th Avenue \\
Seattle, WA 98122, USA}
\email{klees@seattleu.edu}

\author{Isabella Novik}
\address{University of Washington Department of Mathematics \\ Box 354350 \\
Seattle, WA 98195, USA}
\email{novik@math.washington.edu}
\thanks{I.N. was partially supported by the National Science Foundation grant DMS-1361423}

\date{\today}

\thanks{}

% abstract information
\begin{abstract}
We introduce a notion of cross-flips: local moves that transform a balanced (i.e., properly $(d+1)$-colored)  triangulation of a combinatorial $d$-manifold into another balanced triangulation. These moves form a natural analog of bistellar flips (also known as Pachner moves).
Specifically, we establish the following theorem: any two
balanced triangulations of a closed combinatorial $d$-manifold can be connected by a sequence of cross-flips. Along the way we prove that for every $m \geq d+2$ and any closed combinatorial $d$-manifold $M$, two $m$-colored triangulations of $M$ can be connected by a sequence of bistellar flips that preserve the vertex colorings.
\end{abstract}

\maketitle

%%%%%%%%%%%%%%%%%%%
%%%%%%%%%%%%%%%%%%%

%%%%%%%%%%%%%%%%%%%%
\section{Introduction} 
%%%%%%%%%%%%%%%%%%%
%%%%%%%%%%%%%%%%%%%

Pachner \cite{Pachner-87, Pachner-91} introduced a finite set of moves called \emph{bistellar flips} or \emph{Pachner moves} that change the combinatorial structure of a simplicial complex without changing its homeomorphism type. Pachner proved the following surprising property: if $\Delta_1$ and $\Delta_2$ are two PL homeomorphic closed combinatorial manifolds, then they can be connected by a sequence of bistellar flips (see \cite{Pachner-87,Pachner-91} and also \cite[Theorem 5.9]{Lickorish-99}).

We cannot avoid mentioning the following spectacular applications of this result.
\begin{itemize}
\item The Turaev--Viro invariant \cite{Turaev-10,Turaev-Viro-92} associates a number to every triangulation of a $3$-manifold. In order to see that this is a manifold invariant, independent of the choice of a triangulation, one shows that it does not change under bistellar flips.
\item The celebrated $g$-theorem of Billera, Lee, McMullen, and Stanley \cite{Billera-lee-81,McMullen-71,Stanley-80} provides a complete characterization of possible face numbers of simplicial polytopes, and the $g$-conjecture posits that the same statement holds for the face numbers of all simplicial spheres. Although Stanley's original proof of the necessity of conditions of the $g$-theorem for simplicial polytopes relied on hard results from algebraic geometry, in the 1990s, McMullen \cite{McMullen-93,McMullen-96} found a more elementary proof using bistellar flips and results from convex geometry. 
\item While the $g$-conjecture for spheres is wide open at present, Swartz \cite{Swartz-14} recently proved it for any even-dimensional combinatorial sphere that can be obtained from the boundary complex of a simplicial polytope by using all but one particular bistellar flip \cite[Theorem~3.1]{Swartz-14}. 
\end{itemize}

Another important consequence of Pachner's result is that it allows one to search through the space of closed combinatorial manifolds. This approach is central to several software packages, for instance, BISTELLAR that Lutz started developing in mid-late 1990s \cite{BISTELLAR}. In the 20 years since then, this activity has led to several outstanding results in combinatorial topology including (a) many new examples of (vertex) minimal triangulations, (b) enumerating and cataloging vertex transitive triangulations of closed combinatorial manifolds of small dimension and with a small number of vertices (see \cite{manifold-page}), and even (c) characterizing all possible face numbers of twenty different $3$-manifolds \cite{Lutz-Sulanke-Swartz-09}. 

In this paper we discuss vertex-colored simplicial complexes.
A \emph{proper $m$-coloring} of a simplicial complex is a map from its vertex set to a set of cardinality $m$ (whose elements are called colors) such that adjacent vertices receive different colors. It is easy to see that every $d$-dimensional complex requires at least $d+1$ colors to be properly colored. One of our results is the following colored version of the Pachner theorem.

\begin{theorem}
\label{color-thm}
Let $\Delta$ and $\Gamma$ be PL homeomorphic closed combinatorial $d$-manifolds. Assume that $\Delta$ and $\Gamma$ are properly $m$-colored, $m \ge d+2$. Then there is a sequence of bistellar flips that transforms $\Delta$ into $\Gamma$ such that each intermediate complex is properly $m$-colored and the flips preserve the vertex colors.
\end{theorem}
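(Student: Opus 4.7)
The plan is to bootstrap from Pachner's classical theorem, which provides an uncolored bistellar sequence $\Delta = \Delta_0 \to \Delta_1 \to \cdots \to \Delta_N = \Gamma$ realizing the PL homeomorphism, and to upgrade it to a color-preserving sequence by carefully coloring intermediate complexes and inserting additional preparatory flips when needed. Along the way I would inductively maintain a proper $m$-coloring $c_i$ on each $\Delta_i$, starting from $c_0 = c_\Delta$.

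A bistellar flip $\Delta_i \to \Delta_{i+1}$ replaces $\sigma \ast \partial \tau$ by $\partial \sigma \ast \tau$, where $|\sigma| + |\tau| = d+2$. I would distinguish three cases. First, if $|\tau| = 1$, the flip introduces a new vertex whose color I choose freely among the $\ge m - (d+1) \ge 1$ colors not used by the $d+1$ vertices of $\sigma$. Second, if $|\sigma| = 1$, the flip deletes a vertex and the restricted coloring is automatically proper. Third, if $|\sigma|, |\tau| \ge 2$, the vertex set is preserved; and when $|\tau| \ge 3$, all edges of $\tau$ lie inside $\partial \tau \subset \Delta_i$, so the vertices of $\tau$ already carry pairwise distinct colors in $c_i$ and $c_i$ remains proper on $\Delta_{i+1}$. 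The only delicate situation is the \emph{edge-creation} flip with $|\tau|=2$, where the two endpoints of $\tau$ are non-adjacent in $\Delta_i$ and might share a color, causing the naive re-use of $c_i$ to fail on $\Delta_{i+1}$.

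The main obstacle is therefore to repair such color-incompatible edge-creation flips and, at the very end, to reconcile the coloring produced on $\Delta_N$ with the prescribed coloring $c_\Gamma$. For both tasks I would develop a local \emph{recoloring subroutine}: given a vertex $v$ of current color $a$ and a target color $b$ not used by any neighbor of $v$, produce a sequence of color-preserving flips that leaves the underlying simplicial complex unchanged and alters only the color of $v$ from $a$ to $b$. The scheme is to introduce a twin vertex $v'$ carrying color $b$ via a stellar subdivision of a facet containing $v$, to transfer the combinatorial role of $v$ onto $v'$ through a controlled sequence of flips inside the star of $v$ (exploiting that $\lk(v)$ is a combinatorial $(d-1)$-sphere), and finally to remove $v$ via an inverse stellar subdivision. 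The hardest parts of the argument are (a) guaranteeing that an admissible target color $b$ is always available---possibly after auxiliary recolorings of nearby vertices to free up a color, again leveraging the slack from $m \ge d+2$---and (b) verifying that the role-transfer step can always be realized by color-preserving flips. Once this subroutine is established, it is invoked to prepare the coloring before each problematic edge-creation flip in the Pachner sequence, and afterwards applied vertex by vertex to transform the terminal coloring on $\Delta_N$ into $c_\Gamma$, completing the proof.
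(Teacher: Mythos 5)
Your route is genuinely different from the paper's and, as written, it has two gaps, one of which I think is fatal to the plan in its current form.

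The paper does not try to repair an arbitrary Pachner sequence. Instead it first builds a \emph{shellable pseudo-cobordism} $(\Omega,\varphi,\psi)$ between $\Delta$ and $\Gamma$ whose ends are \emph{disjoint}, $\varphi(\Delta)\cap\psi(\Gamma)=\{\emptyset\}$ (Corollary~\ref{step2}, via Lemma~\ref{lem:eliminate-face}, which ``pushes'' $\Delta$ off itself one vertex at a time). Because the two ends share no vertices, the union $\varphi(\Delta)\cup\psi(\Gamma)$ is already properly $m$-colored, and the coloring-extension result (Theorem~\ref{ext-coloring}) lets one subdivide the \emph{interior} of $\Omega$ to extend this to a proper $m$-coloring of all of $\Omega$. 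Proposition~\ref{subdivision-preserves-shellability} guarantees the subdivided cobordism is still shellable, and its shelling order is then read off as a colored flip sequence. The whole point of this architecture is that it never has to ask whether a ``free color'' exists locally: coloring obstructions are dissolved globally by subdividing the interior of the cobordism away from both ends.

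Your plan runs straight into exactly the local obstruction the paper's architecture is designed to avoid. In the edge-creation case $|\tau|=2$ with $\tau=\{u,w\}$ and $c_i(u)=c_i(w)$, you want to recolor $u$ (or $w$) to a color $b$ that is unused by every current neighbor of $u$. Such a $b$ need not exist: the slack $m\ge d+2$ only guarantees a free color within any single facet, not within the full vertex star of $u$, and $\lk(u)$ can have arbitrarily many vertices realizing all $m-1$ remaining colors (already for $d=2$, $m=4$, a hexagonal link colored $0,1,2,0,1,2$ around a vertex of color $3$). You acknowledge this and propose ``auxiliary recolorings of nearby vertices to free up a color,'' but those auxiliary vertices face the identical obstruction, and you give no argument that this process terminates, stays local, or can be scheduled consistently. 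This is not a routine detail: it is the crux of the theorem, and without it the inductive maintenance of $c_i$ breaks down.

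The second gap is the role-transfer subroutine itself. Even granting an admissible target color $b$, you need a sequence of color-preserving flips, supported inside $\overline{\st}(v)$, that returns the same complex with $v$ renamed to $v'$. This is plausible (it is essentially Lemma~\ref{lem:eliminate-face} applied with $\mathcal K=\overline{v'}\ast\lk(v)$, whose shelling by Lemma~\ref{reverse-shelling-ball} gives the flips), but you cannot invoke it casually as ``exploiting that $\lk(v)$ is a combinatorial $(d-1)$-sphere'': one must produce a shelling of a cone over $\lk(v)$ and check that the induced flips stay properly colored, which is a nontrivial chunk of the paper's machinery. In short, by the time you have filled both gaps rigorously you will have reconstructed most of the pseudo-cobordism/shelling framework of Sections~4--5, but without the disjoint-ends trick that makes the coloring step unconditional. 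I would recommend adopting that trick: push $\Delta$ to a copy $\Delta'$ disjoint from it, color the resulting cobordism by subdividing its interior, and read the flips off the shelling, rather than patching a fixed Pachner sequence.
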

\noindent If a bistellar flip introduces a new vertex to the complex, then this vertex can be colored in any of $m$ colors that is different from the colors of its neighbors.

Our primary focus in this paper will be on balanced simplicial complexes.  A $d$-dimensional simplicial complex is called balanced if it admits a proper vertex coloring with a set of $d+1$ colors.  These complexes were introduced by Stanley \cite{Stanley-79} where they were called completely balanced complexes.
Balanced complexes form a fascinating class of objects that arise often in combinatorics, algebra, and topology. For instance, the barycentric subdivision of any regular CW complex is balanced (and PL homeomorphic to the original complex); therefore, every triangulable space has a balanced triangulation. Coxeter complexes and Tits buildings form another large family of balanced complexes.

Bistellar flips may destroy the balanced property, so that Theorem \ref{color-thm} does not hold for $m=d+1$.
The main objective of this paper is to develop a balanced substitute for bistellar flips: local moves that preserve balancedness while still allowing one to connect any pair of PL homeomorphic balanced combinatorial manifolds.

Roughly speaking (we defer all precise definitions until the following sections), a bistellar flip exchanges a $d$-ball in the boundary of the $(d+1)$-simplex with its complement. In the balanced setting, the cross-polytope often serves as a substitute for the simplex.  For example, just as the boundary of the $(d+1)$-simplex is a minimal triangulation of the $d$-sphere, the boundary of the $(d+1)$-dimensional cross-polytope is the minimal balanced triangulation of the $d$-sphere; for deeper parallels between the two see \cite{Klee-Novik-14}. Thus it is natural to define a \emph{cross-flip} as an operation that exchanges a $d$-ball in the boundary of the $(d+1)$-cross-polytope with its complement. If this move is applied to a balanced complex, the resulting complex is also balanced.

With this definition in hand, our main result is the following.
\begin{theorem} \label{main-thm}
Let $\Delta$ and $\Gamma$ be balanced simplicial complexes.
Assume further that $\Delta$ and $\Gamma$ are closed combinatorial manifolds.
Then $\Delta$ and $\Gamma$ are  PL homeomorphic if and only if there is a
sequence of cross-flips that transforms $\Delta$ into $\Gamma$.
\end{theorem}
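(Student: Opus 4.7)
The ``if'' direction is essentially definitional: each cross-flip exchanges a sub-$d$-ball $B \subset \partial \diamond^{d+1}$ with the complementary $d$-ball, an operation that is a PL homeomorphism since $\partial \diamond^{d+1}$ is a $d$-sphere. Hence the substance of the theorem lies in the converse, which I would prove in three stages.

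First, I would enumerate, up to the symmetry group of $\diamond^{d+1}$, all pairs of complementary sub-$d$-balls of $\partial \diamond^{d+1}$. Since $\partial \diamond^{d+1}$ has only $2(d+1)$ vertices, this catalogue is finite and supplies the vocabulary for later constructions. Second, I would invoke Theorem \ref{color-thm} with $m = d+2$: because $\Delta$ and $\Gamma$ are $(d+1)$-colored, they are a fortiori $(d+2)$-colored (one color is simply unused), so the theorem produces a color-preserving sequence of bistellar flips $\Delta = \Delta_0 \to \Delta_1 \to \cdots \to \Delta_N = \Gamma$. Intermediate complexes need not be balanced, but the vertex coloring is preserved throughout.

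The crucial step is to turn this Pachner sequence into a sequence of cross-flips. The guiding observation is that the $d+2$ colors identify the vertices of the $(d+1)$-simplex $\sigma$ supporting a flip with $d+2$ of the $2(d+1)$ vertices of $\diamond^{d+1}$, one from each antipodal pair. Using this, I would establish a key local lemma: any color-preserving bistellar flip performed inside a $(d+2)$-colored neighborhood of a balanced complex can be realized by a bounded sequence of cross-flips supported near the flip. To prove the lemma one would first use preparatory cross-flips to create the ``missing'' antipodal companions of the vertices of $\sigma$, then recognize the resulting transformation as one of the moves from the catalogue of the first stage, and finally restore the surrounding structure with a reverse sequence of cross-flips. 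Since cross-flips preserve balancedness and each bistellar flip is handled locally, assembling these local simulations yields a global cross-flip sequence from $\Delta$ to $\Gamma$ through balanced manifolds.

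The main obstacle lies in this final stage: a bistellar flip takes place inside a $(d+2)$-vertex sphere while a cross-flip takes place inside the larger $2(d+1)$-vertex sphere $\partial \diamond^{d+1}$, so the local enlargement that produces the ``missing'' antipodal vertices must be carried out without disturbing the coloring or the global combinatorics. The finite catalogue from the first stage keeps the required case analysis tractable, and an induction on $d$ is likely needed to reduce the enlargement step to lower-dimensional cross-flips applied to links of appropriate faces.
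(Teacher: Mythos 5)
The ``if'' direction is fine, but the converse as outlined has a gap that I do not see how to repair. You propose to take the color-preserving Pachner sequence $\Delta = \Delta_0 \to \Delta_1 \to \cdots \to \Delta_N = \Gamma$ furnished by Theorem~\ref{color-thm} with $m = d+2$ and to ``simulate'' each individual bistellar flip $\Delta_i \to \Delta_{i+1}$ by a bounded sequence of cross-flips. But cross-flips, by construction, take balanced complexes to balanced complexes, whereas the intermediate $\Delta_i$ in the Pachner sequence are only $(d+2)$-colored and in general \emph{not} balanced. So if $\Delta_i$ is balanced and $\Delta_{i+1}$ is not, there is \emph{no} sequence of cross-flips from $\Delta_i$ to $\Delta_{i+1}$, and the ``key local lemma'' cannot hold as stated. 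Put differently, a flip-by-flip translation is impossible precisely because balancedness is destroyed along the Pachner sequence; the local enlargement you describe cannot restore a property the target complex does not have. There is also a smaller combinatorial slip: the $(d+1)$-simplex supporting a flip has $d+2$ vertices, while $\partial\C_{d}$ has only $d+1$ antipodal pairs, so ``one vertex from each pair'' gives $d+1$, not $d+2$, vertices.

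The paper circumvents this obstacle by refusing to convert bistellar flips one at a time. Instead it records the whole Pachner sequence in a single $(d+1)$-dimensional object --- a shellable pseudo-cobordism $\Omega$ between $\Delta$ and $\Gamma$ (Section~4) --- then arranges that $\varphi(\Delta)$ and $\psi(\Gamma)$ are disjoint inside $\Omega$ (Corollary~\ref{step2}), subdivides $\Omega$ away from these two subcomplexes to make it balanced as a $(d+1)$-dimensional poset (Corollary~\ref{balanced->balanced} and Proposition~\ref{subdivision-preserves-shellability}), and finally applies the diamond operation of Section~\ref{sec:Diamond} to all of $\Omega$ at once. This replaces every $(d+1)$-simplex of the cobordism by a $(d+1)$-cross-polytope without touching $\Delta$ or $\Gamma$, and the shelling order on $\Omega$ then reads off as a sequence of cross-flips in which \emph{every} intermediate complex is balanced by construction. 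It is this global ``cobordism, then diamond'' manoeuvre, rather than a local flip-by-flip translation, that makes the argument work, and it is the main idea that your proposal is missing.
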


\noindent In particular, every balanced combinatorial sphere can be obtained from
the boundary of a cross-polytope by a sequence of cross-flips.

A cross-flip not only preserves the balanced property, but also induces a 
coloring of the transformed complex from a coloring of the complex to which the 
cross-flip was applied. The corresponding strengthening of Theorem \ref{main-thm} is 
still true: there is a sequence of cross-flips that transforms 
$\Delta$ into $\Gamma$ and a given coloring of $\Delta$ into a given coloring of $\Gamma$.

We expect that Theorem \ref{main-thm} will have many applications in the balanced
setting just as Pachner's theorem has in the non-balanced one. For instance, we
hope that this theorem will lead to the proof of the $g$-conjecture for
\emph{all} balanced combinatorial spheres. 

The rest of the paper is structured as follows. In Section 2 we discuss
basics of simplicial complexes, recall a few theorems, and introduce several key
definitions including the definition of a cross-flip. Section 3 can be considered
as a warm-up section: there, after developing some tools, we prove Theorems \ref{color-thm} and
\ref{main-thm} for the case of combinatorial spheres. The proof of the general
case of Theorems \ref{color-thm} and \ref{main-thm} is much more technical, and requires more
definitions and preparation; these are discussed in Section 4. Finally, in
Section 5, we prove Theorems \ref{color-thm} and \ref{main-thm} in their full generality. We close in Section 6
with some remarks and open questions.

%%%%%%%%%%%%%%%%%%%
%%%%%%%%%%%%%%%%%%%
\section{Background and definitions -- part I}
%%%%%%%%%%%%%%%%%%%
%%%%%%%%%%%%%%%%%%%
In this section we discuss basics of simplicial complexes, bistellar flips and
shellability; we also introduce our main definition --- the notion of a
cross-flip.

\subsection{Simplicial complexes and combinatorial manifolds}
A \textit{simplicial complex} $\Delta$ on a (finite) vertex set $V = V(\Delta)$
is a collection of subsets $F \subseteq V(\Delta)$ called \textit{faces} with the
property that if $F \in \Delta$ and $G \subseteq F$, then $G \in \Delta$.  The
\textit{dimension} of a face is $\dim(F) = |F|-1$ and the dimension of $\Delta$
is $\dim(\Delta) = \max\{\dim(F) \mid F \in \Delta\}$.  A \textit{facet} of
$\Delta$ is a maximal face under inclusion, and we say that $\Delta$ is
\textit{pure} if all of its facets have the same dimension.
If $\Gamma$ and $\Delta$ are simplicial complexes on
disjoint vertex sets, their \textit{join} is the simplicial complex
\[
\Gamma\ast\Delta:= \{F \cup G \mid F \in \Delta \text{ and } G \in \Gamma\}.
\]

If $F$ is a finite set, we write $\overline{F}:= \{G \subseteq F\}$ to denote the simplex
on $F$ and $\partial \overline{F}:=\{G \subsetneq F\}$ to denote the boundary
complex of the simplex on $F$.  When $F = \{a\}$ consists of a single vertex, we write $\overline{a}$ to denote the vertex $a$, viewed as a $0$-dimensional simplicial complex. If the vertex set of the simplex is not
important, we use $\sigma^d$ and $\partial \sigma^d$ to denote a
$d$-simplex and its boundary complex, respectively.

Let $\Delta$ be a simplicial complex and $F \in \Delta$ be a face. The (closed) \emph{star} of
$F$ in $\Delta$ and the \emph{link} of $F$ in $\Delta$ both describe the local
structure of $\Delta$ around $F$:
\[
\st_{\Delta}(F):= \{G \in \Delta \mid F \cup G \in \Delta\}, \quad \lk_{\Delta}(F):= \{G \in \st_\Delta(F) \mid F \cap G = \emptyset\}.
\]
Note that $\st_{\Delta}(F) = \overline{F} \ast \lk_{\Delta}(F)$.
The \emph{deletion} of $F$ from $\Delta$ is defined as
\[
\Delta\setminus F=\{G \in \Delta \mid F\not\subseteq G\}.
\]

A \emph{combinatorial $d$-sphere} (respectively, a {\emph{combinatorial $d$-ball}}) is a simplicial complex PL homeomorphic to $\partial\sigma^{d+1}$ (respectively, $\sigma^d$). A closed \textit{combinatorial $d$-manifold} is a connected simplicial complex with the property that the link of each vertex is a combinatorial $(d-1)$-sphere.

\subsection{Stellar moves}
Let $\Delta$ be a simplicial complex and $F$ a face of $\Delta$.  The
\textit{stellar subdivision} of $\Delta$ at $F$ (also known as the
\textit{starring} at $F$) consists of (i) removing $F$
and all faces containing it, (ii) introducing a new vertex $a$ to
$V(\Delta)$, and (iii) adding new faces in $\overline{a} \ast \partial \overline{F} \ast
\lk_{\Delta}(F)$ to $\Delta$: 
\[
\sd_F(\Delta) := (\Delta \setminus F) \cup \left(\overline{a} \ast \partial
\overline{F} \ast \lk_{\Delta}(F)\right).
\]

Complexes $\Delta$ and $\Gamma$ are called \textit{stellarly equivalent} if one can be obtained from the other through a sequence of stellar subdivisions and their inverses (stellar welds).
The following is a classical result in combinatorial topology, a proof can be found in \cite[Theorem 4.5]{Lickorish-99}.

\begin{theorem}[Alexander \cite{Alexander-30}]
\label{thm:Alexander}
Two simplicial complexes are PL homeomorphic if and only if they are stellarly equivalent.
\end{theorem}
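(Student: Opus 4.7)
For the forward direction, a stellar subdivision $\sd_F(\Delta)$ leaves the underlying polyhedron unchanged: placing the new vertex $a$ at any interior point of the geometric realization of $F$ makes $\overline{a} \ast \partial \overline{F} \ast \lk_\Delta(F)$ into a genuine simplicial subdivision of $\st_\Delta(F) = \overline{F} \ast \lk_\Delta(F)$, which, combined with the identity on $\Delta \setminus F$, yields a PL homeomorphism $|\Delta| \to |\sd_F(\Delta)|$. Composing finitely many such homeomorphisms (and their inverses, for stellar welds) handles the ``if'' direction.

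For the converse I would unpack the definition of PL homeomorphism: two simplicial complexes are PL homeomorphic precisely when they admit simplicial subdivisions $\Delta^\ast$ and $\Gamma^\ast$ that are isomorphic as abstract simplicial complexes. Isomorphic complexes are trivially stellarly equivalent, so the theorem reduces to the following key claim: every simplicial subdivision $\Delta^\ast$ of a simplicial complex $\Delta$ is stellarly equivalent to $\Delta$.

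To prove the key claim, I would first verify that for every $N \ge 1$ the iterated barycentric subdivision $\sd^N(\Delta)$ is stellarly equivalent to $\Delta$: one realizes $\sd(\Delta)$ by starring the faces of $\Delta$ in order of decreasing dimension, each step being literally a stellar subdivision, and then iterates. Next, given an arbitrary subdivision $\Delta^\ast$ of $\Delta$, I would choose $N$ large enough (measured via the mesh of $\Delta^\ast$ relative to a fixed linear realization of $|\Delta|$) so that $\sd^N(\Delta)$ refines $\Delta^\ast$. It would then remain to show that $\sd^N(\Delta)$ can likewise be reached from $\Delta^\ast$ by stellar moves, thereby linking $\Delta$ to $\Delta^\ast$ through the common refinement $\sd^N(\Delta)$.

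The hard part is precisely this last step: a generic subdivision $\Delta^\ast$ carries no a priori compatibility with the barycentric structure of $\sd^N(\Delta)$, so the inductive elimination of discrepancies---starring faces of $\Delta^\ast$ one at a time in a way that genuinely progresses toward $\sd^N(\Delta)$---demands a careful choice of which face to star next and a complexity measure (e.g. the number of $\Delta^\ast$-simplices meeting ``new'' vertices of $\sd^N(\Delta)$) that strictly decreases at each move. Rather than reconstruct Alexander's intricate argument in full detail, I would invoke the presentation in \cite[Theorem 4.5]{Lickorish-99} for this final technical step.
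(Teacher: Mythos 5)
The paper does not give its own proof of this theorem; it states it as a classical result and points to Lickorish \cite[Theorem 4.5]{Lickorish-99}, so there is no in-paper argument for you to match. Evaluating your sketch on its own merits: the forward direction and the reduction to the key claim (every simplicial subdivision $\Delta^\ast$ of $\Delta$ is stellarly equivalent to $\Delta$) are both correct and standard.

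However, the route you propose for the key claim contains a genuine error, not merely a deferred technicality. You assert that by taking $N$ large enough, the iterated barycentric subdivision $\sd^N(\Delta)$ will \emph{refine} $\Delta^\ast$. That is false. Small mesh is not the same as refinement: for $\sd^N(\Delta)$ to refine $\Delta^\ast$, every vertex of $\Delta^\ast$ must be a vertex of $\sd^N(\Delta)$, and more generally every simplex of $\sd^N(\Delta)$ must lie inside a single simplex of $\Delta^\ast$. A minimal counterexample: let $\Delta$ be the single edge $[0,1]$ and let $\Delta^\ast$ be the subdivision obtained by starring at $1/3$. The vertices of $\sd^N(\Delta)$ are exactly the dyadic rationals $k/2^N$, so $1/3$ is never a vertex and no $\sd^N(\Delta)$ refines $\Delta^\ast$. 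The same phenomenon occurs for essentially any subdivision whose new vertices are not in ``generic position'' relative to the barycentric lattice; tiny simplices of $\sd^N(\Delta)$ will straddle faces of $\Delta^\ast$ near those vertices. Because this common-refinement step is the entire content of the converse, the argument collapses at that point; it is not the sort of gap that can be papered over by citing Lickorish for ``the final technical step,'' since Lickorish's actual proof does not pass through a barycentric common refinement at all. (Indeed, the paper itself remarks that it is an open problem whether two PL homeomorphic complexes always admit a common refinement reachable by stellar subdivisions alone.) The standard Alexander/Lickorish argument instead proceeds by induction on dimension, using a lemma that any subdivision of a cone $a \ast K$ is stellarly equivalent to the cone on the induced subdivision of $K$, and then working one simplex of $\Delta$ at a time; you would need to replace your barycentric step with an argument of that shape.
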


Alexander and, independently, Newman \cite{Newman-31} improved this by showing that subdivisions and welds on edges suffice.
Recently, Lutz and Nevo \cite{Lutz-Nevo-14} proved
a result of a similar flavor showing that PL homeomorphic flag manifolds can be
connected by a sequence of edge subdivisions and edge welds that preserve flagness.

It is an open problem whether any two PL homeomorphic complexes can be related by a sequence of stellar subdivisions and stellar welds so that all subdivisions appear before all welds. In other words, it is not known whether any two PL homeomorphic complexes admit a common refinement through stellar subdivisions.

\subsection{Bistellar moves}
Let $\Delta$ be a simplicial complex, and assume that
\begin{equation}
\label{eqn:BiMove1}
A \in \Delta, \quad B \notin\Delta, \quad \text{and} \quad \lk_\Delta (A) = \partial\overline{B},
\end{equation}
so that $\st_{\Delta}(A) = \overline{A}*\partial\overline{B}$.  Then the process of removing $\overline{A} \ast \partial\overline{B}$ and replacing it with $\partial \overline{A} \ast \overline{B}$ is called a bistellar move or a \textit{bistellar flip}:
\[
\Delta \to \Delta \setminus (\overline{A} \ast \partial\overline{B}) \cup (\partial \overline{A} \ast \overline{B}).
\]
The inverse of a bistellar flip is itself a bistellar flip ($A$ and $B$ exchange roles). Two complexes are called \textit{bistellar equivalent} if one can be obtained from the other through a sequence of bistellar flips.

For closed combinatorial manifolds the assumption \eqref{eqn:BiMove1} is equivalent to
\begin{equation}
\label{eqn:BiMove2}
A \cap B = \emptyset, \quad |A \cup B| = d+2 \quad \text{and} \quad \Delta_{A \cup B} = \overline{A}*\partial\overline{B}.
\end{equation}
(Here $\Delta_W:=\{F\in \Delta \mid F\subseteq W\}$ denotes the subcomplex of $\Delta$ induced by vertices in $W$.)
Indeed, \eqref{eqn:BiMove1} implies \eqref{eqn:BiMove2} for any pure $d$-dimensional simplicial complex. Conversely, assume that $\Delta$ is a closed combinatorial manifold, and $A$ and $B$ satisfy \eqref{eqn:BiMove2}. Then $\partial\overline{B} \subseteq \lk_{\Delta}(A)$ because $\overline{A}*\partial\overline{B} \subseteq \Delta$. And since $\lk_{\Delta}(A)$ and $\partial\overline{B}$ are PL spheres of the same dimension, it must be the case that $\lk_{\Delta}(A) = \partial\overline{B}$.

The bistellar flip with $|B|=1$ (so that $\partial\overline{B} = \{\emptyset\}$) is a stellar subdivision at a facet. More generally, every bistellar flip can be represented as a composition of a stellar subdivision (done first) and a stellar weld. Thus bistellar equivalence implies stellar equivalence. On the other hand, Pachner has shown that for closed combinatorial manifolds bistellar equivalence is as strong as stellar equivalence.

\begin{theorem}[Pachner \cite{Pachner-91}]
\label{homeomorphic-bistellar-equivalent}
Two closed combinatorial manifolds are PL homeomorphic if and only if they are
bistellar equivalent.
\end{theorem}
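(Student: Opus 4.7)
The plan is to prove both directions separately and to reduce the hard direction, via Alexander's theorem (Theorem~\ref{thm:Alexander}), to showing that every stellar subdivision on a closed combinatorial manifold can be simulated by a sequence of bistellar flips. For the ``if'' direction, observe that a bistellar flip replaces the PL $d$-ball $\overline{A}*\partial\overline{B}$ inside $\Delta$ with the PL $d$-ball $\partial\overline{A}*\overline{B}$, both sharing the boundary $\partial\overline{A}*\partial\overline{B}$, so the modification clearly preserves PL homeomorphism type; equivalently, each bistellar flip factors as a stellar subdivision followed by a stellar weld, hence bistellar equivalence implies stellar equivalence, and Theorem~\ref{thm:Alexander} then yields PL homeomorphism.

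For the ``only if'' direction, let $\Delta$ and $\Gamma$ be PL homeomorphic closed combinatorial $d$-manifolds. By Theorem~\ref{thm:Alexander} they are stellarly equivalent; since stellar welds are inverses of stellar subdivisions and bistellar flips are closed under inversion (swapping $A$ and $B$), it suffices to prove the following claim: if $M$ is a closed combinatorial $d$-manifold and $F\in M$, then $\sd_F(M)$ is bistellar equivalent to $M$. I would prove this by induction on the codimension of $F$. For the base case, when $F$ is a facet, $\sd_F(M)$ is literally a bistellar $0$-move at $F$---take $A=F$ and $B=\{a\}$ in~\eqref{eqn:BiMove1}. For the inductive step with $\dim F<d$, the strategy is to first introduce a new vertex $a$ via a bistellar $0$-move at some facet $F'\supseteq F$, and then to shuttle $a$ across $\st_M(F)=\overline{F}*\lk_M(F)$ by a sequence of bistellar flips supported entirely inside this star, terminating at $\sd_F(M)$; the existence of such a sequence is guaranteed because both the intermediate and target configurations restrict to triangulations of the same PL $d$-ball $\overline{F}*\lk_M(F)$ with identical boundary $\partial\overline{F}*\lk_M(F)$.

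The main obstacle is verifying that each intermediate flip in the inductive step is a \emph{legal} bistellar flip of $M$: the link condition~\eqref{eqn:BiMove1} must hold at every stage, and the flips must not disturb the complex outside $\st_M(F)$. Confining the flips to the star handles the second concern, but the first forces us to prove, essentially as a subroutine, the following relative version of Pachner's theorem: two triangulations of a PL $d$-ball that agree on the boundary are connected by bistellar flips supported in the interior. This relative statement is as deep as the absolute sphere case, and in fact is how Pachner's original proof proceeds---by constructing a shellable PL $(d+1)$-cobordism between the two balls and reading off the required bistellar sequence one elementary shelling step at a time, each such step corresponding to a bistellar flip on the evolving boundary.
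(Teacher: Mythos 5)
The paper does not prove this theorem; it is stated as a known result attributed to Pachner, with the reader referred to Lickorish's survey (his Theorem 5.9) for a proof. So there is no in-paper argument to compare against, only the external proof.

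Your reduction via Alexander's theorem---that it suffices to show $\sd_F(M)$ and $M$ are bistellar equivalent for every face $F$ of a closed combinatorial manifold $M$---is the correct and standard strategy, and the ``if'' direction is fine (the paper itself records the factoring of a bistellar flip as a stellar subdivision followed by a weld). The problem is that your inductive step leaves exactly the hard part undone. You reduce subdividing a non-facet $F$ to a ``relative Pachner theorem'': two triangulations of a combinatorial $d$-ball agreeing on the boundary are connected by interior bistellar flips. You then acknowledge this is ``as deep as the absolute sphere case'' and gesture toward shellable cobordisms. That is precisely the gap, and it is a real one: the relative statement is not logically weaker than the theorem you are proving, and the ``induction on codimension of $F$'' is not actually an induction, since the inductive step invokes a $d$-dimensional relative result rather than a lower-codimension instance of the claim being proved. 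The technical input that makes the argument go through---Pachner's theorem that every combinatorial $d$-sphere bounds a shellable $(d+1)$-ball (quoted in this paper as Theorem~\ref{shellable-ball}), together with the elementary-shelling-to-bistellar-flip dictionary (Lemma~\ref{lem:ShellToFlip})---is neither established nor explicitly invoked in your proposal. As written, this is an honest and correctly-oriented reduction with its central lemma missing, not a proof.
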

A proof of Theorem 2.2  can also be found in Lickorish's paper \cite[Theorem 5.9]{Lickorish-99}. As Lickorish points out, some of these ideas go back to Newman \cite{Newman-26}.

An important difference between stellar and bistellar moves is that there are finitely many combinatorially distinct bistellar moves but infinitely many stellar ones, even if we restrict our scope to combinatorial manifolds. Indeed, the link of an edge is a $(d-2)$-sphere;  and since for $d \ge 3$ there are infinitely many combinatorial types of $(d-2)$-spheres, there are infinitely many combinatorially distinct edge subdivisions. This makes bistellar flips more useful for applications, for instance those mentioned in the introduction.

\subsection{Shellings and shellability}  \label{subsect:shellability1}

A pure $d$-dimensional simplicial complex $\Delta$ is \textit{shellable} if its
facets can be ordered $F_0, F_1, \ldots, F_t$ in such a way that, for each $j >
0$, $\overline{F}_j \cap \bigcup_{i < j} \overline{F}_i$ is pure of dimension
$d-1$.  Equivalently, for each $j\geq 0$ there exists a face $r(F_j) \subseteq
F_j$ such that $\overline{F}_j \setminus \bigcup_{i < j} \overline{F}_i =
[r(F_j),F_j]$.  Here and henceforth, $[r(F_j),F_j] = \{G \mid r(F_j) \subseteq G
\subseteq F_j\}$ denotes the interval from $r(F_j)$ to $F_j$ in the poset of all
faces, ordered by inclusion.  The faces $r(F_j)$ are often called the
\textit{restriction faces} of the shelling.   Also, if $|F|=d+1$ has the
property that $F \notin \Delta$ but $\overline{F}$ intersects $\Delta$ along a
pure $(d-1)$-dimensional subcomplex of $\partial \overline{F}$, then the process
of adding $\overline{F}$ to $\Delta$ is called an \textit{elementary shelling}. 
In other words, a $d$-dimensional simplicial complex is shellable if it can be 
obtained from the $d$-simplex through a sequence of elementary shellings.

Bruggesser and Mani \cite{Bruggesser-Mani} showed that the boundary complex of a
simplicial polytope is shellable; and moreover, that a shelling can be chosen to
have the set of facets containing any given face as its initial segment.  Those and the
following results on shellability will be useful for our proofs.

\begin{theorem}[{Provan--Billera, \cite[Corollary 2.9]{Provan-Billera-80}}] 
\label{subdiv-shellability}
If $\Omega$ is a shellable complex and $F$ is a face of $\Omega$, then
$\sd_F(\Omega)$ is also shellable.
\end{theorem}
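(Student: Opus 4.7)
The plan is to construct a shelling of $\sd_F(\Omega)$ directly from a given shelling $F_0, F_1, \ldots, F_t$ of $\Omega$ with restriction faces $r_j := r(F_j)$, by subdividing ``in place.'' Let $a$ denote the new vertex. The facets of $\sd_F(\Omega)$ split into original facets $F_j$ with $F \not\subseteq F_j$ (type~I) and new facets $G_{j,v} := \{a\} \cup (F_j \setminus \{v\})$ for $F_j \supseteq F$ and $v \in F$ (type~II). The proposed shelling traverses $F_0, \ldots, F_t$ in order: at a type-I index, insert $F_j$ with restriction $r_j$; at a type-II index, insert the $|F|$ facets $G_{j,v}$ as a contiguous block, ordered by a linear order on $F$ that lists the vertices of $F \cap r_j$ first.

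Verification at a type-I step is immediate: any face $H \subseteq F_j$ lying in a previously-inserted type-II facet $G_{l,v}$ must satisfy $H \subseteq F_l \setminus \{v\} \subseteq F_l$, so coverage matches the original shelling of $\Omega$ and $r_j$ remains a valid restriction face. At a type-II step, I would analyze $H \in \overline{G_{j, v_i}}$ by whether $a \in H$. Faces with $a \notin H$ lie in $\overline{F_j}$ and are previously covered iff $H \not\supseteq r_j$ or $v_{i'} \notin H$ for some $i' < i$ in the block. For faces with $a \in H$, write $H = \{a\} \cup H'$; the coverage condition becomes $v_{i'} \notin H'$ for some $i' < i$, or $H' \subseteq F_l$ for some earlier type-II block $F_l \supseteq F$. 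The shelling property of $\Omega$ applied to the face $H' \cup F \subseteq F_j$ rewrites this last condition as $r_j \setminus F \not\subseteq H'$.

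The main obstacle is arranging that the newly-added faces of $G_{j, v_i}$ form a \emph{single} interval $[r(G_{j, v_i}), G_{j, v_i}]$ in the face poset, rather than two disjoint intervals coming from the $a \notin H$ and $a \in H$ cases. This forces a compatibility of the two lower bounds, which reduces to the identity $(r_j \setminus F) \cup (F \cap r_j) = r_j$ combined with the ``$F \cap r_j$ first'' ordering. That choice ensures that by the time we reach a vertex $v_i \in F \setminus r_j$, all vertices of $F \cap r_j$ have already been omitted earlier in the block, so the lower bound of the $a \notin H$ piece coincides (after adjoining $a$) with the lower bound of the $a \in H$ piece. Once the single-interval property is secured, each $\overline{G_{j, v_i}}$ meets the prior complex in a pure $(d-1)$-dimensional subcomplex, and the construction yields a shelling of $\sd_F(\Omega)$.
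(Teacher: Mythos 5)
The paper does not prove this statement; it cites it as \cite[Corollary 2.9]{Provan-Billera-80}, so there is no in-paper proof to compare against. Your construction, however, is correct, and it is a genuinely self-contained alternative to the citation. You interleave the original type-I facets $F_j$ (those with $F\not\subseteq F_j$) with, for each type-II facet, a block $G_{j,v_1},\ldots,G_{j,v_{|F|}}$, and your two case analyses (by whether $a\in H$) correctly characterize coverage: for $a\notin H$ one gets ``$r_j\subseteq H$ and $v_{i'}\in H$ for all $i'<i$,'' and for $a\in H=\{a\}\cup H'$ one gets ``$r_j\setminus F\subseteq H'$ and $v_{i'}\in H'$ for all $i'<i$'' via the shelling property applied to $H'\cup F\subseteq F_j$. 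The crucial observation, which you isolate clearly, is that ordering the block with $F\cap r_j$ first makes the two lower bounds coincide: once $\{v_1,\ldots,v_{i-1}\}\supseteq F\cap r_j$, one has $(r_j\setminus F)\cup\{v_1,\ldots,v_{i-1}\}=r_j\cup\{v_1,\ldots,v_{i-1}\}$, so the restriction face is $r_j\cup\{v_1,\ldots,v_{i-1}\}$; and when $v_i\in F\cap r_j$ the $a\notin H$ case is vacuous (since then $v_i\in r_j$ would force $v_i\in H\subseteq F_j\setminus\{v_i\}$), so the restriction face is $\{a\}\cup(r_j\setminus F)\cup\{v_1,\ldots,v_{i-1}\}$ and the single-interval property holds trivially. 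One small point worth spelling out at the type-I step: you argue that a previously inserted $G_{l,v}$ covering $H$ forces $H\subseteq F_l$, but you also need the converse implication, namely that if $H\subseteq F_l$ for some $l<j$ then $H$ is covered by an already-inserted facet of $\sd_F(\Omega)$; this holds because if $F_l$ is type-II, then $F\not\subseteq H$ (as $F\not\subseteq F_j\supseteq H$) provides a $v\in F\setminus H$ with $H\subseteq G_{l,v}$. With that filled in, the argument is complete. Compared with the Provan--Billera route, which derives the result from the stronger and more structural notion of vertex decomposability, your proof is more elementary and produces an explicit shelling of $\sd_F(\Omega)$ together with explicit restriction faces, which is useful if one needs the shelling itself and not just its existence.
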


Recall that a subcomplex $\Delta$ of $\Omega$ is called induced (or full) if every face $F$ of $\Omega$ all of whose vertices are in $\Delta$ is also a face of $\Delta$.

\begin{theorem}[{Pachner, \cite[Theorem 5.8]{Pachner-91}}]
\label{shellable-ball}
For every combinatorial $d$-sphere $\Delta$, there exists a shellable
$(d+1)$-ball $\Omega$ whose boundary is $\Delta$.
Moreover, $\Omega$ can be chosen so that $\Delta$ is an induced subcomplex of
$\Omega$.
\end{theorem}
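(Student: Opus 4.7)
My plan is to build $\Omega$ in two stages: first produce some shellable $(d+1)$-ball $\Omega'$ with $\partial\Omega' = \Delta$, and then iteratively stellarly subdivide ``bad'' interior faces to force $\Delta$ to sit inside $\Omega'$ as an induced subcomplex, using Theorem~\ref{subdiv-shellability} (Provan--Billera) to preserve shellability at every step.

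For the first stage, Theorem~\ref{homeomorphic-bistellar-equivalent} (Pachner) yields a sequence of bistellar flips $\partial\sigma^{d+1} = \Delta_0 \to \Delta_1 \to \cdots \to \Delta_n = \Delta$, and I induct on $n$. In the base case, $\Omega_0 := \sigma^{d+1}$ is a shellable $(d+1)$-ball with $\partial\Omega_0 = \Delta_0$. For the inductive step, suppose $\Omega_i$ is shellable with $\partial\Omega_i = \Delta_i$, and the flip $\Delta_i \to \Delta_{i+1}$ excises $\overline{A} \ast \partial\overline{B}$ and inserts $\partial\overline{A} \ast \overline{B}$, where $A \in \Delta_i$ and $\lk_{\Delta_i}(A) = \partial\overline{B}$. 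I set $\Omega_{i+1} := \Omega_i \cup \overline{A \cup B}$, attaching the $(d+1)$-simplex on $A \cup B$. The gluing locus $\overline{A} \ast \partial\overline{B}$ is pure of dimension $(|A|-1) + (|B|-2) + 1 = d$, sits inside $\partial\Omega_i = \Delta_i$, and forms a proper pure $d$-subcomplex of $\partial\overline{A \cup B}$; hence the attachment is an elementary shelling and $\Omega_{i+1}$ remains a shellable $(d+1)$-ball. A direct facet count verifies $\partial\Omega_{i+1} = \Delta_{i+1}$, since the $d$-faces of $\overline{A} \ast \partial\overline{B}$ move from the boundary into the interior while the $d$-faces of $\partial\overline{A} \ast \overline{B}$ become boundary. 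Set $\Omega' := \Omega_n$.

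For the second stage, call a face $F \in \Omega'$ \emph{bad} if $V(F) \subseteq V(\Delta)$ but $F \notin \Delta$. Any bad face is interior to $\Omega'$ since $\partial\Omega' = \Delta$. Pick a bad face $F$ of maximal dimension and form $\sd_F(\Omega')$, introducing a fresh vertex $a_F \notin V(\Delta)$. Because $F$ is interior, $\partial \sd_F(\Omega') = \partial \Omega' = \Delta$. Every newly added face contains $a_F$ and so is not bad, while every removed face contains $F$; by maximality of $\dim F$, the only bad face removed is $F$ itself. Hence the count of bad faces strictly decreases, and by Theorem~\ref{subdiv-shellability} shellability is preserved. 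Iterating finitely many times produces the desired $\Omega$.

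The main obstacle lies in Stage~1: the attachment $\Omega_{i+1} = \Omega_i \cup \overline{A \cup B}$ is genuinely an elementary shelling only when $\overline{A \cup B} \cap \Omega_i = \overline{A} \ast \partial\overline{B}$, that is, when no face of $\Omega_i$ already contains the vertex set $B$. Although $B \notin \Delta_i$ by the flip's definition, it is conceivable that $B$ sits in $\Omega_i$ as an interior face introduced by some earlier attachment. One way around this is to use a fresh vertex whenever possible (which automatically handles flips with $|B|=1$) and otherwise to preemptively stellarly subdivide the offending interior face, thereby destroying it, before performing the next attachment. Verifying that such preemptive subdivisions can always be arranged compatibly with the remainder of the flip sequence is the principal technical hurdle.
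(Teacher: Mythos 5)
Your two-stage strategy mirrors Pachner's own, and Stage~2 is sound as written, but the obstacle you flag at the end of Stage~1 is a genuine gap, not merely a conceivable one, and the argument is incomplete without resolving it. Here is a concrete failure already for $d=1$: start from $\partial\sigma^2 = \partial\overline{\{a,b,c\}}$ and $\Omega_0 = \overline{\{a,b,c\}}$; flip in $d$ with $A=\{b,c\}$, $B=\{d\}$, so $\Omega_1 = \Omega_0 \cup \overline{\{b,c,d\}}$; flip in $e$ with $A=\{a,b\}$, $B=\{e\}$, so $\Omega_2 = \Omega_1 \cup \overline{\{a,b,e\}}$; then flip out $d$ with $A=\{d\}$, $B=\{b,c\}$. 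The simplex on $A\cup B = \{b,c,d\}$ that you would attach is already a facet of $\Omega_2$, so the attachment is not an elementary shelling and $\partial\Omega_3 \ne \Delta_3$. In other words, the sentence ``hence the attachment is an elementary shelling'' in your Stage~1 silently assumes $\overline{A\cup B} \cap \Omega_i = \overline{A}\ast\partial\overline{B}$, which is not automatic.

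The repair you sketch does work, and its verification is short enough that you should carry it out rather than defer it. If $B \in \Omega_i$, then $B$ is an interior face (because $B\notin\Delta_i=\partial\Omega_i$), so $\Omega_i' := \sd_B(\Omega_i)$ is again a shellable $(d+1)$-ball by Theorem~\ref{subdiv-shellability}, with $\partial\Omega_i' = \partial\Omega_i = \Delta_i$ since subdividing an interior face leaves the boundary untouched. After the subdivision no face of $\Omega_i'$ contains $B$, while $\overline{A}\ast\partial\overline{B} = \st_{\Delta_i}(A) \subseteq \Delta_i \subseteq \Omega_i'$ is preserved (none of its faces contains $B$, as $A\cap B=\emptyset$). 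Hence $\overline{A\cup B}\cap\Omega_i' = \overline{A}\ast\partial\overline{B}$, so attaching $\overline{A\cup B}$ to $\Omega_i'$ is a genuine elementary shelling, $\partial\Omega_{i+1} = \Delta_{i+1}$ by Lemma~\ref{lem:ShellToFlip}, and the induction proceeds. Since these extra subdivisions only modify interiors, the fixed flip sequence acting on the boundary is unaffected, so there is nothing further to ``arrange compatibly.'' (For the record, the paper imports this theorem from Pachner without reproducing its proof, so there is no in-paper argument against which to compare.)
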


We will also make use of the following well-known connection between elementary shellings and bistellar flips.
\begin{lemma}
\label{lem:ShellToFlip}
If $\Omega_1$ and $\Omega_2$ are combinatorial $d$-balls, and $\Omega_2$ is obtained from $\Omega_1$ by an elementary shelling  operation, then the boundary complex of $\Omega_2$ is obtained from the boundary complex of $\Omega_1$ by a bistellar flip.
\end{lemma}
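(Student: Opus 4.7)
The plan is to make explicit the bistellar flip on $\partial\Omega_1$ that corresponds to the elementary shelling. Write $\Omega_2 = \Omega_1 \cup \overline{F}$, where $F$ is the new facet (so $|F|=d+1$), and let $r = r(F) \subseteq F$ denote the restriction face, so that the newly introduced faces are precisely those in $[r,F]$ and the faces of $\overline{F}$ already in $\Omega_1$ are those not containing $r$. Equivalently, writing $A := F \setminus r$, one has $\overline{F} \cap \Omega_1 = \overline{A} \ast \partial\overline{r}$, whose facets are $F \setminus \{v\}$ for $v \in r$. The natural candidate for the flip on the $(d-1)$-sphere $\partial\Omega_1$ is then the pair $(A,B) := (F\setminus r,\, r)$, which satisfies $A \cap B = \emptyset$ and $|A \cup B| = d+1$, the correct cardinality for a bistellar flip on a $(d-1)$-sphere.

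To verify the flip is admissible, I would check the three conditions in turn. First, $B = r \notin \partial\Omega_1$ because $r$ is the minimal face added by the shelling, so $r \notin \Omega_1$. Second, $A \in \partial\Omega_1$: each $F \setminus \{v\}$ with $v \in r$ is a $(d-1)$-face of $\Omega_1$, and if it lay in two facets of $\Omega_1$ then it would lie in three facets of $\Omega_2$, contradicting the manifold property of the ball $\Omega_2$; hence $F\setminus\{v\} \in \partial\Omega_1$, and so is its subface $A$. Third, $\lk_{\partial\Omega_1}(A) = \partial\overline{B}$: for every $T \in \partial\overline{r}$ and any $v \in r \setminus T$ one has $A \cup T \subseteq F \setminus \{v\} \in \partial\Omega_1$, giving the inclusion $\partial\overline{r} \subseteq \lk_{\partial\Omega_1}(A)$; since both sides are combinatorial $(|r|-2)$-spheres (the link of a face in a PL sphere is PL of the expected dimension), they must coincide.

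It then remains to check that the output of this flip is indeed $\partial\Omega_2$. The flip removes the facets $A \cup (r \setminus \{v\}) = F \setminus \{v\}$, $v \in r$, of $\overline{A} \ast \partial\overline{r}$, and inserts the facets $(A \setminus \{v\}) \cup r = F \setminus \{v\}$, $v \in A$, of $\partial\overline{A} \ast \overline{r}$, while leaving all other facets of $\partial\Omega_1$ unchanged. Passing from $\partial\Omega_1$ to $\partial\Omega_2$ affects only $(d-1)$-faces inside $\overline{F}$: those $F \setminus \{v\}$ with $v \in r$ now lie in two facets of $\Omega_2$ and thus leave the boundary, whereas those with $v \in A$ lie only in $\overline{F}$ among facets of $\Omega_2$ and therefore enter the boundary. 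These two sets of facets coincide exactly, and since a pure complex is determined by its facets, the two complexes agree. The main obstacle is the link computation together with the manifold-based argument that $\overline{F} \cap \Omega_1 \subseteq \partial\Omega_1$; once these are set up, the rest is routine bookkeeping.
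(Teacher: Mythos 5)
The paper states this lemma without proof, introducing it only as ``the following well-known connection between elementary shellings and bistellar flips,'' so there is no in-paper argument to compare against; the closest analogue is the more elaborate machinery of Lemmas~\ref{Aj-Bj-meet-join}--\ref{Aj-Bj-flip} and Proposition~\ref{prp:ShellCompos}, which proves a relative version of this fact in the pseudo-cobordism setting.

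Your proof is correct and is essentially the standard one. You identify the flip data as $(A,B)=(F\setminus r(F),\,r(F))$, verify the admissibility conditions --- using the manifold property of $\Omega_2$ to show that each $F\setminus\{v\}$ with $v\in r$ (and hence $A$) lies in $\partial\Omega_1$, and the fact that a combinatorial sphere contains no proper subcomplex that is a sphere of the same dimension to force $\lk_{\partial\Omega_1}(A)=\partial\overline{B}$ --- and then match, facet by facet, the effect of the flip on $\partial\Omega_1$ with the change from $\partial\Omega_1$ to $\partial\Omega_2$. One small point you pass over silently: the restriction face $r=r(F)$ is a \emph{proper, nonempty} face of $F$; nonemptiness follows from $F\notin\Omega_1$, and properness (equivalently $A\neq\emptyset$) from the hypothesis that $\Omega_2$ is a ball rather than a sphere. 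Both are needed for $(A,B)$ to be legitimate flip data, and both are available from the hypotheses, so this is an omission of emphasis rather than a gap.
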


Shellability also appears in the definition of cross-flips, which will be given in the next subsection.

%%%%%%%%
\subsection{Balanced simplicial complexes and cross-flips}
%%%%%%%%
We say that a simplicial complex $\Delta$ is \textit{properly $m$-colorable} if
there exists a map $\kappa: V(\Delta) \rightarrow \{0,1,\ldots,m-1\}$ such that
$\kappa(u) \neq \kappa(v)$ for each edge $\{u,v\} \in \Delta$.  Following Stanley
\cite{Stanley-79}, we say a $d$-dimensional simplicial complex is
\textit{balanced} if it is properly $(d+1)$-colorable.
The graph of the $d$-simplex cannot be properly colored with fewer than $d+1$ colors
because its graph is the complete graph on $d+1$ vertices.  Therefore, balanced
complexes are those complexes that can be colored with a minimum number of colors.

One example of a balanced $d$-sphere is the boundary complex of the
$(d+1)$-dimensional cross-polytope, which we denote as $\C_d$. Specifically, $V(\C_{d}) = \{x_0, x_1,
\ldots, x_d\} \cup \{y_0, y_1, \ldots, y_d\}$ and the faces of $\C_d$ are all
possible subsets $F \subset V(\C_{d})$ with the property that $|F \cap
\{x_i,y_i\}| \leq 1$ for all $i$. In particular, defining
$\kappa(x_i)=\kappa(y_i)=i$ for all $0\leq i \leq d$ provides a proper
$(d+1)$-coloring of $\C_{d}$. The complex $\C_{d}$ has the minimal
number of faces in each dimension among all balanced $d$-spheres.

Stellar subdivisions destroy the balanced property. A balanced stellar subdivision can be defined by removing a face $F$ and all faces containing it, inserting the Schlegel diagram of a cross-polytope of dimension $\dim F + 1$, and joining it with the link of $F$. The first row in Figure \ref{fig:CrossFlips2} shows the balanced subdivisions of a triangle and of an edge in a $2$-dimensional manifold.
Balanced stellar subdivisions were introduced by Fisk in \cite[Section II]{Fisk-77a} and \cite[Section VI.3]{Fisk-77b}, and rediscovered by Izmestiev and Joswig in \cite{Izmestiev-Joswig-03}.

It is natural to ask if a balanced analog of Theorem \ref{thm:Alexander} holds: \emph{can any two PL homeomorphic balanced simplicial complexes be connected by a sequence of balanced stellar subdivisions and their inverses?} %We do not know the answer but are inclined to believe that it is negative already in dimension~$2$.
Very recently, Murai and Suzuki \cite{Murai-Suzuki} showed that the answer is negative already in dimension~$2$.
In the non-balanced setting, Theorem \ref{homeomorphic-bistellar-equivalent} about bistellar moves is proved with the help of  Theorem \ref{thm:Alexander} about stellar moves. This makes it more surprising that there is a balanced analog of Theorem~\ref{homeomorphic-bistellar-equivalent}.

In the non-balanced setting a bistellar flip exchanges a $d$-ball in $\partial \sigma^{d+1}$ with its complement.  Several recent results of Klee and Novik \cite{Klee-Novik-14} and Juhnke-Kubitzke and Murai \cite{Juhnke-Kubitzke-Murai-15} have shown that substituting the boundary of a cross-polytope in place of the boundary of a simplex leads to balanced analogues of many classical results, such as the Lower Bound Theorem or the Generalized Lower Bound Conjecture. We use this as a motivation to define a \emph{cross-flip}.

A proper pure $d$-dimensional subcomplex $D \subset \C_d$ is called \emph{co-shellable}, if its complement $\C_d \setminus D$ is shellable (here $\C_{d} \setminus D$ denotes the pure subcomplex of $\C_{d}$ whose facets are precisely the facets of $\C_{d}$ not belonging to $D$). Note that $D\subset \C_d$ is shellable and co-shellable if and only if a shelling of $D$ extends to a shelling of $\C_d$. For instance, if $D$ is both shellable and co-shellable, then a shelling order of $D$ followed by the reverse of a shelling order of $\C_d \setminus D$ provides a shelling of $\C_d$ that extends that of $D$. Since $\C_d$ is not extendably shellable for $d \ge 11$ \cite{Hall-04}, the shellability of $D$ does not imply its co-shellability.

\begin{definition} \label{cross-flip-def}
Let $\Delta$ be a balanced combinatorial $d$-manifold and suppose $D \subset \Delta$ is an induced subcomplex of $\Delta$ that is isomorphic to a shellable and co-shellable subcomplex of $\C_d$.
We say that the process of replacing $D$ with $\C_{d} \setminus D$ is a \textit{cross-flip} on $\Delta$.
\end{definition}
Clearly, the inverse of a cross-flip is also a cross-flip.
The condition for $D$ being an induced subcomplex of $\Delta$ guarantees that replacing $D$ with its complement produces a simplicial complex. Since shellable simplicial balls are combinatorial, each cross-flip replaces a combinatorial $d$-ball $D$ with another combinatorial $d$-ball $D'$ such that $\partial D = \partial D'$. Thus cross-flips preserve the PL homeomorphism type of a complex.

\begin{figure}[ht]
\begin{center}
\includegraphics[width=.8\textwidth]{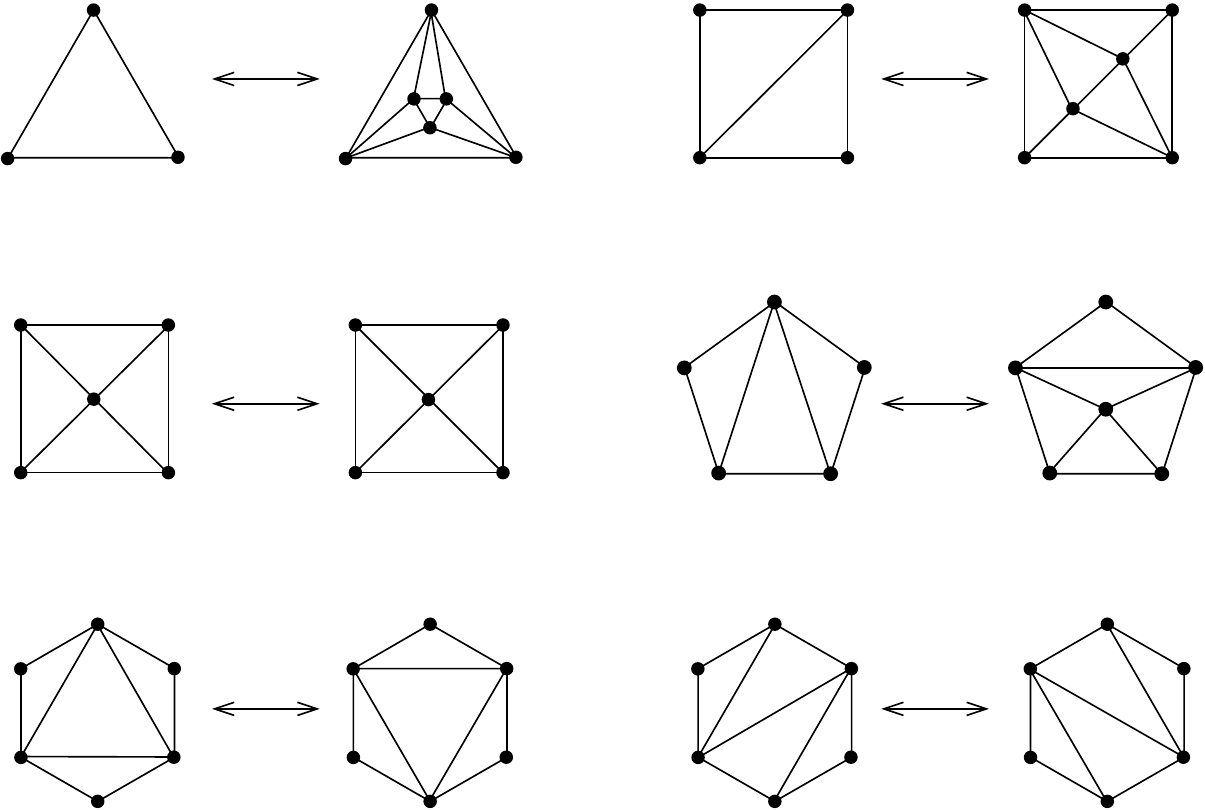}
\end{center}
\caption{Cross-flips in dimension $2$.}
\label{fig:CrossFlips2}
\end{figure}

\begin{example}
\label{exl:CrossFlips2}
Figure \ref{fig:CrossFlips2} shows all cross-flips on $2$-manifolds. Unlike the non-balanced situation, there is a trivial move, which we listed for completeness. In fact, the moves from the first row (balanced stellar subdivisions and welds) together with the pentagon move from the second row suffice to connect any two triangulations of a surface, see Remark \ref{rem:3Suffice}. %As we mentioned before, we do not know whether the moves from the first row suffice.
 More surprisingly, Murai and Suzuki \cite{Murai-Suzuki} very recently proved that only three out of these six moves suffice, namely, balanced edge subdivisions and welds, and pentagon contractions (i.e., the pentagon move from the second row of Figure \ref{fig:CrossFlips2} that replaces 5 facets with 3 facets).

\end{example}

%%%%%%%%%%%%%%%%%%%
%%%%%%%%%%%%%%%%%%%
\section{A warm up: balanced spheres}
%%%%%%%%%%%%%%%%%%%
%%%%%%%%%%%%%%%%%%%
In this section, after developing some tools, we prove our main results --- Theorem \ref{color-thm} and Theorem \ref{main-thm} --- in the special case of spheres.

%%%%%%%%
\subsection{Extending colorings through stellar subdivisions}
%%%%%%%%
A \emph{relative simplicial complex} is a pair $(L, K)$ where $K$ is a subcomplex of $L$. A \emph{facet} of $(L, K)$ is a face $\sigma \in L \setminus K$ that is not properly contained in any other faces of $L \setminus K$. The \emph{dimension} of a relative simplicial complex is the maximum dimension of its facets. Hence $\dim (L, K) \le \dim L$, and this inequality is strict if and only if all top-dimensional faces of $L$ belong to $K$.

The following result provides a strengthening of \cite[Lemma 57]{Fisk-77b} and \cite[Theorem 3]{Izmestiev-05} and seems to be interesting in its own right.

\begin{theorem}  \label{ext-coloring}
Let $(L, K)$ be a $d$-dimensional relative simplicial complex.
Assume $\kappa : V(K) \to \{0,1,\ldots, m-1\}$ is a proper $m$-coloring of $K$. Then
there is a stellar subdivision $L'$ of $L$ such that
\begin{enumerate}
\item $K$ is a subcomplex of $L'$ (that is, no face of $K$ was subdivided), and
\item the coloring $\kappa$ extends to a proper coloring $\kappa' : V(L') \to
\{0,1,\ldots, \max\{m-1,d\}\}$ such that all vertices not in $K$ receive colors in
$\{0, 1, \ldots, d\}$.
\end{enumerate}
\end{theorem}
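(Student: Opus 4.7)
My plan is to prove Theorem \ref{ext-coloring} by induction on a suitable measure of the relative complex $(L, K)$, with the base case $L = K$ being trivial (set $L' = L$ and $\kappa' = \kappa$). A candidate measure is a lexicographic pair such as (number of equi-colored pairs of $V(K)$-vertices lying in a common face of $L \setminus K$, total number of faces of $L \setminus K$).

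For the inductive step I would pick a facet $\sigma$ of the relative simplicial complex $(L, K)$. Because $K$ is a subcomplex of $L$, any face of $L$ properly containing $\sigma$ would force $\sigma \in K$---a contradiction---so $\sigma$ is in fact a facet of $L$ with $\lk_L(\sigma) = \emptyset$. Writing $A := V(\sigma) \cap V(K)$, the key dichotomy is whether $\kappa|_A$ is injective. In the \emph{injective case}, one has $|\kappa(A)| = |A|$, and the palette $\{0, \ldots, d\} \setminus \kappa(A)$ contains at least $d + 1 - |A| = |V(\sigma) \setminus V(K)|$ distinct colors; assigning them to the vertices of $V(\sigma) \setminus V(K)$ extends $\kappa$ to a proper coloring of $K \cup \overline{\sigma}$, to which the inductive hypothesis applies. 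In the \emph{non-injective case}, there exist $a_1, a_2 \in A$ with $\kappa(a_1) = \kappa(a_2)$; since $K$ is properly colored, the edge $\{a_1, a_2\}$ must lie in $L \setminus K$. My approach here is to stellarly subdivide an appropriate face of $L \setminus K$ containing $\{a_1, a_2\}$---typically the edge itself, introducing a new vertex $v$ whose color in $\{0, \ldots, d\}$ is chosen to avoid the colors of its $K$-neighbors. If the edge's link in $L$ is too large to rule out this direct approach (which can happen when the $K$-vertices of the link already exhaust $\{0, \ldots, d\}\setminus\{\kappa(a_1)\}$), I would first perform preparatory subdivisions of higher-dimensional faces containing $\{a_1, a_2\}$ to reduce the link and gain flexibility, then subdivide the edge.

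The principal obstacle will be selecting an inductive measure that strictly decreases at every step, despite the injective case potentially creating new equi-colored pairs between newly colored vertices of $V(\sigma) \setminus V(K)$ and pre-existing vertices of $V(K) \setminus V(\sigma)$, and the non-injective case adding new faces through stellar subdivision. A lexicographic measure with an appropriate weighting should overcome this, but verifying strict decrease requires a detailed audit of how the newly chosen colors interact with the rest of $L$ and how preparatory subdivisions cascade. A secondary subtlety is that the color chosen for $v$ must lie in the restricted palette $\{0, \ldots, d\}$, not the larger $\{0, \ldots, \max(m-1, d)\}$; this constraint is exactly what can force the preparatory subdivisions in some configurations, so a careful choice of which face to subdivide---based on the size and composition of its link---is essential.
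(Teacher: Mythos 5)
Your approach departs substantially from the paper's, and—as you yourself flag—it hinges on finding a measure that strictly decreases at every step; but the measure you propose does not decrease in the injective branch, and you do not supply a replacement that would.

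The paper's proof works globally rather than one facet at a time. After a preliminary pass subdividing every edge of $L\setminus K$ whose endpoints are equi-colored $K$-vertices (this is the only place your ``non-injective'' observation is needed), it sets $\kappa'(u)=0$ for every $u\in V(L)\setminus V(K)$. It then calls a face $F$ \emph{dull} if $\kappa'(v)<\dim F$ for every $v\in F$, notes that every improperly colored edge is dull (both endpoints must carry color $0$), and repeatedly stellarly subdivides an inclusion-maximal dull face $F$, coloring the new apex with $\dim F$. A short local argument shows each such step creates no new dull faces and no new improperly colored edges, so the number of dull faces strictly decreases to zero, at which point the coloring is proper. The dull-face count is the invariant that makes termination and correctness immediate.

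Your proposal lacks such an invariant. The non-injective branch is actually fine: stellarly subdividing the edge $\{a_1,a_2\}$ removes every face of $L$ containing both $a_1$ and $a_2$ and creates no face supported on old vertices, so the new vertex contributes nothing to your first lexicographic component and that component strictly drops. The gap is in the injective branch. When you enlarge $K$ to $K\cup\overline{\sigma}$ and color $V(\sigma)\setminus V(K)$, a newly colored vertex $v$ may lie in a face of $L\setminus (K\cup\overline{\sigma})$ together with a pre-existing $K$-vertex $w\notin\sigma$ of the same color; then $\{v,w\}$ is a fresh equi-colored $V(K')$-pair, and your first component goes \emph{up} while only the second goes down, so the lexicographic pair does not decrease. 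The injective and non-injective branches can then alternate, and nothing in your proposal bounds how long this oscillation can continue. Writing ``an appropriate weighting should overcome this'' does not close the gap; there really is an obstruction to worry about, since the $1$-skeleton of $L$ restricted to the uncolored vertices may have chromatic number exceeding $d+1$, so a priori unbounded further subdivision may be required, and the whole content of the theorem is controlling that process. The dull-face invariant (or some equivalent globally decreasing quantity) is the missing idea.

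Two small remarks: your observation that a facet of $(L,K)$ is a facet of $L$ is correct (and your palette count $|\{0,\dots,d\}\setminus\kappa(A)|\ge d+1-|A|$ is fine even when $\kappa$ uses colors $\ge d+1$), so the local pieces of your argument are sound; and your instinct that ``preparatory subdivisions'' can shrink the colored part of a link is also the right kind of idea—it is precisely what the paper's inclusion-maximal dull face subdivisions accomplish, just organized by an invariant that provably decreases.
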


Note that the number $m$ might be larger than $\dim K + 1$. Also, the coloring $\kappa$ is assumed to be proper only with respect to $K$: there might be an edge in $L \setminus K$ whose endpoints belong to $K$ and are colored by $\kappa$ in the same color. Naturally, such an edge has to be subdivided in the process of transforming $L$ into $L'$.

\begin{proof}
We may assume without loss of generality that there are no edges in $L \setminus K$ whose endpoints are vertices in $K$ that receive the same color under $\kappa$. If any such edges do exist, we begin by stellarly subdividing them.

Define an initial coloring $\kappa'$ on $L$ by setting $\kappa'(u) = 0$ for each $u \in V(L) \setminus V(K)$ and $\kappa'(v) = \kappa(v)$ for all $v \in V(K)$. If the resulting coloring is proper, we are done. If not, then we proceed by performing special stellar subdivisions on $L$ and extending the coloring $\kappa'$ to the newly created vertices in a special way.

Call a face $F \in L$ \emph{dull}, if $\kappa(v) < \dim(F)$ for each $v \in F$. Note that every dull simplex is improperly colored, but not every improperly colored simplex is dull. However, any $\kappa'$-improperly colored edge of $L$ is dull. Indeed, by our initial assumption that no improperly colored edges in $L$ have both vertices in $K$, any improperly colored edge must have both of its vertices receive color $0$.

As a basic step, perform a stellar subdivision at an inclusion-maximal dull face
$F$, and color the newly introduced vertex $a$ in the color $\dim F$. Since there are no dull faces in $K$, we have $\dim F \le d$. We claim
that
\begin{enumerate}
\item[(i)] no faces created by a basic starring are dull, and
\item[(ii)] all edges created by a basic starring are properly colored.
\end{enumerate}
Assuming this claim, it follows that each basic stellar subdivision decreases the number of
dull faces and does not create any new improperly colored edges. Therefore we may repeat this until there are no more dull simplices. The resulting complex is properly
colored, since any improperly colored edge would be dull.  Further, we never subdivide any face of $K$, since these faces are colored properly.

Therefore, to complete the proof, it suffices to verify parts (i) and (ii) of the above claim. For property (i), note that after a basic starring at $F$,  every new face has the form
\[
\{a\} \cup F' \cup G, \quad \mbox{where } F' \subsetneq F \mbox{ and } G\in
\lk(F).
\]
If $G$ is empty, then such a new face is not dull because its dimension is at most $\dim F$ and it contains a vertex, $a$, of color $\dim F$. Thus assume that $\dim G = \ell \ge 0$, that $\dim F' = k' < k$ where $k=\dim F$, and that $\{a\} \cup F' \cup G$ is dull, that is, the colors of the vertices of this face are in $\{0, 1, \ldots, k'+\ell+1\}$. Then the face $F \cup G$ (before this basic starring) was dull: we have $\dim (F \cup G) = k+\ell+1 \ge k'+\ell+2$, and all the colors used are contained in $\{0, 1, \ldots, k+\ell+1\}$. But if $F\cup G$ was dull, then $F$ was not an inclusion-maximal dull face, which is a contradiction. 

Finally, to verify (ii), observe that the edges joining $a$ with vertices of $F$ are properly colored because $F$ is dull while the color of $a$ is $\dim F$. Also, every edge $\{a, b\}$ with $b$ in $\lk(F)$ is properly colored, since otherwise $\{b\} \cup F$ is dull, and hence $F$ is not an inclusion-maximal dull face. The result follows.
\end{proof}

The following result is immediate based on Theorem \ref{ext-coloring}.
\begin{corollary} \label{balanced->balanced}
Let $K$ be a subcomplex of $L$, and assume further that $K$ is balanced. Then there exists a stellar subdivision $L'$ of $L$ such that (i) $K$ is a subcomplex of $L'$, (ii) $L'$ is balanced, and (iii) the balanced coloring of $L'$ extends that of $K$.
\end{corollary}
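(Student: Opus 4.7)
The plan is to simply apply Theorem \ref{ext-coloring} to $(L,K)$ with the balanced coloring of $K$ as the input coloring $\kappa$ and argue that the color set used is no larger than $\{0,1,\ldots,\dim L\}$, which is exactly what balancedness of $L'$ requires.

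More precisely, let $k = \dim K$, so that $K$ admits a proper $(k+1)$-coloring $\kappa : V(K) \to \{0,1,\ldots,k\}$. Let $d = \dim(L,K)$ be the relative dimension of the pair. I would apply Theorem \ref{ext-coloring} with this $\kappa$ and $m = k+1$ to obtain a stellar subdivision $L'$ of $L$ such that $K$ is a subcomplex of $L'$ (so condition (i) is immediate) and $\kappa$ extends to a proper coloring $\kappa' : V(L') \to \{0,1,\ldots,\max\{k,d\}\}$ (with new vertices receiving colors in $\{0,1,\ldots,d\}$). This takes care of condition (iii) automatically, since $\kappa'$ restricts to $\kappa$ on $V(K)$.

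The only thing left to check is condition (ii), namely that $\max\{k,d\}+1 \le \dim(L')+1$. Since stellar subdivision preserves dimension, $\dim L' = \dim L$. On the other hand, every facet of $L$ either lies in $K$ (contributing at most $k$ to $\dim L$) or is a facet of the relative complex $(L,K)$ (contributing at most $d$), so $\dim L = \max\{k,d\}$. Hence $\kappa'$ uses at most $\dim L' + 1$ colors and is a balanced coloring of $L'$.

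There is no real obstacle here; the statement is essentially a repackaging of Theorem \ref{ext-coloring} in the case where the input coloring of $K$ already uses the minimum possible number of colors. The only subtle point is matching the bound $\max\{m-1,d\}$ from Theorem \ref{ext-coloring} with $\dim L$, which follows from the observation $\dim L = \max\{\dim K, \dim(L,K)\}$.
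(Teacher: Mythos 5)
Your proposal is correct and follows the same approach as the paper: apply Theorem \ref{ext-coloring} with $m = \dim K + 1$ and match the resulting color bound $\max\{m-1,d\}$ against $\dim L' = \dim L = \max\{\dim K, \dim(L,K)\}$. The only stylistic difference is that the paper splits into the two cases $d < \dim K$ and $d \ge \dim K$, whereas you treat both uniformly via the single identity $\dim L = \max\{\dim K, \dim(L,K)\}$; the underlying argument is identical.
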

\begin{proof}
Assume that $K$ is colored in $m = \dim K + 1$ colors. It suffices to consider two cases.

If $d = \dim(L, K) < \dim K = m-1$, then there exists a subdivision $L'$ colored in $m$ colors. At the same time $\dim L' = \dim L = \dim K = m$, so that $L'$ is balanced.

If $d = \dim(L, K) \ge \dim K = m-1$, then there exists a subdivision $L'$ colored in $d+1$ colors. At the same time $\dim L' = \dim(L', K) = d$, so that $L'$ is balanced.
\end{proof}

\begin{remark}
As suggested in \cite[Section 2.13]{Gromov-10}, Theorem \ref{ext-coloring} can be interpreted topologically. Assume that the coloring of $K$ uses $m \geq d+1$ colors. Let $X \supset K$ be the pure $(m-1)$-dimensional simplicial complex with the same vertex set as $K$, where the top-dimensional faces of $X$ are all $m$-subsets of $V(K)$ formed by points of different colors. The complex $X$ is $(m-2)$-connected --- a fact that is widely used in topological combinatorics \cite{Bjorner-95}. Since $\dim(L,K) = d\leq m-1$, the inclusion $|K| \subset |X|$ can be extended to a map $|L| \to |X|$.
In fact, Theorem \ref{ext-coloring} tells more: there is an extension by a non-degenerate piecewise-linear map.

If $m < d+1$, then add to $V(K)$ one vertex for each missing color and apply the same argument.
\end{remark}

\subsection{Connecting colored spheres}
We are now ready to provide the proof of Theorem \ref{color-thm} for the case of
spheres.

\begin{theorem}
\label{thm:SphereSimplex}
Let $m \ge d+2$. Then for every properly $m$-colored combinatorial $d$-sphere $\Delta$ there is a sequence of bistellar flips that transforms $\Delta$ into the boundary of a $(d+1)$-simplex such that each intermediate complex is properly $m$-colored and the flips preserve the vertex colors.
\end{theorem}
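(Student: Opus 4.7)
The plan is to fill $\Delta$ with a shellable $(d+1)$-ball carrying a proper $m$-coloring that extends the coloring of $\Delta$, and then to reverse a shelling of this ball, reading off the induced bistellar flips on the boundary.

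First, by Theorem~\ref{shellable-ball} there is a shellable $(d+1)$-ball $\Omega$ with $\partial \Omega = \Delta$ and $\Delta$ an induced subcomplex of $\Omega$. Apply Theorem~\ref{ext-coloring} to the pair $(\Omega, \Delta)$ with the given coloring $\kappa:V(\Delta) \to \{0,\ldots,m-1\}$: since $\dim(\Omega,\Delta)\le d+1$ and $m\ge d+2$, we have $\max\{m-1,\dim(\Omega,\Delta)\}\le m-1$, so the theorem produces a stellar subdivision $\Omega'$ of $\Omega$ in which no face of $\Delta$ has been subdivided, together with a proper $m$-coloring $\kappa'$ of $\Omega'$ extending $\kappa$. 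Iterating Theorem~\ref{subdiv-shellability} shows that $\Omega'$ remains a shellable $(d+1)$-ball, while $\partial \Omega' = \Delta$ carries its original coloring.

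Now fix a shelling $F_0, F_1, \ldots, F_t$ of $\Omega'$ and set $\Omega'_j := \bigcup_{i\le j}\overline{F}_i$, so that each $\Omega'_j$ is a shellable $(d+1)$-ball, $\Omega'_0$ is a single $(d+1)$-simplex, and $\Omega'_t = \Omega'$. The step from $\Omega'_j$ back to $\Omega'_{j-1}$ is an elementary de-shelling, so by Lemma~\ref{lem:ShellToFlip} it induces a single bistellar flip from $\partial \Omega'_j$ to $\partial \Omega'_{j-1}$. Reading these flips from $j=t$ down to $j=1$ transforms $\Delta=\partial \Omega'_t$ into $\partial \sigma^{d+1}=\partial \Omega'_0$. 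Since each intermediate complex $\partial \Omega'_j$ is a subcomplex of $\Omega'$, it inherits the proper $m$-coloring $\kappa'$; in particular, every vertex newly inserted into the boundary by a flip already carries a fixed color from $\kappa'$, so all flips preserve vertex colors.

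The only delicate point is the second step: we need a shellable filling of $\Delta$ whose coloring can be extended from the boundary without subdividing $\Delta$ and without destroying shellability. This is exactly what Theorem~\ref{ext-coloring} delivers (its proof only subdivides inclusion-maximal dull faces, and $\Delta$, being properly colored, contains no dull faces), combined with Theorem~\ref{subdiv-shellability} to retain shellability through the iterated stellar subdivisions. The hypothesis $m\ge d+2$ enters at precisely this step: it provides enough colors to accommodate the new interior vertices of $\Omega'$, which have dimension at most $d+1$.
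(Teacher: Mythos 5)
Your proof is correct and follows essentially the same path as the paper: fill $\Delta$ with a shellable $(d+1)$-ball via Theorem~\ref{shellable-ball}, extend the coloring using Theorem~\ref{ext-coloring} (noting $m\ge d+2$ gives enough colors and keeps $\Delta$ unsubdivided), retain shellability via Theorem~\ref{subdiv-shellability}, and read the shelling backwards as bistellar flips via Lemma~\ref{lem:ShellToFlip}. The only difference is purely expository — you spell out the intermediate balls $\Omega'_j$ and the color-inheritance argument more explicitly, but the argument is the same.
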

\begin{proof}
By Theorem \ref{shellable-ball}, there exists a shellable $(d+1)$-ball $\Omega$ whose boundary is $\Delta$. Applying Theorem \ref{ext-coloring} to the pair $(\Omega, \Delta)$ produces a properly $m$-colored $(d+1)$-ball $\Omega'$ such that $\partial\Omega'=\Delta$ and the coloring of $\Omega'$ extends that of $\Delta$. (Note that $\dim(\Omega,\Delta) = d+1$, and  $\max\{m-1, d+1\} = m-1$ because $m \geq d+2$.)

Since $\Omega'$ is a stellar subdivision of a shellable ball, it is shellable by Theorem \ref{subdiv-shellability}. By Lemma \ref{lem:ShellToFlip} every shelling order on $\Omega'$ defines a sequence of bistellar flips that transforms the boundary of a $(d+1)$-simplex to the boundary of $\Omega'$. Since $\Omega'$ is properly $m$-colored, the intermediate complexes are $m$-colored and the flips preserve the colors of the vertices.
\end{proof}

\begin{corollary}
Let $m \geq d+2$ and suppose $\Delta$ and $\Gamma$ are properly $m$-colored combinatorial $d$-spheres.  Then there is a sequence of bistellar flips that transforms $\Delta$ into $\Gamma$ and a given coloring of $\Delta$ into a given coloring of $\Gamma$ in such a way that each intermediate complex is properly $m$-colored and the flips preserve the colors of the vertices.
\end{corollary}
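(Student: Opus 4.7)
The plan is to produce the desired sequence as the concatenation of three color-preserving bistellar subsequences: one contracting $(\Delta, \kappa_\Delta)$ down to a colored simplex boundary, one bridging between two colored simplex boundaries, and one expanding back up to $(\Gamma, \kappa_\Gamma)$.

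For the outer two pieces, I would apply Theorem~\ref{thm:SphereSimplex} to $(\Delta, \kappa_\Delta)$ and to $(\Gamma, \kappa_\Gamma)$ separately, obtaining color-preserving sequences through properly $m$-colored $d$-spheres ending at $(\partial\tau_1, \mu_1)$ and $(\partial\tau_2, \mu_2)$, respectively, where each $\tau_i$ is a $(d+1)$-simplex and $\mu_i$ is the induced proper $m$-coloring of its boundary. Reversing the second sequence produces a color-preserving sequence from $(\partial\tau_2, \mu_2)$ up to $(\Gamma, \kappa_\Gamma)$. By renaming, if necessary, the vertices introduced along the $\Gamma$-side sequence under fresh labels, I may further assume that $V(\tau_1) \cap V(\tau_2) = \emptyset$. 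What remains is to produce a color-preserving bistellar bridge from $(\partial\tau_1, \mu_1)$ to $(\partial\tau_2, \mu_2)$ through properly $m$-colored $d$-spheres.

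To build this bridge, observe that both $\mu_1$ and $\mu_2$ use $d+2$ distinct colors from $\{0, \ldots, m-1\}$ on their $d+2$ pairwise-adjacent vertices, and since $m \ge d+2$, at least one color is missing from any fixed facet. My plan is to introduce the vertices of $V(\tau_2)$ into the complex one at a time by $(d+1)$-to-$1$ stellar subdivisions, each time choosing a facet whose $d+1$ vertex colors avoid the target color $\mu_2(w)$ of the newly inserted vertex $w$; and then to remove the vertices of $V(\tau_1)$ one at a time by inverse $1$-to-$(d+1)$ welds, arranging at each step that the link of the vertex being removed is the boundary of a $d$-simplex on vertices of $V(\tau_2)$ not yet present and whose closure is properly colored.

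The main obstacle is choreographing this bridge so that the requisite link structure exists at every weld and no intermediate edge becomes miscolored. A more conceptual alternative, which offloads the bookkeeping onto machinery already at our disposal, would be to construct a shellable $(d+1)$-ball $\Omega$ with $\partial\Omega = \partial\tau_1$ that contains $\tau_2$ as a facet, extend $\mu_1$ to a proper $m$-coloring of a subdivision $\Omega'$ of $\Omega$ that agrees with $\mu_2$ on $V(\tau_2)$ via Theorem~\ref{ext-coloring}, and choose a shelling of $\Omega'$ beginning with $\tau_2$; Theorem~\ref{subdiv-shellability} and Lemma~\ref{lem:ShellToFlip} then convert this shelling into the desired color-preserving bridge, completing the proof.
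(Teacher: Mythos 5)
Your high-level decomposition — apply Theorem~\ref{thm:SphereSimplex} to each of $\Delta$ and $\Gamma$ to reach colored copies $(\partial\tau_1,\mu_1)$ and $(\partial\tau_2,\mu_2)$ of $\partial\sigma^{d+1}$, and then build a color-preserving bridge between them — is exactly the paper's first step. The divergence, and the gap, is in the bridge. The paper's bridge is a very short explicit recipe: if the current coloring of $\partial\sigma^{d+1}$ uses a color $k$ outside the target set of $d+2$ colors, stellarly subdivide the facet opposite the vertex of color $k$, color the new vertex with an unused target color $\ell$, and then weld away the vertex of color $k$; repeating this pair of flips normalizes the coloring, and two proper colorings of $\partial\sigma^{d+1}$ using the same $(d+2)$-element color set are related by an automorphism.

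Your version~(a) of the bridge (introduce all of $V(\tau_2)$ by facet subdivisions, then weld away all of $V(\tau_1)$) is underspecified, as you acknowledge: after several facet subdivisions the links of the $u_i\in V(\tau_1)$ will generally not be boundaries of simplices, so there is no reason the required welds are available, and you do not supply the ``choreography.'' Your version~(b) has a concrete unjustified step: you need a shelling of $\Omega'$ that \emph{begins} with $\tau_2$. A shellable ball need not admit a shelling starting at an arbitrarily prescribed facet, and neither Theorem~\ref{subdiv-shellability} nor Theorem~\ref{shellable-ball} asserts this. The step could be repaired — for instance by constructing $\Omega$ so that it visibly carries a shelling with $\tau_2$ first (e.g.\ by iterated stellar subdivisions along a flag inside $\overline{\tau_1}$), and then checking that the Provan--Billera refinement of a shelling keeps the unsubdivided facet $\tau_2$ at the front — but none of this is in the proposal, and the paper's one-color-at-a-time swap avoids the issue entirely. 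So while the framing matches the paper, the bridge as written contains a genuine gap.
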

\begin{proof}
By Theorem \ref{thm:SphereSimplex}, $\Delta$ and $\Gamma$ can each be transformed to a copy of the boundary of the $(d+1)$-simplex whose vertices are properly $m$ colored (though these two copies may not be colored by the same set of $d+2 \leq m$ colors). Therefore it suffices to show that any two proper $m$-colorings of $\partial \sigma^{d+1}$ are related through a sequence bistellar flips. More specifically, it suffices to transform any proper $m$-coloring to the one using the colors $\{0, 1, \ldots, d\}$. This is done as follows. Assume a coloring uses a color $k > d$ and does not use a color $\ell \le d$. Subdivide the facet opposite to the vertex of color $k$, coloring the newly introduced vertex with $\ell$. Then remove the vertex of color $k$ with a stellar weld to get a new coloring of $\partial \sigma^{d+1}$ that uses $\ell$ instead of $k$. Repeating this procedure reduces the set of colors to $\{0, 1, \ldots, d\}$.
\end{proof}

When $m = d+1$ the argument in the proof of Theorem \ref{thm:SphereSimplex} breaks down.  A $(d+1)$-coloring of $\Delta$ cannot be extended to a $(d+1)$-coloring of a ball $\Omega'$ because a proper coloring of $\Omega'$ requires at least $d+2$ colors. The best that we can have is a $(d+2)$-colored ball $\Omega'$. In the next section we show how to transform $\Omega'$ into a $(d+1)$-colored cross-polytopal complex.

%%%%%%%%
\subsection{The diamond operation: turning simplices into cross-polytopes}
\label{sec:Diamond}
The following lemma provides a systematic way of embedding the boundary complex of
the $(d+1)$-dimensional cross-polytope in the boundary of a $(d+1)$-simplex and
is one of the key ideas needed for our proofs.
%%%%%%%%

\begin{lemma} \label{lem:StarToCross}
Let $F_1 \subset F_2 \subset \cdots \subset F_d$ be a flag of faces in $\partial \sigma^{d+1}$ such that $\dim(F_i) = i$ for all $i$.  The successive stellar subdivision of these faces produces the boundary complex of a cross-polytope: 
\[
\sd_{F_1} \circ \sd_{F_2} \circ \cdots \circ \sd_{F_d}(\partial \sigma^{d+1})
\cong \C_{d}.
\]
\end{lemma}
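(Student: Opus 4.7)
The plan is to proceed by induction on $d$. For the base case $d=1$, the complex $\partial\sigma^2$ is the boundary of a triangle, and subdividing any edge yields a $4$-cycle, which is precisely $\C_1$. For the inductive step, fix coordinates so that $\sigma^{d+1}$ has vertices $v_0,\ldots,v_{d+1}$ with $F_i=\{v_0,\ldots,v_i\}$, and assume the statement holds in dimension $d-1$.

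The crucial first step is the rightmost (hence first-performed) subdivision $\sd_{F_d}$. Since $F_d$ is a facet of $\partial\sigma^{d+1}$, I would decompose
\[
\partial\sigma^{d+1} \;=\; \bigl(\overline{v_{d+1}}\ast\partial\overline{F_d}\bigr)\,\cup\,\overline{F_d},
\]
and observe that introducing the new vertex $a_d$ replaces the closed simplex $\overline{F_d}$ by the cone $\overline{a_d}\ast\partial\overline{F_d}$. Since the join distributes over unions,
\[
\sd_{F_d}(\partial\sigma^{d+1}) \;=\; \bigl(\overline{v_{d+1}}\cup\overline{a_d}\bigr)\ast\partial\overline{F_d}.
\]
The first factor consists of two isolated vertices and is therefore isomorphic to $\C_0$.

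Next I would record the following join-compatibility identity: if $X$ and $Y$ are simplicial complexes on disjoint vertex sets and $F$ is a face of $X$, then $\sd_F(X\ast Y) = \sd_F(X)\ast Y$. This is an immediate consequence of $\lk_{X\ast Y}(F)=\lk_X(F)\ast Y$ and $(X\ast Y)\setminus F = (X\setminus F)\ast Y$ combined with the definition of stellar subdivision. Each face $F_i$ with $i<d$ is a proper subface of $F_d$, hence lies in $\partial\overline{F_d}$, so iterated application of the identity gives
\[
\sd_{F_1}\circ\cdots\circ\sd_{F_{d-1}}\bigl(\C_0\ast\partial\overline{F_d}\bigr) \;=\; \C_0\ast\bigl(\sd_{F_1}\circ\cdots\circ\sd_{F_{d-1}}(\partial\overline{F_d})\bigr).
\]

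Finally, $F_1\subset\cdots\subset F_{d-1}$ is a flag of faces of $\partial\overline{F_d}\cong\partial\sigma^d$ with $\dim F_i=i$, so by the inductive hypothesis the inner complex is isomorphic to $\C_{d-1}$. The total composition therefore equals $\C_0\ast\C_{d-1}\cong\C_d$, completing the induction. The only mildly nontrivial ingredient is the join-compatibility identity; everything else is straightforward bookkeeping, and as a bonus the recursive structure exhibits the antipodal pairing of the resulting cross-polytope intrinsically.
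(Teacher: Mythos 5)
Your proof is correct and follows essentially the same route as the paper: compute the first-performed subdivision $\sd_{F_d}$ to peel off a join factor isomorphic to $\C_0$, then use the join-compatibility identity $\sd_F(X\ast Y)=\sd_F(X)\ast Y$ to push the remaining subdivisions into the $\partial\overline{F_d}\cong\partial\sigma^d$ factor. The only cosmetic difference is that the paper phrases the argument as a backwards induction on the index $k$ of a slightly more general formula $\sd_{F_k}\circ\cdots\circ\sd_{F_d}(\partial\sigma^n)\cong\partial\overline{F_k}\ast\C_{d-k-1}\ast\partial\sigma^{n-d}$, whereas you run an ordinary induction on $d$ and invoke the hypothesis for $\partial\overline{F_d}$; these are two ways of packaging the same recursion.
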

\begin{proof}
Let us prove a more general statement that for any $n$ and $1\leq k \leq d< n$,
\[
\sd_{F_k} \circ \sd_{F_{k+1}} \circ \cdots \circ \sd_{F_d}(\partial
\sigma^n) \cong \partial \overline{F_k} * \C_{d-k-1} * \partial \sigma^{n-d}
\]
(with $\C_{-1} = \{\emptyset\}$).

Note that if $F \in \partial \sigma^n$, then 
\begin{equation} \label{identity1}
\sd_F(\partial \sigma^n) = \partial \overline{F}*\partial \sigma^{n-\dim(F)}.
\end{equation}  Similarly, if $G \in \Gamma$, then 
\begin{equation} \label{identity2}
\sd_G(\Gamma*\Delta) = \sd_G(\Gamma)*\Delta.
\end{equation}

We prove the claim by backwards induction on $k$.  When $k=d$,  observation \eqref{identity1} establishes the base case of the induction.  For $k<d$,  observation \eqref{identity2} establishes the inductive step.
\end{proof}

\begin{example}   \label{ex:StarToCross}
Consider the boundary of the 3-simplex on vertex set $\{x_0, x_1, x_2, x_3\}$
whose Schlegel diagram is shown in Figure \ref{simplex-fig}, left.  Let
$F_1 = \{x_2,x_3\}$ and $F_2 = \{x_1, x_2, x_3\}$.  The Schlegel diagram of the
subdivision $\sd_{F_1} \circ \sd_{F_2}(\partial \sigma^3)$ is shown in the middle
of Figure \ref{simplex-fig}, and its realization as the Schlegel diagram of the
octahedron is shown on the far right of Figure \ref{simplex-fig}.
\end{example}

\begin{figure}[ht]
\begin{center}
\begin{tikzpicture}
\draw[fill=black] (0,0) circle (.1);
\draw[fill=black] (3,0) circle (.1);
\draw[fill=black] (1.5,2.6) circle (.1);
\draw[fill=black] (1.5,1) circle (.1);
\draw (0,0) -- (3,0) -- (1.5,2.6) -- (1.5,1) -- (0,0);
\draw (0,0) -- (1.5,2.6);
\draw (1.5,1) -- (3,0);
\draw (1.5,.75) node {$x_3$};
\draw (1.5,2.9) node {$x_2$};
\draw (-.25,-.25) node {$x_1$};
\draw (3.25,-.25) node {$x_0$};
\end{tikzpicture}
\qquad
\begin{tikzpicture}
\draw[fill=black] (0,0) circle (.1);
\draw[fill=black] (3,0) circle (.1);
\draw[fill=black] (1.5,2.6) circle (.1);
\draw[fill=black] (1.5,1) circle (.1);
\draw (0,0) -- (3,0) -- (1.5,2.6) -- (1.5,1) -- (0,0);
\draw (0,0) -- (1.5,2.6);
\draw (1.5,1) -- (3,0);
\draw (1.5,.75) node {$x_3$};
\draw (1.5,2.9) node {$x_2$};
\draw (-.25,-.25) node {$x_1$};
\draw (3.25,-.25) node {$x_0$};
\draw (1.1,1.25) -- (0,0);
\draw (1.1,1.25) -- (1.5,2.6);
\draw (1.1,1.25) -- (1.5,1);
\draw (1.1,1.25) -- (1.5,1.8);
\draw (1.5,1.8) -- (3,0);
\draw[fill=black] (1.1,1.25) circle (.1);
\draw (1.1,1) node {\footnotesize{$y_0$}};
\draw[fill=black] (1.5,1.8) circle (.1);
\draw (1.8,1.8) node {\footnotesize{$y_1$}};
\end{tikzpicture}
\qquad
\begin{tikzpicture}
\draw[fill=black] (0,0) circle (.1);
\draw[fill=black] (3,0) circle (.1);
\draw[fill=black] (1.5,2.6) circle (.1);
\draw[fill=black] (1.5,.75) circle (.1);
\draw (0,0) -- (3,0) -- (1.5,.75) -- (0,0);
\draw (1.5,2.6) -- (3,0);
\draw (0,0) -- (1.5,2.6);
\draw (1.5,.75) -- (3,0);
\draw (1.5,.5) node {$y_2$};
\draw (1.5,2.9) node {$x_2$};
\draw (-.25,-.25) node {$x_1$};
\draw (3.25,-.25) node {$x_0$};
\draw (1.1,1.25) -- (0,0);
\draw (1.1,1.25) -- (1.5,2.6);
\draw (1.1,1.25) -- (1.5,.75);
\draw (1.1,1.25) -- (1.9,1.25);
\draw (1.9,1.25) -- (1.5,2.6);
\draw (1.9,1.25) -- (3,0);
\draw (1.9,1.25) -- (1.5,.75);
\draw[fill=black] (1.1,1.25) circle (.1);
\draw (1.1,.9) node {\footnotesize{$y_0$}};
\draw[fill=black] (1.9,1.25) circle (.1);
\draw (1.9,.9) node {\footnotesize{$y_1$}};
\end{tikzpicture}

\end{center}
\caption{Drawing $\C_2$ in $\partial \sigma^3$ by subdividing a flag of faces.}
\label{simplex-fig}
\end{figure}
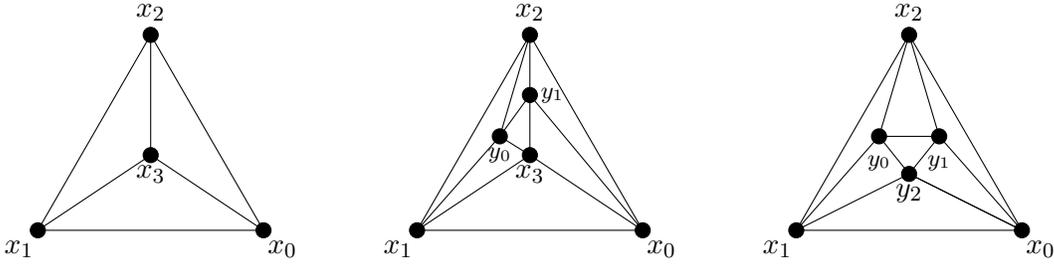

Instead of only working with simplicial complexes, it will be sometimes more
convenient for us to work with cross-polytopal complexes.

\begin{definition} \label{def:cross-poly-compl}
A $(d+1)$-dimensional regular CW-complex $\mathcal{X}$ is a \textit{cross-polytopal complex} if the $d$-dimensional skeleton of $\mathcal{X}$ is a simplicial complex, and the boundary of each $(d+1)$-dimensional cell is isomorphic to $\C_d$. 
\end{definition}

Lemma \ref{lem:StarToCross} allows us to turn any pure balanced simplicial complex into a \emph{cross-polytopal complex}. Indeed, let $\Delta$ be a pure $(d+1)$-dimensional balanced simplicial complex endowed with a coloring $\kappa: V(\Delta)\to\{0,1,\ldots, d+1\}$, and let $\{G_1,\ldots, G_m\}$ be the set of facets of $\Delta$. For each $i$, consider the flag of faces $F^i_1\subset F^i_2\subset \cdots\subset F^i_d$ in $\partial\overline{G_i}$, where $F^i_j$ is the unique face of $G_i$ with $\kappa(F^i_j)=\{d+1-j,\ldots,d+1\}$, and replace $\partial\overline{G_i}$ with the subdivision of $\partial\overline{G_i}$ with respect to ~this flag as explained in Lemma \ref{lem:StarToCross}.

To see that the above construction is consistent, observe that one can alternatively describe it as follows: in the $d$-skeleton of $\Delta$, perform a stellar subdivision at each $d$-face that does not contain color $0$; then perform a stellar subdivision (in the $d$-skeleton) at each $(d-1)$-face that does not contain colors $0$ or  $1$. Continue descending by dimension in this way so that finally we subdivide all edges of $\Delta$ that do not contain colors $0,1,\ldots, d-1$. According to Lemma \ref{lem:StarToCross}, the above procedure replaces the boundary of each simplex $\overline{G_i}$ with the boundary of a cross-polytope. Hence it transforms $\Delta$ into a $(d+1)$-dimensional cross-polytopal complex with each $(d+1)$-simplex $\overline{G_i}$ of $\Delta$ replaced with a $(d+1)$-dimensional cross-polytope $\G_i$.

\begin{definition} \label{diamond-op} 
We refer to the procedure described in the previous paragraph as the \emph{diamond operation} on $\Delta$, and denote the resulting cross-polytopal complex by $\Diamond(\Delta,\kappa)$ or simply by $\Diamond(\Delta)$ if $\kappa$ is understood. Note that each face $F$ of $\Delta$ with $d+1\notin \kappa(F)$ remains a face in $\Diamond(\Delta)$.
\end{definition}

%%%%%%%%%%%%%%%%%%%
%%%%%%%%%%%%%%%%%%%
\subsection{Connecting balanced spheres}
%%%%%%%%%%%%%%%%%%%
%%%%%%%%%%%%%%%%%%%

We are now ready to provide the proof of Theorem \ref{main-thm} for the case of
spheres.

\begin{theorem} \label{cross-poly->spheres}
Let $\Delta$ be a balanced simplicial complex. If $\Delta$ is a combinatorial
$d$-sphere, then there is a sequence of cross-flips that transforms $\C_d$ into
$\Delta$.
\end{theorem}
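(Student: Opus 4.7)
The plan is to imitate the proof of Theorem \ref{thm:SphereSimplex}, replacing simplices and bistellar flips by cross-polytopes and cross-flips. First I would invoke Theorem \ref{shellable-ball} to find a shellable $(d+1)$-ball $\Omega$ with $\partial\Omega=\Delta$ in which $\Delta$ is an induced subcomplex. The balanced coloring of $\Delta$ uses $d+1$ colors, and $\dim(\Omega,\Delta)=d+1$, so Theorem \ref{ext-coloring} extends it to a proper $(d+2)$-coloring of a stellar subdivision $\Omega'$ of $\Omega$ that leaves every face of $\Delta$ intact; by Theorem \ref{subdiv-shellability}, $\Omega'$ is again shellable.

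Next I would apply the diamond operation of Definition \ref{diamond-op} to the $(d+2)$-colored complex $\Omega'$, obtaining a cross-polytopal complex $\Diamond(\Omega')$ whose top-dimensional cells $\Gamma_0,\ldots,\Gamma_t$ biject with the facets $G_0,\ldots,G_t$ of $\Omega'$. Every $k$-face of $\Delta$ carries $k+1$ distinct colors from $\{0,1,\ldots,d\}$; a pigeonhole count forces it to meet the initial segment $\{0,1,\ldots,d-k\}$ of colors, so by Definition \ref{diamond-op} it escapes subdivision by diamond. Consequently $\Delta$ sits unchanged in $\Diamond(\Omega')$ as its cross-polytopal boundary.

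I would then fix a shelling $G_0,G_1,\ldots,G_t$ of $\Omega'$, let $\Omega'_j=\bigcup_{i\le j}\overline{G_i}$, and consider the induced filtration $\Gamma_0=\Diamond(\Omega'_0)\subset\cdots\subset\Diamond(\Omega'_t)=\Diamond(\Omega')$. Its initial boundary is $\partial\Gamma_0=\C_d$. At step $j\ge 1$, attaching $\Gamma_j$ to the previously assembled cross-polytopal complex should change the boundary by removing
\[
D_j:=\partial\Gamma_j\cap\partial\Diamond(\Omega'_{j-1})
\]
and gluing in $\C_d\setminus D_j$. Provided each such move is a legitimate cross-flip, chaining them through $j=1,\ldots,t$ transforms $\C_d$ into $\partial\Diamond(\Omega')=\Delta$, as required.

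The hard part will be verifying that each step is indeed a cross-flip, i.e., that $D_j$ is an induced subcomplex of the current boundary and that $D_j$ is both shellable and co-shellable in $\C_d$. On the simplicial side, the restriction face $r(G_j)$ of the shelling decomposes $\partial\overline{G_j}$ into $D_j^{\mathrm{simp}}$ and its complementary $N_j^{\mathrm{simp}}=\overline{r(G_j)}\ast\partial\overline{G_j\setminus r(G_j)}$, both of which are shellable $d$-balls by standard shelling theory for boundaries of simplices. I would then argue that the diamond operation, realized by the prescribed sequence of stellar subdivisions of flags from Lemma \ref{lem:StarToCross}, preserves the shellability of each half inside $\C_d\cong\Diamond(\partial\overline{G_j})$ via a facet-by-facet application of Theorem \ref{subdiv-shellability}. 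The inducedness of $D_j$ should propagate from that of $\Delta$ in $\Omega$ through the filtration and survive the diamond subdivisions. I expect the co-shellability of $D_j$, equivalently the shellability of $N_j$, to be the most delicate point, since $\C_d$ fails to be extendably shellable for $d\ge 11$ and one cannot simply concatenate an arbitrary shelling of $D_j$ with one of $N_j$; the crux will be choosing the two shellings compatibly.
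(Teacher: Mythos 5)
Your proposal tracks the paper's proof step for step: a shellable $(d+1)$-ball bounding $\Delta$ (Theorem \ref{shellable-ball}), extension of the balanced coloring of $\Delta$ to a balanced $(d+2)$-coloring of a stellar subdivision $\Omega'$ of that ball (Theorem \ref{ext-coloring}, which the paper packages as Corollary \ref{balanced->balanced}, with shellability preserved by Theorem \ref{subdiv-shellability}), the diamond operation, and cross-flips read off the shelling order. Your pigeonhole argument for why $\Delta$ escapes the diamond subdivision is a correct, if slightly roundabout, version of the paper's observation that no vertex of $\Delta$ carries color $d+1$.

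Your closing concern about co-shellability is, however, based on a misreading of Definition \ref{cross-flip-def} and is not a real obstacle. Co-shellability of $D_j$ means nothing more than shellability of the complement $\C_d\setminus D_j = N_j = \Diamond(N_j^{\mathrm{simp}})$; there is no compatibility condition to satisfy between a shelling of $D_j$ and a shelling of $N_j$. You already argued, correctly, that $N_j$ is a stellar subdivision of the shellable ball $N_j^{\mathrm{simp}} = \overline{r(G_j)}*\partial\overline{G_j\setminus r(G_j)}$, so Theorem \ref{subdiv-shellability} gives shellability of $N_j$; that \emph{is} co-shellability of $D_j$, and the argument is finished at that point. The non-extendable-shellability of $\C_d$ for $d\geq 11$ is a red herring: as the paper notes, once $D$ and $\C_d\setminus D$ are each shellable, \emph{any} shelling of $D$ followed by the reverse of \emph{any} shelling of $\C_d\setminus D$ shells $\C_d$, so no compatible choice is needed. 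On the secondary point, inducedness of $D_j^{\mathrm{simp}} = \overline{G_j}\cap\Omega'_{j-1}$ in $\partial\Omega'_{j-1}$ is automatic (a face of $\partial\Omega'_{j-1}$ all of whose vertices lie in $G_j$ is a subset of $G_j$, hence lies in $\overline{G_j}\cap\Omega'_{j-1}$) and does not need to be traced back to the inducedness of $\Delta$ in $\Omega$; stellar subdivisions then carry it forward through the diamond operation, exactly as the paper observes.
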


\begin{proof}
By Theorem \ref{shellable-ball}, there exists a shellable $(d+1)$-ball whose boundary is $\Delta$. Applying Corollary \ref{balanced->balanced} to this ball produces a balanced $(d+1)$-ball $\Omega$ such that $\partial\Omega=\Delta$ and the balanced coloring of $\Omega$ extends that of $\Delta$. In particular, no vertex of $\Delta$ has color $d+1$.

Since $\Omega$ is a stellar subdivision of a shellable ball, it is shellable by Theorem \ref{subdiv-shellability}. Let $G_0,G_1,\ldots, G_t$ be a shelling order on $\Omega$. Define $\Omega_0:=\overline{G_0}$, and for each $1\leq j \leq t$, 
define
\[
\Omega_j:=\overline{G_0}\cup\cdots\cup\overline{G_j} \quad \text{ and } \quad \Gamma_j := \overline{G_j} \cap \Omega_{j-1}.
\]
By definition of a shelling, $\Gamma_j$ is a pure $d$-dimensional subcomplex of $\partial \overline{G_j}$. Besides, $\Gamma_j \ne \partial \overline{G_j}$, since $\Omega_j$ is a ball and not a sphere.

Next, we apply the diamond operation (Definition \ref{diamond-op}) to $\Omega$. Each of the complexes $\Omega_j$ becomes a cross-polytopal complex $\Diamond(\Omega_j) = \Diamond(\overline{G_0}) \cup \cdots \cup \Diamond(\overline{G_j}).$  Consider the sequence
\[
\C_d \cong \partial(\Diamond(\Omega_0)) \to \partial(\Diamond(\Omega_1)) \to \cdots \to \partial(\Diamond(\Omega_t)) = \Delta,
\]
where $\partial(\Diamond(\Omega_t)) = \partial(\Omega_t) = \Delta$ because no vertex of $\Delta$ has color $d+1$. Let us show that every move in the above sequence is a cross-flip.

Indeed, as $\Omega_j$ is obtained from $\Omega_{j-1}$ by gluing $\overline{G_j}$ along $\Gamma_j \subset \partial \overline{G_j}$, the complex $\Diamond(\Omega_j)$ is obtained from $\Diamond(\Omega_{j-1})$ by gluing $\Diamond(\overline{G_j})$ along $\Diamond(\Gamma_j)$ %some stellar subdivision $D_j$ of $B_j$. 
It follows that the transformation $\partial(\Diamond(\Omega_j)) \to \partial(\Diamond(\Omega_{j-1}))$ replaces $\Diamond(\Gamma_j)$ with $\Diamond(\partial \overline{G}_j) \setminus \Diamond(\Gamma_j) \cong \C_d \setminus \Diamond(\Gamma_j)$. Further, $\Diamond(\Gamma_j)$ is shellable because it is obtained from the shellable complex $\Gamma_j$ through stellar subdivision(s); similarly, its complement in $\C_d$ is shellable. Also, stellar subdivisions preserve the property of subcomplexes being induced; therefore $\Diamond(\Gamma_j)$ is an induced subcomplex of $\partial \Diamond(\Omega_{j-1})$ since $\Gamma_j$ was an induced subcomplex of $\partial \Omega_{j-1}$. Thus all conditions in the definition of a cross-flip are fulfilled, and we have a sequence of cross-flips taking $\C_d$ to $\Delta$.
\end{proof}

\begin{corollary} \label{all-spheres}
If $\Delta$ and $\Gamma$ are balanced combinatorial $d$-spheres, then there is a
sequence of cross-flips that transforms $\Delta$ into $\Gamma$. Moreover, there 
is a sequence of cross-flips that transforms a given coloring of $\Delta$ into a 
given coloring of $\Gamma$.
\end{corollary}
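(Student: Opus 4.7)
The first assertion follows at once from Theorem~\ref{cross-poly->spheres}: applied to $\Delta$ it produces a sequence of cross-flips $\C_d \to \Delta$, and applied to $\Gamma$ it produces $\C_d \to \Gamma$. Since the inverse of a cross-flip is a cross-flip, reversing the first sequence and concatenating with the second yields the desired sequence $\Delta \to \C_d \to \Gamma$.

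For the ``moreover'' statement, each cross-flip canonically transports a vertex coloring: the complex obtained by replacing $D$ with $\C_d \setminus D$ inherits a proper coloring from the old one, as recorded in the discussion following Definition~\ref{cross-flip-def}. Hence the sequence constructed above carries $\kappa_\Delta$ to some proper $(d+1)$-coloring $\kappa'$ of $\Gamma$. Because $\Gamma$ is connected and balanced, any two proper $(d+1)$-colorings of $\Gamma$ are related by a permutation of $\{0, 1, \ldots, d\}$ (two colorings that agree on a single facet must, by propagation across adjacent facets, agree on all of $\Gamma$), and so $\kappa_\Gamma = \sigma \circ \kappa'$ for a unique $\sigma \in \Sym(d+1)$. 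It remains to realize $\sigma$ by a sequence of cross-flips from $\Gamma$ to itself.

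Since $\Sym(d+1)$ is generated by transpositions, it suffices, for each pair of colors $i, j$, to construct a cross-flip sequence taking $(\Gamma, \kappa)$ to $(\Gamma, \tau_{ij} \circ \kappa)$, where $\tau_{ij}$ is the transposition of $i$ and $j$. My plan is to sandwich: first use Theorem~\ref{cross-poly->spheres} to transform $(\Gamma, \kappa)$ to some $(\C_d, \kappa_0)$; then apply an explicit cross-flip sequence on $\C_d$ realizing the swap of colors $i$ and $j$; and finally use Theorem~\ref{cross-poly->spheres} in reverse to return to $(\Gamma, \tau_{ij} \circ \kappa)$. Iterating over the transpositions in any factorization of $\sigma$ then converts $\kappa'$ to $\kappa_\Gamma$.

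The main obstacle is the middle step: producing a cross-flip sequence on $\C_d$ that swaps two colors without changing the underlying simplicial complex. I would attack this by exhibiting induced, shellable, and co-shellable subcomplexes $D \subset \C_d$ whose cross-flip relabels the antipodal pairs carrying colors $i$ and $j$. The hyperoctahedral automorphism group of $\C_d$ contains each color transposition, so the task is to decompose such an automorphism into a product of cross-flip moves; this is a concrete combinatorial construction on the boundary of the cross-polytope, and once in hand it closes the argument.
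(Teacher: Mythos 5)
Your proof of the first assertion is correct and is exactly the intended argument: reverse one application of Theorem~\ref{cross-poly->spheres} and concatenate with the other.

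For the ``moreover'' part, however, your argument is incomplete, and you say so yourself: you reduce the problem to realizing a color transposition on $\C_d$ by a cross-flip sequence, but you do not produce such a sequence, leaving this as an ``obstacle.'' Since that special case (namely $\Delta = \Gamma = \C_d$ with colorings differing by a transposition) is itself an instance of the corollary, the reduction does not by itself constitute a proof. More importantly, the obstacle is illusory and the detour is unnecessary. When you concatenate the two sequences $\Delta \to \C_d$ and $\C_d \to \Gamma$, you are free to choose the identification of the two copies of $\C_d$ to be \emph{any} simplicial isomorphism. The automorphism group of $\C_d$ contains, for every permutation $\sigma$ of $\{0,\ldots,d\}$, an automorphism sending each antipodal pair $\{x_i,y_i\}$ to $\{x_{\sigma(i)},y_{\sigma(i)}\}$; hence the hyperoctahedral group acts transitively on the balanced $(d+1)$-colorings of $\C_d$. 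Combined with the observation (which you implicitly use in your ``sandwich'' plan) that color propagation along a cross-flip is equivariant under post-composition with a color permutation --- if a sequence carries $\kappa$ to $\kappa'$, the same sequence carries $\sigma\circ\kappa$ to $\sigma\circ\kappa'$ --- this means you can always pick the identification at the $\C_d$ waypoint so that the coloring propagated backwards from $\kappa_\Delta$ matches, vertex by vertex, the one propagated backwards from $\kappa_\Gamma$. The concatenation then carries $\kappa_\Delta$ directly to $\kappa_\Gamma$ and no extra cross-flips on $\C_d$ are needed. This is the content of the paper's remark following the corollary: the example of the bipyramid over a $(2n)$-gon shows that the second statement is a genuine strengthening (the empty sequence that witnesses the first statement does not witness the second), yet the proof is still just the concatenation, with the automorphism group of $\C_d$ absorbing the color mismatch at the midpoint.
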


The second statement of the corollary is a non-trivial strengthening of the 
first one. To see this, consider different colorings of the bipyramid over a 
$(2n)$-gon. To transform one coloring into another, ``shrink'' the equator of a 
bipyramid by repeatedly applying the inverse of the second move in Figure 
\ref{fig:CrossFlips2} until we obtain an octahedron; then repeatedly ``expand'' the other equator of the octahedron to 
obtain a different coloring of the same bipyramid. Without specifying colorings, it is not necessary to apply any cross-flips.

\begin{remark}
\label{rem:3Suffice}
It follows from the proof of Theorem \ref{cross-poly->spheres} that it suffices to consider cross-flips of the form $\Diamond(\Gamma) \rightarrow \C_d\setminus \Diamond(\Gamma)$ where $\Gamma \subseteq \partial\sigma^{d+1}$ is a $d$-ball.  For example, in dimension two, these are exactly the cross-flips that appear in the first two rows of Figure \ref{fig:CrossFlips2} (see also, Figure \ref{simplex-fig}). Since the first move in the second row is trivial, the other three moves suffice to connect any two balanced triangulations of a surface.

In higher dimensions, we do not have an explicit description of a minimal set of cross-flips that suffice to connect any pair of balanced, PL homeomorphic combinatorial manifolds.
\end{remark}

\section{Background, definitions, and elementary properties -- part II}
In this section we discuss several additional definitions and results needed to prove Theorems \ref{color-thm} and \ref{main-thm} in their full generality.  We start by slightly generalizing the notion of elementary shellings from \S\ref{subsect:shellability1}.

\subsection{Bistellar flips and elementary pseudo-cobordisms}
\label{sec:ElemPseudo}
Recall that a bistellar flip from a simplicial complex $\Delta$ to a  simplicial complex $\Delta'$ replaces $\overline{A} *  \partial\overline{B}$ with $\partial\overline{A} * \overline{B}$ for simplices $A \in \Delta$ and $B \notin \Delta$ such that $\lk_{\Delta}(A) = \partial \overline{B}$ (see \S 2.1).   We call the simplicial complex $\Delta \cup (\overline{A}*\overline{B})$ an \emph{elementary pseudo-cobordism} between $\Delta$ and $\Delta'$. Since the inverse of a bistellar flip is itself a bistellar flip and $\Delta' \cup (\overline{A} * \overline{B}) = \Delta \cup (\overline{A} * \overline{B})$, this definition is symmetric with respect to $\Delta$ and $\Delta'$.

Geometrically (and somewhat informally) we can say that we are gluing the $(d+1)$-dimensional simplex on $A \cup B$ ``on top'' of the complex $\Delta$ along the subcomplex $\overline{A}*\partial\overline{B}$. The  complex $\Delta'$ is what we see when we look at $\Delta \cup (\overline{A} * \overline{B})$  ``from above.'' Figure \ref{fig:Cobord2} illustrates an example for $d=2$ (the view from ``above'' is shown on the right).

\begin{figure}[htb]
\input{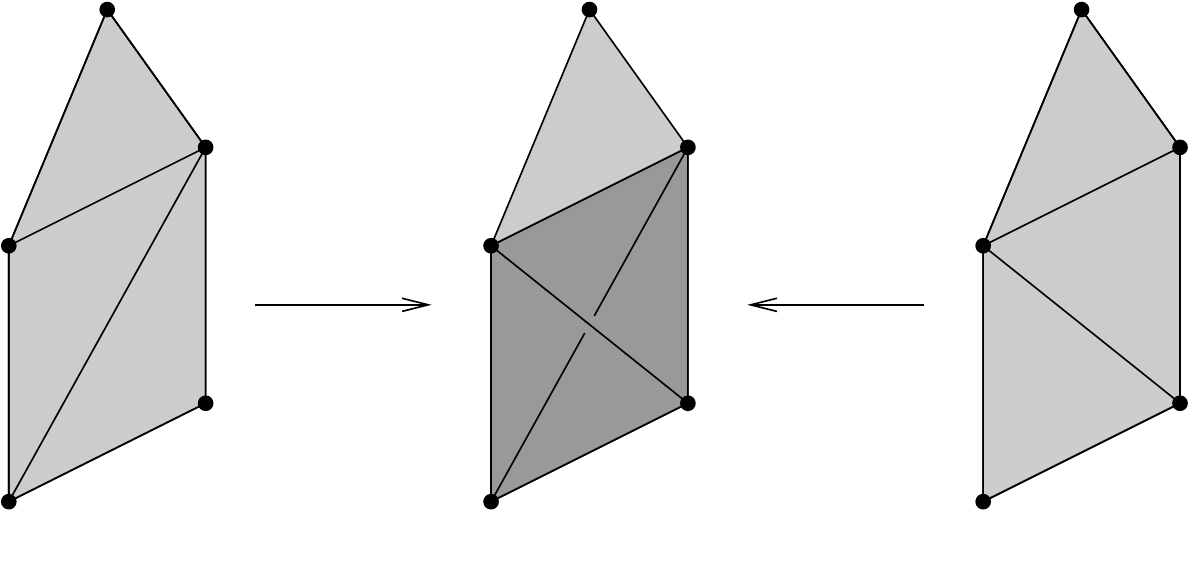_t}
\caption{An elementary pseudo-cobordism.}
\label{fig:Cobord2}
\end{figure}

Performing more bistellar flips will result in gluing more $(d+1)$-dimensional  simplices on top of~$\Delta$. This can lead to a cell complex that is no  longer a simplicial complex. For example, this is the case when we undo an edge flip in dimension $2$. Therefore we need to enlarge our scope to study the more general class of simplicial posets.

\subsection{Simplicial posets}
\label{sec:SimPos}
A \textit{simplicial poset} is a finite poset $P$ with a unique minimal element, $\emptyset$, such that the interval $\left[\emptyset,\tau\right]$ is isomorphic to a Boolean lattice for each $\tau \in P$.  A simplicial poset is naturally graded by declaring that $\rk(\tau) = k$ if $\left[\emptyset,\tau\right]$ is a Boolean lattice of rank $k$.  Geometrically, a simplicial poset can be realized as a regular CW-complex, denoted $|P|$, by inserting a $(\rk(\tau)-1)$-dimensional simplex for each face $\tau \in P$; the faces of this simplex correspond to the elements of $\left[\emptyset,\tau\right]$. For this reason, simplicial posets are also referred to  as \textit{simplicial cell complexes}, \textit{Boolean cell complexes}, or \textit{pseudo-simplicial complexes} in the literature.  The maximal elements of $P$ are called \textit{facets}.

The key difference between a simplicial complex and a simplicial poset is that two faces in a simplicial poset may intersect along an arbitrary subcomplex of their boundaries, whereas two faces in a simplicial complex intersect along a (possibly empty) face. On the other hand, the assumption that each interval $\left[\emptyset, \tau\right]$ is a Boolean lattice means that all attaching maps are injective. In particular, the $1$-skeleton of a simplicial cell complex contains no loops, but may contain multiple edges. Thus it is still natural to study vertex colorings on simplicial posets, and we say that a $d$-dimensional simplicial poset is \textit{balanced} if its $1$-skeleton admits a proper vertex $(d+1)$-coloring.

Since each cell of a simplicial poset is a simplex, many definitions pertaining to simplicial complexes have natural extensions to simplicial posets. For instance, if $(P, \leq)$ is a simplicial poset and $\tau$ is a face of $P$, then the \textit{deletion} of $\tau$ from $P$ as well as the \textit{star}  and the \textit{link} of $\tau$ in $P$ are defined as follows:
\[
P\setminus \tau := \{\sigma\in P \, : \, \tau\not\leq \sigma\}, \quad \lk_P(\tau):=\{\sigma\in P \, : \, \tau\leq \sigma\}, \quad \text{and} \quad \st_P(\tau):=\bigcup_{\sigma \in \lk_P(\tau)} [\emptyset, \sigma]. 
\]
The deletions and stars of simplicial posets are themselves simplicial posets, and so are the links. However, it is worth noting that $\lk_{P}(\tau)$ is a simplicial poset with minimal element $\tau$, and that, in contrast to the setting of simplicial complexes, it is not naturally a subcomplex of $P$ (e.g., consider the link of a vertex in a graph made of two vertices and two edges joining them).

Similarly, if $(P,\leq)$ and $(P',\leq')$ are two simplicial posets, then their \text{join}, $(P\ast P', \preceq)$, is the simplicial poset on the set $P\times P'$ with the order relation $(\tau,\tau')\preceq (\sigma,\sigma')$ if $\tau\leq \sigma$ and $\tau'\leq'\sigma'$.  The \textit{stellar subdivision} of a simplicial poset $P$ at a face $\tau$ is defined as
\[
\sd_\tau(P):= \left(P\setminus \tau\right) \cup \left([\emptyset,a] \ast [\emptyset, \tau) \ast \lk_P(\tau)\right), 
\]
where $a$ is a new element of rank $1$ (and the elements of $P\setminus \tau$ and $\{a\} \ast [\emptyset, \tau) \ast \lk_P(\tau)$ are incomparable).
Somewhat informally, to construct $\sd_\tau(P)$, we perform a stellar subdivision at $\tau$ for each simplex of $P$ that contains $\tau$. With these definitions in hand, the statement and the proof of Theorem \ref{ext-coloring}, and hence also of Corollary \ref{balanced->balanced}, mechanically extend to the case of simplicial posets.

In analogy with relative simplicial complexes, a \textit{relative simplicial poset} is a pair of simplicial posets $(P,Q)$ such that $Q \subseteq P$ is a lower order ideal (i.e., if $\tau \in Q$ and $\sigma < \tau$, then $\sigma \in Q$).  The \textit{dimension} of a relative simplicial poset is $\dim(P,Q) = \max\{ \dim(\tau) \mid \tau \in P \setminus Q\}$.  A \textit{facet} in a relative simplicial poset is a face $\sigma \in P \setminus Q$ that is not properly contained in any other faces of $P \setminus Q$.  A relative simplicial poset is \textit{pure} if all of its facets have the same dimension.

As in the case of simplicial complexes, a pure simplicial poset is \textit{shellable} if its facets can be ordered $F_1, F_2, \ldots, F_t$ such that for all $j$ there exists a face $r(F_j) \leq F_j$ such that $$[\emptyset, F_j] \setminus \bigcup_{i<j} [\emptyset, F_i] = [r(F_j),F_j].$$ 
More generally, a relative simplicial poset $(P,Q)$ is \textit{shellable} if it is pure and its facets can be ordered $F_1, F_2, \ldots, F_t$ such that for all $1 \leq j \leq t$, there exists a face $r(F_j) \leq F_j$ such that
\begin{equation}
\label{eqn:RelShellPoset}
[\emptyset, F_j] \setminus \Bigg( \bigcup_{i < j} [\emptyset, F_i] \cup Q \Bigg) = [r(F_j), F_j].
\end{equation}

There are many equivalent definitions of a shellable simplicial complex.  The following lemma is a relative analogue of one of these equivalent definitions.

\begin{lemma} \label{alt-def-rel-shelling}
Let $(P,Q)$ be a shellable relative simplicial poset with shelling order $F_1, F_2, \ldots, F_t$.  For any $j$ and any vertex $v \leq F_j$, 
$$
v \le r(F_j) \qquad \text{if and only if } \qquad F_j \setminus v \in \bigcup_{i < j} [\emptyset, F_i] \cup Q,
$$
with the convention that $F_j \setminus v$ is the unique face covered by $F_j$ that does not contain $v$.
\end{lemma}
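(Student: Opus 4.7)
The plan is to simply apply the defining equation \eqref{eqn:RelShellPoset} of a relative shelling to the specific coatom $\sigma = F_j \setminus v$ of the interval $[\emptyset, F_j]$, and then use the Boolean lattice structure of this interval to rewrite the condition $r(F_j) \le F_j \setminus v$ in terms of $v$ and $r(F_j)$.

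First, I would observe that \eqref{eqn:RelShellPoset} can be restated contrapositively: for every face $\sigma \in [\emptyset, F_j]$,
$$\sigma \in \bigcup_{i < j} [\emptyset, F_i] \cup Q \quad \Longleftrightarrow \quad r(F_j) \not\le \sigma.$$
This is immediate from the fact that the left-hand side of \eqref{eqn:RelShellPoset} consists exactly of those $\sigma \le F_j$ that lie outside $\bigcup_{i<j}[\emptyset,F_i] \cup Q$, while the right-hand side $[r(F_j), F_j]$ consists exactly of those $\sigma \le F_j$ with $r(F_j) \le \sigma$.

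Next, I would specialize the previous equivalence to $\sigma = F_j \setminus v$. By the convention stated in the lemma, $F_j \setminus v$ is the unique face of rank $\rk(F_j) - 1$ that lies below $F_j$ and does not have $v$ as a vertex; this face is well-defined because $[\emptyset, F_j]$ is a Boolean lattice. Under the identification of $[\emptyset, F_j]$ with the power set of the vertex set of $F_j$, the face $F_j \setminus v$ corresponds to removing the single element $v$. Hence $r(F_j) \le F_j \setminus v$ holds precisely when the vertex set of $r(F_j)$ is contained in the vertex set of $F_j$ with $v$ omitted, which is equivalent to $v \not\le r(F_j)$. Taking the negation yields
$$r(F_j) \not\le F_j \setminus v \quad \Longleftrightarrow \quad v \le r(F_j).$$

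Combining the two equivalences gives
$$F_j \setminus v \in \bigcup_{i < j} [\emptyset, F_i] \cup Q \quad \Longleftrightarrow \quad v \le r(F_j),$$
which is the statement of the lemma. There is no real obstacle here: the result is essentially a direct translation of the definition of a relative shelling into the vertex-by-vertex language that is convenient for the subsequent applications. The only point requiring a moment of care is invoking the Boolean lattice structure on $[\emptyset, F_j]$ to justify the ``removing vertex $v$'' description of the coatom $F_j \setminus v$.
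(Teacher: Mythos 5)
Your proposal is correct and follows essentially the same chain of equivalences as the paper's proof: both rewrite $v \le r(F_j)$ as $r(F_j) \not\le F_j \setminus v$ using the Boolean lattice structure of $[\emptyset, F_j]$, and then invoke the defining equation of a relative shelling applied to the coatom $F_j \setminus v$. The presentation order differs slightly, but the argument is the same.
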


\begin{proof}
Note that 
\begin{eqnarray*}
v \leq r(F_j) & \Leftrightarrow& r(F_j) \not\leq F_j \setminus v \\
&\Leftrightarrow& F_j \setminus v \notin [r(F_j), F_j] \\
&\Leftrightarrow& F_j \setminus v \in \bigcup_{i < j} [\emptyset, F_i] \cup Q.
\end{eqnarray*}
The equivalence in the last line comes from the definition of a shelling since $F_j \setminus v \in [\emptyset, F_j]$.
\end{proof}

We also extend the notion of a \textit{cross-polytopal complex} (see Definition \ref{def:cross-poly-compl}) to mean a $(d+1)$-dimensional regular CW complex $\mathcal{X}$ such that (i) the $d$-dimensional skeleton of $\mathcal{X}$ is a \textit{simplicial poset}, and (ii) the boundary of each $(d+1)$-dimensional cell is isomorphic to $\C_d$. As a result, the diamond operation of Definition \ref{diamond-op} can now be applied to any balanced simplicial poset.

%%%%%%%%%%%%
\subsection{Pseudo-cobordisms, relative shellings, and bistellar flips}
%%%%%%%%%%%%

Now we are prepared to make the key idea presented at the beginning of this section -- that a sequence of bistellar flips on a $d$-dimensional simplicial complex can be encoded by successively attaching $(d+1)$-simplices -- more precise.  We begin with the following definitions.

\begin{definition}
Let $\Omega$ be a $(d+1)$-dimensional simplicial poset.  We say that $\Omega$ is a \textit{nonpure pseudomanifold} if every $d$-face in $\Omega$ is contained in at most two (and possibly zero) $(d+1)$-faces of $\Omega$.  The \textit{pseudoboundary} of a nonpure pseudomanifold $\Omega$, denoted $\widetilde{\partial} \Omega$, is the subcomplex of $\Omega$ induced by all $d$-faces that are contained in zero or one $(d+1)$-faces of $\Omega$.
\end{definition}

\begin{definition}
\label{dfn:PseudoCobord}
Let $\Delta$ and $\Delta'$ be closed combinatorial $d$-manifolds (in particular, $\Delta$ and $\Delta'$ are simplicial complexes).    A \textit{pseudo-cobordism} between $\Delta$ and $\Delta'$ is a $(d+1)$-dimensional nonpure pseudomanifold $\Omega$, together with a pair of simplicial embeddings $\varphi: \Delta \hookrightarrow \Omega$ and $\varphi': \Delta' \hookrightarrow \Omega$ such that
\begin{enumerate}
\item $\varphi(\Delta) \cup \varphi'(\Delta') = \widetilde{\partial}\Omega$ and
\item a $d$-face $F \in \Omega$ belongs to $\varphi(\Delta) \cap \varphi'(\Delta')$ if and only if $F$ is not contained in any $(d+1)$-faces of $\Omega$.
\end{enumerate}
\end{definition}

An example of a pseudo-cobordism is the elementary pseudo-cobordism described in Section~\ref{sec:ElemPseudo}. Note that the second condition in Definition \ref{dfn:PseudoCobord} implies that each of the pairs $(\Omega, \phi(\Delta))$ and $(\Omega, \phi'(\Delta'))$ is a pure relative simplicial poset, that is, $\Omega$ can be obtained from $\Delta$ (or from $\Delta'$) by attaching a number of $(d+1)$-dimensional simplices. As always, we define \emph{shellable} pseudo-cobordisms as those where the attaching  of $(d+1)$-simplices can be done in an especially nice way:

\begin{definition} \label{def:shellablePseudoCobord}
Let $\Delta$ and $\Delta'$ be closed combinatorial $d$-manifolds. A \textit{pseudo-cobordism} $\Omega$ between $\Delta$ and $\Delta'$ is \textit{shellable} if its $(d+1)$-dimensional faces can be ordered $F_1, F_2, \ldots, F_t$ such that
\begin{enumerate}
\item $F_1, F_2, \ldots, F_t$ is a shelling order on $(\Omega, \varphi(\Delta))$ and 
\item $F_t, F_{t-1}, \ldots, F_1$ is a shelling order on $(\Omega, \varphi'(\Delta'))$.
\end{enumerate}
\end{definition}

Shellable pseudo-cobordisms with a unique $(d+1)$-face are precisely the elementary pseudo-cobordisms from Section \ref{sec:ElemPseudo}.

\begin{definition}
Let $(\Omega, \phi, \phi')$ be a pseudo-cobordism between $\Delta$ and 
$\Delta'$, and $(\Omega', \psi', \psi'')$ be a pseudo-cobordism between 
$\Delta'$ and $\Delta''$. Then the triple,
\begin{equation}
\label{eqn:GluingCobord}
(\Omega \cup_{(\phi',\psi')} \Omega', \phi, \psi'')
\end{equation}
where $\Omega \cup_{(\phi',\psi')} \Omega'$ is the result of gluing $\Omega$ and $\Omega'$ along the isomorphic subcomplexes $\phi'(\Delta')$ and $\psi'(\Delta')$, is called the \emph{composition} of pseudo-cobordisms $(\Omega, \phi, \phi')$ and $(\Omega', \psi', \psi'')$.  We denote by $\phi'(\Delta')\sim \psi'(\Delta')$ the image of $\phi'(\Delta')$ (and also of $\psi'(\Delta')$) in $\Omega \cup_{(\phi',\psi')} \Omega'$.
\end{definition}

\begin{lemma}
\label{lem:GluingCobord}
The composition of pseudo-cobordisms is again a pseudo-cobordism. Further, the composition of shellable pseudo-cobordisms is a shellable pseudo-cobordism.
\end{lemma}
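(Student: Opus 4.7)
The plan is to verify the two defining conditions of a pseudo-cobordism (Definition~\ref{dfn:PseudoCobord}) for the composite complex $\widetilde{\Omega} := \Omega \cup_{(\phi',\psi')} \Omega'$, and then argue that concatenating shellings yields a shelling of the composite.

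For the first assertion, I would first check that every $d$-face of $\widetilde{\Omega}$ is contained in at most two $(d+1)$-faces. The only nontrivial case is a $d$-face $F$ in the glued subcomplex $\phi'(\Delta') \sim \psi'(\Delta')$; since this subcomplex lies in the pseudoboundaries of both $\Omega$ and $\Omega'$, such $F$ is contained in at most one $(d+1)$-face on each side, hence in at most two of $\widetilde{\Omega}$. To verify conditions (1) and (2) of Definition~\ref{dfn:PseudoCobord}, I would classify each $d$-face of $\widetilde{\Omega}$ as belonging to $\Omega$ alone, to $\Omega'$ alone, or to the shared subcomplex. For faces in the shared subcomplex, condition (2) applied separately to $\Omega$ and to $\Omega'$ translates ``$F$ is in no $(d+1)$-face of $\Omega$'' into ``$F \in \phi(\Delta)$'', and similarly for $\Omega'$. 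Combining these shows that $F$ lies in $\widetilde{\partial}\widetilde{\Omega}$ iff $F \in \phi(\Delta) \cup \psi''(\Delta'')$, and that $F$ lies in no $(d+1)$-face of $\widetilde{\Omega}$ iff $F \in \phi(\Delta) \cap \psi''(\Delta'')$. Faces of the other two types are treated analogously but more easily.

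For the shellability statement, let $F_1, \ldots, F_s$ and $G_1, \ldots, G_t$ be shellings of $\Omega$ and $\Omega'$, respectively, in the sense of Definition~\ref{def:shellablePseudoCobord}. I claim that the concatenation $F_1, \ldots, F_s, G_1, \ldots, G_t$ is a shelling of $(\widetilde{\Omega}, \phi(\Delta))$ and that its reverse is a shelling of $(\widetilde{\Omega}, \psi''(\Delta''))$, with restriction faces inherited from the original shellings. For each $F_i$, shelling condition~\eqref{eqn:RelShellPoset} is identical to the one for $(\Omega, \phi(\Delta))$ since neither the set of previously shelled facets nor the lower ideal involves $\Omega'$. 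For each $G_j$, I would use the identity $\bigcup_{i \leq s} [\emptyset, F_i] \cup \phi(\Delta) = \Omega$, which holds because any face of $\Omega$ is either under some $(d+1)$-facet $F_i$ or is a subface of a $d$-dimensional facet of $\Omega$, and condition~(2) of Definition~\ref{dfn:PseudoCobord} forces every such $d$-facet to lie in $\phi(\Delta) \cap \phi'(\Delta')$. Combined with the identification $[\emptyset, G_j] \cap \Omega = [\emptyset, G_j] \cap \psi'(\Delta')$ across the gluing, the shelling condition for $G_j$ in the composite reduces to
\[
[\emptyset, G_j] \setminus \Bigl( \psi'(\Delta') \cup \bigcup_{k < j} [\emptyset, G_k] \Bigr) = [r(G_j), G_j],
\]
which is precisely the shelling condition for $G_1, \ldots, G_t$ on $(\Omega', \psi'(\Delta'))$. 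The reverse ordering is handled by the symmetric computation, using that $G_t, \ldots, G_1$ shells $(\Omega', \psi''(\Delta''))$ and $F_s, \ldots, F_1$ shells $(\Omega, \phi'(\Delta'))$.

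The main technical obstacle is the careful bookkeeping of which faces belong to which piece of the composite, and verifying the set-theoretic identity above so that no face is inadvertently shelled off too early or too late. Once the partition of $d$-faces into the three types is cleanly established and the identity $\bigcup_{i \leq s} [\emptyset, F_i] \cup \phi(\Delta) = \Omega$ is in hand, the remainder is essentially a translation of the two given shelling conditions into the corresponding conditions on $\widetilde{\Omega}$.
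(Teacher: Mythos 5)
Your proposal is correct and takes essentially the same approach as the paper's own proof: a case analysis on which piece of the composite a given $d$-face lies in (the paper phrases this via degree counting, $\deg_{\Omega''}\sigma = \deg_\Omega\sigma + \deg_{\Omega'}\sigma$ on the shared subcomplex), followed by concatenation of the two shelling orders with the observation that $[\emptyset, G_j] \cap \Omega = [\emptyset, G_j] \cap \psi'(\Delta')$ to reduce the shelling condition in the composite to the one in $(\Omega', \psi'(\Delta'))$. Your explicit use of the identity $\bigcup_i [\emptyset, F_i] \cup \varphi(\Delta) = \Omega$ makes that reduction slightly more transparent than the paper's terser justification, but the underlying argument is the same.
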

\begin{proof}
We start by showing that \eqref{eqn:GluingCobord} is a pseudo-cobordism between $\Delta$ and $\Delta''$. To do so, for a $d$-face $\sigma \in \Omega$, we let $\deg_\Omega \sigma$ denote the number of $(d+1)$-faces of $\Omega$ containing $\sigma$. The definition of a pseudo-cobordism between $\Delta$ and $\Delta'$ is then equivalent to the following:
\begin{itemize}
\item each $d$-face has degree at most $2$;
\item the $d$-faces of degree $1$ are the $d$-faces of the symmetric difference of $\phi(\Delta)$ and $\phi'(\Delta')$;
\item the $d$-faces of degree $0$ are the $d$-faces of the intersection of $\phi(\Delta)$ and $\phi'(\Delta')$.
\end{itemize}

The degree of a $d$-face $\sigma \in \Omega'' := \Omega \cup_{(\phi',\psi')} \Omega'$ depends on where $\sigma$ lies with respect to the images of $\Delta$, $\Delta'$, and $\Delta''$ in $\Omega''$. Namely,
\[
\deg_{\Omega''} \sigma =
\begin{cases}
\deg_\Omega \sigma, &\text{if } \sigma \in \Omega \setminus \phi'(\Delta'),\\
\deg_{\Omega'} \sigma, &\text{if } \sigma \in \Omega' \setminus \psi'(\Delta'),\\
\deg_\Omega \sigma + \deg_{\Omega'} \sigma, &\text{if } \sigma \in \phi'(\Delta') \sim \psi'(\Delta').
\end{cases}
\]
It follows that $\deg_{\Omega''} \sigma \le 2$.

Further, we have $\deg_{\Omega''} \sigma = \deg_{\Omega} \sigma = 1$ if and only if $\sigma$ belongs to one of the sets
\[
\phi(\Delta) \setminus \phi'(\Delta'), \quad \phi(\Delta) \cup \phi'(\Delta') \setminus \psi''(\Delta'').
\]
It is easy to see that the union of these sets is $\phi(\Delta) \setminus \psi''(\Delta'')$. Similarly, we have $\deg_{\Omega''} \sigma = \deg_{\Omega'} \sigma = 1$ if and only if $\sigma \in \psi''(\Delta'') \setminus \phi(\Delta)$. Hence $\deg_{\Omega''} \sigma = 1$ if and only if $\sigma$ belongs to the symmetric difference of $\phi(\Delta)$ and $\psi''(\Delta'')$.

Finally, if $\sigma \in \Omega$ and $\deg_{\Omega''} \sigma = 0$, then $\sigma \in \phi'(\Delta') \sim \psi'(\Delta')$. Hence $\sigma \in \Omega \cap \Omega'$ and $\deg_{\Omega} \sigma = \deg_{\Omega'} \sigma = 0$. This occurs if and only if
\[
\sigma \in \left( \phi(\Delta) \cap \phi'(\Delta') \right) \cap \left( \psi'(\Delta') \cap \psi''(\Delta'') \right) = \phi(\Delta) \cap \psi''(\Delta'').
\]
For the last equality, note that if $\sigma$ belongs to $\phi(\Delta) \cap \psi''(\Delta'')$, then $\Omega$ and $\Omega'$ were glued along $\sigma$, which implies that $\sigma \in \phi'(\Delta') \sim \psi'(\Delta')$.

It remains to show that shellability of pseudo-cobordisms $\Omega$ and $\Omega'$ implies that of $\Omega''$. Let $F_1, \ldots, F_t$ and $G_1, \ldots, G_s$ be shelling orders of $(\Omega, \varphi(\Delta))$ and $(\Omega', \psi'(\Delta'))$ respectively,  such that their reversals are shelling orders of $(\Omega, \varphi'(\Delta'))$ and $(\Omega', \psi''(\Delta''))$, respectively.
We claim that $F_1, \ldots, F_t, G_1, \ldots, G_s$ is a shelling order of $(\Omega'', \varphi(\Delta))$.

Indeed, for $F_1, \ldots, F_t$ there is nothing to check since it is already a shelling order on $(\Omega, \varphi(\Delta)$ that is not affected by the faces of $\Omega' \setminus \Omega$.
As for $G_1, \ldots, G_s$, we have
\[
[\emptyset, G_j] \setminus \Bigg( \bigcup_{k=1}^t [\emptyset, F_k] \cup \bigcup_{i<j} [\emptyset, G_i] \cup \varphi(\Delta) \Bigg) 
= [\emptyset, G_j] \setminus \Bigg( \bigcup_{i<j} [\emptyset, G_i] \cup \psi'(\Delta') \Bigg)
= [r(G_j),G_j],
\]
since the only faces of $\Omega$ that belong to $[\emptyset, G_j]$ are those in $\varphi'(\Delta') \sim \psi'(\Delta')$.  The proof that $G_s, \ldots, G_1, F_t, \ldots, F_1$ is a shelling order of $(\Omega'', \psi''(\Delta''))$ is identical.
\end{proof}

\begin{proposition}
\label{prp:ShellCompos}
A pseudo-cobordism is shellable if and only if it can be represented as a composition of elementary pseudo-cobordisms.
\end{proposition}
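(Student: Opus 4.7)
The $(\Leftarrow)$ direction is immediate from Lemma \ref{lem:GluingCobord}. An elementary pseudo-cobordism has a single $(d+1)$-face and is trivially shellable; iterating Lemma \ref{lem:GluingCobord} along the composition shows the whole pseudo-cobordism is shellable.

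For the $(\Rightarrow)$ direction, let $\Omega$ be a shellable pseudo-cobordism between closed combinatorial $d$-manifolds $\Delta$ and $\Delta'$ with shelling $F_1,F_2,\ldots,F_t$ satisfying both conditions (i) and (ii) of Definition \ref{def:shellablePseudoCobord}. I would proceed by induction on $t$, peeling off $F_1$ as an elementary pseudo-cobordism. Set $B_1:=r(F_1)$ and $A_1:=F_1\setminus B_1$. By Lemma \ref{alt-def-rel-shelling} together with the shelling condition \eqref{eqn:RelShellPoset}, the faces of $\overline{F_1}$ already present in $\varphi(\Delta)$ form precisely the subcomplex $\overline{A_1}\ast\partial\overline{B_1}$, while the faces in $[B_1,F_1]$ are new.

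I would then verify the conditions \eqref{eqn:BiMove1} for $(A_1,B_1)$ in $\Delta$: clearly $A_1\in\varphi(\Delta)$ (it sits in $\overline{A_1}\ast\partial\overline{B_1}$), while $B_1\notin\varphi(\Delta)$ (since $B_1\in[B_1,F_1]$, the new part). For the link equality $\lk_\Delta(A_1)=\partial\overline{B_1}$, the inclusion $\supseteq$ follows from $\overline{A_1}\ast\partial\overline{B_1}\subseteq\varphi(\Delta)$, and the reverse inclusion follows because $\lk_\Delta(A_1)$ and $\partial\overline{B_1}$ are PL spheres of the same dimension $d-|A_1|=|B_1|-2$ (using $|A_1\cup B_1|=d+2$), exactly as in the argument following \eqref{eqn:BiMove2}. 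Hence attaching $\overline{F_1}$ to $\Delta$ along $\overline{A_1}\ast\partial\overline{B_1}$ is an elementary pseudo-cobordism between $\Delta$ and the complex $\Delta_1$ produced by the bistellar flip on $(A_1,B_1)$.

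To apply induction I would then form the sub-pseudo-cobordism $\Omega'\subset\Omega$ with $(d+1)$-faces $F_2,\ldots,F_t$, re-embedded so that its bottom boundary is $\varphi(\Delta_1)$ and top remains $\varphi'(\Delta')$. The crucial point is the identity
\[
\varphi(\Delta)\cup[\emptyset,F_1]=\varphi(\Delta_1)\cup[\emptyset,F_1],
\]
which is built into the bistellar flip: the symmetric difference of $\varphi(\Delta)$ and $\varphi(\Delta_1)$ is absorbed into $[\emptyset,F_1]$. Using this, the restriction face $r(F_j)$ computed relative to $(\Omega,\varphi(\Delta))$ agrees with the one computed relative to $(\Omega',\varphi(\Delta_1))$ for each $j\ge 2$, so condition (i) for $\Omega$ restricts to (i) for $\Omega'$. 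The reverse shelling condition (ii) restricts even more directly, because $F_1$ is the last facet peeled off in the reverse order and its removal does not interfere with the preceding shelling steps. Applying the inductive hypothesis to $\Omega'$ expresses it as a composition of $t-1$ elementary pseudo-cobordisms, and prepending the one built from $F_1$ completes the decomposition.

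The step I expect to be most delicate is the verification that $\Omega'$ is genuinely a pseudo-cobordism between $\Delta_1$ and $\Delta'$ in the sense of Definition \ref{dfn:PseudoCobord}, and in particular that $\Delta_1$ (and by iteration every intermediate $\Delta_j$) is truly a simplicial complex rather than only a simplicial poset. Here the two-sided shelling is essential: condition (ii) guarantees that from the opposite side we can further attach $F_t,\ldots,F_2$ to land back on $\Delta'$, which forces $\Delta_1$ to be a closed combinatorial $d$-manifold (the target of a legitimate bistellar flip). Once this is in place, the pseudoboundary and degree bookkeeping are routine, and the proposition follows.
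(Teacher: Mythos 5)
Your overall strategy --- peeling off facets one at a time, showing each step is a bistellar flip, and invoking Lemma \ref{lem:GluingCobord} for the $(\Leftarrow)$ direction --- is the same as the paper's, and the ``crucial identity'' $\varphi(\Delta)\cup[\emptyset,F_1]=\varphi(\Delta_1)\cup[\emptyset,F_1]$ is correct. However, there is a real gap in the inductive step, and it is concentrated in exactly the place the paper needs its Lemma \ref{Aj-Bj-meet-join}.

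You define $A_1:=F_1\setminus B_1$ as the set-theoretic complement, but you never prove that this face has the properties needed to run the argument. Writing $S_1:=\bigcup_{1\le i<j}[\emptyset,F_i]\cup\varphi(\Delta)$ and $S_2:=\bigcup_{2\le i<j}[\emptyset,F_i]\cup\varphi(\Delta_1)$, your identity gives only $S_1=S_2\cup[\emptyset,F_1]$, hence $S_2\subseteq S_1$. To conclude that the restriction faces $r(F_j)$ agree for $j\ge 2$, you additionally need $[\emptyset,F_j]\cap[\emptyset,F_1]\subseteq S_2$, that is, every common face of $F_1$ and $F_j$ must already lie in $\varphi(\Delta_1)$ or in some earlier $[\emptyset,F_i]$ with $2\le i<j$. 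Since the faces of $[\emptyset,F_1]$ that leave $\Delta$ when passing to $\Delta_1$ are exactly $[A_1,F_1)$, the dangerous case is a face $\tau$ with $A_1\le\tau\le F_1$ and $\tau\le F_j$. Ruling this out requires knowing that $A_1$ coincides with the restriction face of $F_1$ in the \emph{reverse} shelling of $(\Omega,\varphi'(\Delta'))$: then the reverse shelling condition $[\emptyset,F_1]\setminus\bigl(\bigcup_{i\ge 2}[\emptyset,F_i]\cup\varphi'(\Delta')\bigr)=[A_1,F_1]$ says such a $\tau$ is not covered by any $F_i$ with $i\ge 2$, contradicting $\tau\le F_j$. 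That $F_1\setminus B_1$ equals the reverse restriction face is precisely the content of the paper's Lemma \ref{Aj-Bj-meet-join}, whose proof is a nontrivial argument using the pseudomanifold degree bound and the pseudoboundary conditions of Definition \ref{dfn:PseudoCobord}. Without it, your claim that the restriction faces agree is an unproved assertion, and the related fact that $A_1\ne\emptyset$ (needed even to speak of the bistellar flip) is also unverified --- the paper handles this via $\emptyset\in\varphi'(\Delta')$ and the same lemma.

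Your closing paragraph misidentifies the delicate point: once you know the flip parameters $(A_1,B_1)$ satisfy \eqref{eqn:BiMove1}, the fact that $\Delta_1$ is again a closed combinatorial $d$-manifold (hence a simplicial complex) is automatic, since bistellar flips preserve this class. The true obstruction is establishing the complementarity $A_1\wedge B_1=\emptyset$, $A_1\vee B_1=F_1$ from the two-sided shelling and pseudo-cobordism axioms, which is where the interaction between conditions (i) and (ii) of Definition \ref{def:shellablePseudoCobord} actually does work.
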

The proof of this proposition is based on a sequence of lemmas and is therefore postponed to the end of the section.  The main result of this section is the following direct corollary of Proposition~\ref{prp:ShellCompos}.

\begin{theorem}
\label{thm:BistCobord}
Let $\Delta$ and $\Delta'$ be closed combinatorial $d$-manifolds.  There exists a shellable pseudo-cobordism between $\Delta$ and $\Delta'$ if and only if $\Delta$ and $\Delta'$ are bistellar equivalent.
\end{theorem}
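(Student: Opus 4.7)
The plan is to deduce Theorem~\ref{thm:BistCobord} essentially immediately from Proposition~\ref{prp:ShellCompos}, combined with Lemma~\ref{lem:GluingCobord} and the observation at the start of Section~\ref{sec:ElemPseudo} that an elementary pseudo-cobordism between two closed combinatorial $d$-manifolds encodes precisely a single bistellar flip.

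For the ``only if'' direction, I would start from a shellable pseudo-cobordism $\Omega$ between $\Delta$ and $\Delta'$ and invoke Proposition~\ref{prp:ShellCompos} to express $\Omega$ as a composition of elementary pseudo-cobordisms $\Omega_1,\Omega_2,\ldots,\Omega_t$. Each $\Omega_i$ sits between two closed combinatorial $d$-manifolds $\Delta_{i-1}$ and $\Delta_i$ with $\Delta_0=\Delta$ and $\Delta_t=\Delta'$, and by the defining correspondence recalled in Section~\ref{sec:ElemPseudo} the transition $\Delta_{i-1}\to \Delta_i$ is exactly a bistellar flip. Chaining these flips together exhibits $\Delta$ and $\Delta'$ as bistellar equivalent.

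For the converse ``if'' direction, I would take any sequence of bistellar flips $\Delta=\Delta_0\to \Delta_1 \to \cdots \to \Delta_t = \Delta'$ witnessing bistellar equivalence and build, for each flip $\Delta_{i-1}\to\Delta_i$, the associated elementary pseudo-cobordism $\Omega_i$ by gluing the corresponding $(d+1)$-simplex on top of $\Delta_{i-1}$ along $\overline{A_i}\ast \partial\overline{B_i}$. Each $\Omega_i$ contains a unique $(d+1)$-face, so the trivial one-element ordering of that facet is simultaneously a shelling of $(\Omega_i,\varphi_i(\Delta_{i-1}))$ and of $(\Omega_i,\varphi'_i(\Delta_i))$, making $\Omega_i$ a shellable pseudo-cobordism in the sense of Definition~\ref{def:shellablePseudoCobord}. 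Iteratively applying Lemma~\ref{lem:GluingCobord} to glue $\Omega_1,\ldots,\Omega_t$ along the shared intermediate manifolds then produces a shellable pseudo-cobordism between $\Delta$ and $\Delta'$.

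The substantive content lives inside Proposition~\ref{prp:ShellCompos}, whose proof is developed separately through the lemmas that precede it; once that proposition is taken as given, Theorem~\ref{thm:BistCobord} involves no real obstacle beyond the bookkeeping sketched above. In particular, the main step I expect to require care is not in Theorem~\ref{thm:BistCobord} itself but in ensuring, when invoking Proposition~\ref{prp:ShellCompos}, that the intermediate ``slices'' obtained at each step of the shelling really are closed combinatorial $d$-manifolds, so that the composition-of-flips interpretation is legitimate; this however is packaged into the statement of that proposition.
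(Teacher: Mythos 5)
Your proposal is correct and matches the paper's argument: both directions are handled exactly as in the paper's proof of Theorem~\ref{thm:BistCobord}, deducing the forward implication from Proposition~\ref{prp:ShellCompos} together with the elementary-pseudo-cobordism-equals-bistellar-flip correspondence, and the converse by composing elementary pseudo-cobordisms via Lemma~\ref{lem:GluingCobord}. Your write-up is a somewhat more explicit version of the same short argument.
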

\begin{proof}
Every shellable pseudo-cobordism is a composition of elementary pseudo-cobordisms. At the same time, two complexes related by an elementary pseudo-cobordism differ by a bistellar flip.  Hence the ends of a shellable pseudo-cobordism are bistellarly equivalent. In the opposite direction, a sequence of bistellar flips produces a sequence of elementary pseudo-cobordisms, which can be composed to produce a shellable pseudo-cobordism.
\end{proof}

Let $\Omega$ be a shellable pseudo-cobordism between closed combinatorial $d$-manifolds $\Delta$ and $\Delta'$, with shelling order $F_1, \ldots, F_t$. 
For $0 \leq j \leq t$, define
\[
\Omega_j:= \bigcup_{i \leq j} [\emptyset, F_i] \cup \varphi(\Delta),
 \qquad
\Omega'_j:= \bigcup_{k > j} [\emptyset, F_k] \cup \varphi'(\Delta'),
\qquad \mbox{and} \qquad
\Delta_j := \Omega_j \cap \Omega'_j.
\]
In particular, $\Omega_0 = \varphi(\Delta)$, $\Omega'_t = \varphi'(\Delta')$, and $\Omega_t = \Omega'_0 = \Omega$, which implies $\Delta_0 = \phi(\Delta)$ and $\Delta_t = \phi'(\Delta')$. Our goal will be to show that $\Omega$ is a composition of elementary pseudo-cobordisms between $\Delta_{j-1}$ and $\Delta_j$ for $j = 1, \ldots, t$.

Let $B_j$ be the restriction face of facet $F_j$ in the shelling order on $(\Omega, \varphi(\Delta))$ and let $A_j$ be the restriction face of facet $F_j$ in the shelling order on $(\Omega, \varphi'(\Delta'))$.  More concretely, for all $1 \leq j \leq t$, $A_j$ and $B_j$ are the unique faces such that
\[
[\emptyset, F_j] \setminus \Omega_{j-1} = [B_j, F_j] \qquad
\mbox{and} \qquad
[\emptyset, F_j] \setminus \Omega'_j = [A_j, F_j].
\]

\begin{lemma} \label{Aj-Bj-meet-join}
For all $1 \le j \le t$, $\emptyset$ is the greatest lower bound of $A_j$ and $B_j$, and $F_j$ is the least upper bound of $A_j$ and $B_j$ in $[\emptyset, F_j]$.  
\end{lemma}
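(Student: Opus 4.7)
The plan is to apply Lemma \ref{alt-def-rel-shelling} in both shellings of $(\Omega, \varphi(\Delta))$ and $(\Omega, \varphi'(\Delta'))$, and translate everything into statements about the $d$-face $F_j \setminus v$ opposite to a vertex $v \le F_j$. Concretely, Lemma \ref{alt-def-rel-shelling} gives
\[
v \le B_j \iff F_j \setminus v \in \Omega_{j-1}, \qquad v \le A_j \iff F_j \setminus v \in \Omega'_j,
\]
so proving $A_j \wedge B_j = \emptyset$ amounts to showing that no $d$-face $F_j \setminus v$ lies in both $\Omega_{j-1}$ and $\Omega'_j$, while $A_j \vee B_j = F_j$ amounts to showing every $F_j \setminus v$ lies in $\Omega_{j-1}$ or in $\Omega'_j$.

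Next, I would stratify by how many $(d+1)$-faces of $\Omega$ contain the $d$-face $\tau := F_j \setminus v$; since $\Omega$ is a nonpure pseudomanifold and $F_j$ already contains $\tau$, there are only two possibilities. If $\tau$ lies in exactly one $(d+1)$-face (namely $F_j$ itself), then $\tau$ is on the pseudoboundary, so $\tau \in \varphi(\Delta) \cup \varphi'(\Delta')$; moreover condition (ii) in Definition \ref{dfn:PseudoCobord} forces this to be an exclusive-or, because a $d$-face that lies in $\varphi(\Delta) \cap \varphi'(\Delta')$ is contained in no $(d+1)$-face. If instead $\tau$ lies in a second facet $F_\ell$ with $\ell \neq j$, then $\tau$ is not on the pseudoboundary, so $\tau \notin \varphi(\Delta) \cup \varphi'(\Delta')$, and whether $\tau \in \Omega_{j-1}$ or $\tau \in \Omega'_j$ is controlled entirely by the sign of $\ell - j$.

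From this dichotomy, $A_j \wedge B_j = \emptyset$ follows immediately: in the first case $\tau$ lies in exactly one of $\varphi(\Delta)$, $\varphi'(\Delta')$ and does not lie in any other $F_i$, so it cannot simultaneously belong to $\Omega_{j-1}$ and $\Omega'_j$; in the second case $\tau \notin \varphi(\Delta)\cup \varphi'(\Delta')$ and its unique other containing facet $F_\ell$ places it in exactly one of $\Omega_{j-1}, \Omega'_j$ according to whether $\ell < j$ or $\ell > j$. Likewise $A_j \vee B_j = F_j$ follows because the same case analysis shows every $\tau = F_j \setminus v$ lies in $\Omega_{j-1} \cup \Omega'_j$, hence every vertex $v \le F_j$ satisfies $v \le A_j$ or $v \le B_j$; combined with $A_j, B_j \le F_j$ this forces the join to equal $F_j$ in the Boolean lattice $[\emptyset,F_j]$.

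The main subtlety will be keeping the bookkeeping clean between the two kinds of "boundary" behavior: being a $d$-face of $\varphi(\Delta)$ versus being a $d$-face shared between two facets $F_i, F_j$. The key input that lets this case analysis close is precisely the pseudo-cobordism axiom that $\varphi(\Delta) \cap \varphi'(\Delta')$ consists of degree-$0$ $d$-faces while $\widetilde{\partial}\Omega \setminus (\varphi(\Delta)\cap\varphi'(\Delta'))$ consists of degree-$1$ $d$-faces; without this precise mutual-exclusivity, nothing would rule out $v$ contributing to both $A_j$ and $B_j$.
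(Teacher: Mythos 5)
Your proof is correct and takes essentially the same route as the paper's: both start by applying Lemma \ref{alt-def-rel-shelling} to rewrite $v\le B_j$ and $v\le A_j$ as $F_j\setminus v\in\Omega_{j-1}$ and $F_j\setminus v\in\Omega'_j$, and then exploit the degree constraints coming from $\Omega$ being a nonpure pseudomanifold together with the two conditions in Definition \ref{dfn:PseudoCobord}. Your organization by stratifying on $\deg_\Omega(F_j\setminus v)\in\{1,2\}$ is a mild repackaging of the paper's two separate proofs by contradiction, but the underlying ideas and the crucial use of the pseudo-cobordism axioms are the same.
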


\begin{proof}
We will write $\text{glb}(A_j,B_j)$ and $\text{lub}(A_j,B_j)$ respectively to denote the greatest lower bound and least upper bound of $A_j$ and $B_j$ in $[\emptyset,F_j]$. Note that $\text{glb}(A_j,B_j)$ and $\text{lub}(A_j,B_j)$ in $[\emptyset, F_j]$ exist because $[\emptyset, F_j]$ is isomorphic to a Boolean lattice. Furthermore, by Lemma \ref{alt-def-rel-shelling},
\[
B_j = \{v \le F_j \mid F_j \setminus v_j \in \Omega_{j-1}\} \qquad \text{and} \qquad A_j = \{v \le F_j \mid F_j \setminus v_j \in \Omega'_j\}.
\]

First we show that $\text{glb}(A_j,B_j) = \emptyset$.  Suppose instead that there exists a vertex $v \leq F_j$ such that $v \leq A_j$ and $v \leq B_j$, that is
\[
F_j \setminus v \in \Omega_{j-1} \qquad
\text{and} \qquad
 F_j \setminus v \in \Omega'_j.
\]
Since $\Omega_{j-1}$ and $\Omega'_j$ have no common $(d+1)$-faces, and since $F_j \setminus v$ is covered by the $(d+1)$-face $F_j \notin \Omega_{j-1} \cup \Omega'_j$, we infer that
\[
\deg_{\Omega} (F_j \setminus v) = \deg_{\Omega_{j-1}} (F_j \setminus v) + \deg_{\Omega'_j} (F_j \setminus v) + 1.
\]
Also, since $\Omega$ is a nonpure pseudomanifold,  $\deg_{\Omega}(F_j \setminus v) \le 2$. Thus, it follows that the degree of $F_j \setminus v$ in at least one of the posets $\Omega_{j-1}$, $\Omega'_j$ must be equal to zero. Without loss of generality, assume $\deg_{\Omega_{j-1}}(F_j \setminus v) = 0$. Since $F_j \setminus v \in \Omega_{j-1}$, we conclude that $F_j \setminus v$ belongs to $\phi(\Delta)$. The fact that $\phi(\Delta)$ is a part of the pseudoboundary of $\Omega$, then yields that $\deg_\Omega(F_j \setminus v) \le 1$, which, in turn, implies that the degree of $F_j \setminus v$ in $\Omega'_j$ also vanishes. By the second condition in Definition \ref{dfn:PseudoCobord} we then obtain that $\deg_{\Omega}(F_j \setminus v) = 0$. This however contradicts the assumption that $F_j \setminus v \le F_j$.

Next, we show that $F_j$ is the least upper bound of $A_j$ and $B_j$ in $[\emptyset, F_j]$.  Suppose instead that there exists $v \leq F_j$ such that $v \not\leq A_j$ and $v \not\leq B_j$.
Then
\[
F_j \setminus v \notin \Omega_{j-1} \qquad
\text{and} \qquad
 F_j \setminus v \notin \Omega'_j,
\]
which implies that $\deg_\Omega(F_j \setminus v) = 1$. Therefore, either $F_j \setminus v$ belongs to $\phi(\Delta)$ or it belongs to $\phi'(\Delta')$; consequently $F_j \setminus v$ is an element of $\Omega_{j-1}$ or $\Omega'_j$. This contradicts our assumption.\end{proof}

As a consequence of Lemma \ref{Aj-Bj-meet-join}, note that the faces $A_j$ and $B_j$ are proper and nonempty faces of $F_j$.  Indeed, $A_j = \emptyset$ if and only if $B_j = F_j$, so it suffices to show that $A_j \neq \emptyset$.  This follows from the definition of $A_j$ since $\emptyset \in \varphi'(\Delta')$.  

\begin{lemma} \label{Aj-Bj-in-Delta}
For all $1 \leq j \leq t$, $A_j \in \Delta_{j-1} \setminus \Delta_j$.
\end{lemma}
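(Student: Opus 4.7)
The plan is to verify the two conditions separately: $A_j \notin \Delta_j$ and $A_j \in \Delta_{j-1}$. The non-membership $A_j \notin \Delta_j$ is immediate from the very definition of $A_j$: since $A_j \in [A_j, F_j] = [\emptyset, F_j] \setminus \Omega'_j$, we have $A_j \notin \Omega'_j$, and hence $A_j \notin \Delta_j = \Omega_j \cap \Omega'_j$. Half of the membership statement is equally easy: $A_j \leq F_j$ and $\Omega'_{j-1} \supseteq [\emptyset, F_j]$ give $A_j \in \Omega'_{j-1}$. So the substantive task is to prove $A_j \in \Omega_{j-1}$.

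To do this I would exploit that $A_j$ is a proper face of $F_j$ (by the remark following Lemma \ref{Aj-Bj-meet-join}), which guarantees the existence of a vertex $w \leq F_j$ with $w \not\leq A_j$. Set $\sigma := F_j \setminus w$; this is a $d$-face with $A_j \leq \sigma \leq F_j$. Applying Lemma \ref{alt-def-rel-shelling} to the reversed shelling $F_t, F_{t-1}, \ldots, F_1$ of $(\Omega, \varphi'(\Delta'))$, the hypothesis $w \not\leq A_j$ translates to $\sigma \notin \Omega'_j$. Since $\sigma$ is covered by $F_j$ we have $\deg_\Omega(\sigma) \geq 1$, while $\deg_\Omega(\sigma) \leq 2$ because $\Omega$ is a nonpure pseudomanifold. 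I would then split into the two resulting cases: if $\deg_\Omega(\sigma) = 1$, then $\sigma$ lies in $\widetilde{\partial}\Omega = \varphi(\Delta) \cup \varphi'(\Delta')$, and since $\varphi'(\Delta') \subseteq \Omega'_j$ and $\sigma \notin \Omega'_j$, necessarily $\sigma \in \varphi(\Delta) \subseteq \Omega_{j-1}$; if $\deg_\Omega(\sigma) = 2$, the other facet of $\Omega$ containing $\sigma$ is some $F_i$ with $i \neq j$, and $\sigma \notin \Omega'_j$ rules out $i > j$, forcing $i < j$ and hence $\sigma \in [\emptyset, F_i] \subseteq \Omega_{j-1}$. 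In either case $\sigma \in \Omega_{j-1}$, and since $A_j \leq \sigma$ we conclude $A_j \in \Omega_{j-1}$.

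Combining the two halves gives $A_j \in \Omega_{j-1} \cap \Omega'_{j-1} = \Delta_{j-1}$, together with $A_j \notin \Delta_j$, which is exactly the claim. The only real obstacle is locating the correct $d$-face $\sigma$ that witnesses the presence of $A_j$ in $\Omega_{j-1}$; once the choice $\sigma = F_j \setminus w$ with $w \not\leq A_j$ is made, Lemma \ref{alt-def-rel-shelling} and the two defining properties of a pseudo-cobordism in Definition \ref{dfn:PseudoCobord} handle the rest via a routine degree count.
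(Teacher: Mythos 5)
Your proof is correct, and the reductions to $A_j \notin \Delta_j$ and $A_j \in \Omega'_{j-1}$ match the paper exactly. The interesting divergence is in how you establish $A_j \in \Omega_{j-1}$. The paper does this in one line: by the shelling definition $[\emptyset, F_j] \setminus \Omega_{j-1} = [B_j, F_j]$, and $A_j \notin [B_j, F_j]$ because $B_j \neq \emptyset$ and $A_j \wedge B_j = \emptyset$ (both already supplied by Lemma \ref{Aj-Bj-meet-join} and the remark following it). You instead pick a vertex $w \leq F_j$ with $w \not\leq A_j$, set $\sigma = F_j \setminus w$, use Lemma \ref{alt-def-rel-shelling} on the reversed shelling to get $\sigma \notin \Omega'_j$, and then run a degree count on $\sigma$ ($1 \leq \deg_\Omega \sigma \leq 2$) splitting by whether $\sigma$ lies on the pseudoboundary or in some earlier $[\emptyset, F_i]$, both cases landing in $\Omega_{j-1}$, and finally pass down to $A_j$ using that $\Omega_{j-1}$ is a lower order ideal. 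This is valid and self-contained, but it essentially re-runs the degree-counting machinery that the paper already used to prove Lemma \ref{Aj-Bj-meet-join}; invoking that lemma directly, as the paper does, avoids the detour through the auxiliary $d$-face $\sigma$ and Lemma \ref{alt-def-rel-shelling}. In short: correct, but a longer route to the same place — the paper's argument is the ``use what you just proved'' version of yours.
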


\begin{proof}
Observe that by definition of $A_j$, $A_j \notin \Omega'_j$, and hence $A_j \notin \Delta_j$. On the other hand, $A_j \in \Omega'_{j-1}$ because $A_j \in [0, F_j] \subset \Omega'_{j-1}$, and $A_j \in \Omega_{j-1}$ because
\[
[B_j, F_j] = [0, F_j] \setminus \Omega_{j-1}, \qquad \text{while} \quad A_j \in [0,F_j] \quad \text{and} \quad A_j \notin [B_j,F_j].
\]
(The latter holds since $B_j \ne \emptyset$ and $A_j \wedge B_j = \emptyset$.) Hence $A_j \in \Omega_{j-1} \cap \Omega'_{j-1} = \Delta_{j-1}$. \end{proof}

\begin{lemma} \label{Aj-Bj-link}
For all $1 \leq j \leq t$, $\lk_{\Delta_{j-1}}(A_j) = [A_j, F_j)$.
\end{lemma}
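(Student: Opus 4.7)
The plan is to prove the two inclusions $[A_j, F_j) \subseteq \lk_{\Delta_{j-1}}(A_j)$ and $\lk_{\Delta_{j-1}}(A_j) \subseteq [A_j, F_j)$ separately, exploiting the dual roles of $B_j$ and $A_j$ as restriction faces of $F_j$ in the forward shelling of $(\Omega, \varphi(\Delta))$ and the reverse shelling of $(\Omega, \varphi'(\Delta'))$, together with Lemma \ref{Aj-Bj-meet-join}. Throughout I would work from the two defining identities $[\emptyset, F_j] \setminus \Omega_{j-1} = [B_j, F_j]$ and $[\emptyset, F_j] \setminus \Omega'_j = [A_j, F_j]$.

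For the inclusion $[A_j, F_j) \subseteq \lk_{\Delta_{j-1}}(A_j)$, I would fix $\sigma$ with $A_j \leq \sigma < F_j$ and verify that $\sigma \in \Omega_{j-1} \cap \Omega'_{j-1} = \Delta_{j-1}$. Since $F_j \in \Omega'_{j-1}$, the whole interval $[\emptyset, F_j]$ lies in $\Omega'_{j-1}$, so $\sigma \in \Omega'_{j-1}$. For $\Omega_{j-1}$ it suffices to check that $\sigma \notin [B_j, F_j]$; if instead $B_j \leq \sigma$, then combining this with $A_j \leq \sigma$ and Lemma \ref{Aj-Bj-meet-join} would give $F_j = A_j \vee B_j \leq \sigma$, contradicting $\sigma < F_j$.

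For the reverse inclusion, the key observation is that $\Omega'_j$ is a \emph{lower order ideal} of $\Omega$: it is the union of the principal ideals $[\emptyset, F_k]$ for $k > j$ and of the simplicial subcomplex $\varphi'(\Delta')$, each of which is closed under taking subfaces. By the definition of $A_j$ as restriction face in the reverse shelling, $[\emptyset, F_j] \setminus \Omega'_j = [A_j, F_j]$, and in particular $A_j \notin \Omega'_j$. Now take $\sigma \in \lk_{\Delta_{j-1}}(A_j)$, so $\sigma \in \Omega'_{j-1}$ and $A_j \leq \sigma$. Were $\sigma$ to lie in $\Omega'_j$, the lower-ideal property would force $A_j \in \Omega'_j$, a contradiction; hence $\sigma \in \Omega'_{j-1} \setminus \Omega'_j = [A_j, F_j]$. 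To exclude $\sigma = F_j$, note that $F_j \notin \Omega_{j-1}$, because its facets are $F_1, \ldots, F_{j-1}$ along with $\varphi(\Delta)$, which is only $d$-dimensional; consequently $F_j \notin \Delta_{j-1}$ and $\sigma \in [A_j, F_j)$.

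The argument is essentially bookkeeping, and I do not anticipate a serious obstacle. The main point is to recognize that the restriction face $A_j$ of the reverse shelling is precisely the ``minimum new element'' of $[\emptyset, F_j]$ in $\Omega'_{j-1}$, which together with the lower-ideal property of $\Omega'_j$ forces every face of $\Omega'_{j-1}$ containing $A_j$ into the interval $[A_j, F_j]$; intersecting with $\Omega_{j-1}$ then removes only the top element $F_j$.
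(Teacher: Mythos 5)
Your proof is correct and follows essentially the same route as the paper's: the forward inclusion uses Lemma \ref{Aj-Bj-meet-join} to show a face in $[A_j,F_j)$ cannot contain $B_j$ and hence lies in $\Omega_{j-1}$ (and it lies in $\Omega'_{j-1}$ because $[\emptyset,F_j]\subset\Omega'_{j-1}$), while the reverse inclusion uses $A_j\notin\Omega'_j$ together with the fact that $\Omega'_j$ is closed under subfaces to place $\sigma$ in $\Omega'_{j-1}\setminus\Omega'_j=[A_j,F_j]$, then excludes $\sigma=F_j$ via $F_j\notin\Delta_{j-1}$. Your only departures are cosmetic: you explicitly flag that $\Omega'_j$ is a lower order ideal (the paper uses this fact silently), and you spell out why $F_j\notin\Omega_{j-1}$ (the paper asserts $F_j\notin\Delta_{j-1}$ without elaboration).
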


\begin{remark}
Recall that the link of a face in a simplicial poset is the upper order ideal generated by that face.  Thus $\lk_{\Delta_{j-1}}(A_j)$ is the set of all faces in $\Delta_{j-1}$ that contain $A_j$.
\end{remark}

\begin{proof}
First let $\tau \in [A_j,F_j)$.
Then  $\tau \in \Omega'_{j-1}$, since $[\emptyset, F_j] \subset \Omega'_{j-1}$. Also $\tau \in \Omega_{j-1}$ since
\[
[B_j,F_j] = [\emptyset, F_j] \setminus \Omega_{j-1} \quad \text{and} \quad B_j \not< \tau.
\]
(The latter holds because, by Lemma \ref{Aj-Bj-meet-join}, $\tau$ cannot be an upper bound for $A_j$ and $B_j$.)
Therefore, $\tau\in\Delta_{j-1}$. As $A_j\leq\tau$, it follows that $\tau\in\lk_{\Delta_{j-1}}(A_j)$.

Conversely, suppose $\sigma \in \lk_{\Delta_{j-1}}(A_j)$.
This means that $\sigma \in \Delta_{j-1}$ and $\sigma \ge A_j$. Our goal is to show that $\sigma < F_j$. First, $\sigma \in \Omega'_{j-1}$, because $\Delta_{j-1} \subset \Omega'_{j-1}$. On the other hand, $\sigma \notin \Omega'_j$ because $\sigma \ge A_j \notin \Omega'_j$. Since $\Omega'_{j-1} = [\emptyset, F_j] \cup \Omega'_j$, it follows that $\sigma \in [\emptyset, F_j]$. Also $\sigma \ne F_j$ because $F_j \notin \Delta_{j-1}$. Thus $\sigma < F_j$, and we are done. \end{proof}

\begin{lemma} \label{Aj-Bj-flip}
For all $1 \leq j \leq t$, $$\Delta_j = (\Delta_{j-1} \setminus A_j) \cup [B_j,F_j).$$
\end{lemma}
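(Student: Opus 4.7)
The plan is to unfold the definitions of $\Delta_j$ and $\Delta_{j-1}$ and reduce the set-theoretic identity to two easy facts about Boolean lattices, using the previous three lemmas.

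First I would rewrite the passage $\Omega_{j-1} \rightsquigarrow \Omega_j$ and $\Omega'_{j-1} \rightsquigarrow \Omega'_j$ in terms of $A_j$ and $B_j$. By the very definition of the restriction faces we have
\[
\Omega_j = \Omega_{j-1} \cup [B_j, F_j]
\qquad\text{and}\qquad
\Omega'_j = \Omega'_{j-1} \setminus [A_j, F_j],
\]
where the second equation follows since $[\emptyset, F_j]\setminus\Omega'_j = [A_j,F_j]$ and $[\emptyset,F_j]\subset \Omega'_{j-1}$. Taking the intersection and distributing,
\[
\Delta_j = \Omega_j \cap \Omega'_j
= \bigl( \Delta_{j-1} \setminus [A_j, F_j] \bigr)
\cup \bigl( [B_j, F_j] \setminus [A_j, F_j] \bigr),
\]
using also that $[B_j,F_j]\subset [\emptyset,F_j]\subset\Omega'_{j-1}$ to simplify the second summand.

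Next I would identify each of the two pieces. For the first piece, Lemma \ref{Aj-Bj-link} gives $\lk_{\Delta_{j-1}}(A_j)=[A_j,F_j)$, which means that the faces of $\Delta_{j-1}$ containing $A_j$ are exactly the elements of $[A_j,F_j)$; since $F_j\notin\Delta_{j-1}$, this yields $\Delta_{j-1}\cap [A_j,F_j]=[A_j,F_j)$, so removing $[A_j,F_j]$ from $\Delta_{j-1}$ is the same as removing all faces $\sigma \geq A_j$, i.e.\
\[
\Delta_{j-1}\setminus [A_j,F_j] \;=\; \Delta_{j-1}\setminus A_j.
\]
For the second piece, Lemma \ref{Aj-Bj-meet-join} says that $A_j$ and $B_j$ have join $F_j$ inside the Boolean lattice $[\emptyset, F_j]$, so $[A_j,F_j]\cap [B_j,F_j]=[A_j\vee B_j,F_j]=\{F_j\}$, giving
\[
[B_j,F_j]\setminus [A_j,F_j]\;=\;[B_j,F_j).
\]

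Combining the two pieces produces exactly $\Delta_j=(\Delta_{j-1}\setminus A_j)\cup [B_j,F_j)$, as desired. I do not anticipate a substantial obstacle: once one has Lemmas \ref{Aj-Bj-meet-join} and \ref{Aj-Bj-link} in hand, everything reduces to elementary Boolean-lattice manipulations, and the only point to watch carefully is that $F_j\notin\Delta_{j-1}$ (so that the closed interval $[A_j,F_j]$ intersects $\Delta_{j-1}$ in the half-open interval $[A_j,F_j)$).
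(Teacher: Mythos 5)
Your proof is correct and follows essentially the same computation as the paper: unfold the shelling definitions to get $\Omega_j = \Omega_{j-1}\sqcup[B_j,F_j]$ and $\Omega'_{j-1}=\Omega'_j\sqcup[A_j,F_j]$, intersect, and simplify using the lub/glb facts. In fact you are a bit more explicit than the paper, which cites only Lemma \ref{Aj-Bj-meet-join} in its final step even though identifying $\Delta_{j-1}\setminus[A_j,F_j]$ with the deletion $\Delta_{j-1}\setminus A_j$ also requires the characterization $\lk_{\Delta_{j-1}}(A_j)=[A_j,F_j)$ from Lemma \ref{Aj-Bj-link}, which you correctly invoke.
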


\begin{proof}
By the definition of $\Omega$ being shellable, 
$$\Omega_j = \Omega_{j-1} \cup [\emptyset, F_j] = \Omega_{j-1} \sqcup [B_j,F_j] \quad
\mbox{and} \quad
\Omega'_{j-1} = \Omega'_{j} \cup [\emptyset, F_j] = \Omega'_{j} \sqcup [A_j,F_j].$$  (Here, $\sqcup$ denotes disjoint union.)
Thus
\begin{multline*}
\Delta_j = \Omega_j \cap \Omega'_j = (\Omega_{j-1} \sqcup [B_j,F_j]) \cap (\Omega'_{j-1} \setminus [A_j,F_j]) \\
= \big((\Omega_{j-1} \cap \Omega'_{j-1}) \cup [B_j,F_j]\big) \setminus [A_j, F_j] = (\Delta_{j-1} \setminus A_j) \cup [B_j,F_j),
\end{multline*}
where we have used Lemma \ref{Aj-Bj-meet-join} and the fact that $F_j \in \Omega'_{j-1}$.
\end{proof}

\begin{proof}[Proof of Proposition \ref{prp:ShellCompos}]
An elementary pseudo-cobordism is shellable by definition. Hence a composition of elementary pseudo-cobordisms is shellable by Lemma \ref{lem:GluingCobord}.

Conversely, assume that $\Omega$ is a shellable pseudo-cobordism.
Earlier in this section we defined subposets $\Delta_j \subset \Omega_j \subset \Omega$.
We now use induction on $j$ to show that $\Delta_j$ is a simplicial complex, that
$\Omega_j$ is a pseudo-cobordism between $\Delta$ and $\Delta_j$, and that it can be represented as a composition of elementary ones. Assume this is true for $j-1$. By  Lemmas \ref{Aj-Bj-link} and \ref{Aj-Bj-meet-join}, $\lk_{\Delta_{j-1}}(A_j) = \partial\overline{B_j}$,  where $\Delta_{j-1}$ is viewed as a simplicial complex. 
Together with Lemma \ref{Aj-Bj-flip} this implies that $\Delta_j$ is obtained from $\Delta_{j-1}$ by  a bistellar flip. In particular, $\Delta_j$ is a simplicial complex. The elementary pseudo-cobordism corresponding to this flip is $\Delta_{j-1} \cup [0,F_j]$, and its composition with the pseudo-cobordism $\Omega_{j-1}$ is $\Omega_j = \Omega_{j-1} \cup [0,F_j]$. Since, by the inductive assumption, $\Omega_{j-1}$ is a composition of elementary pseudo-cobordisms, so is $\Omega_j$. For $j=t$ we obtain a decomposition of $\Omega$ into elementary pseudo-cobordisms. \end{proof}

%%%%%%%%%%%%%%%%%%%
%%%%%%%%%%%%%%%%%%%
\section{Balanced PL manifolds}
%%%%%%%%%%%%%%%%%%%
%%%%%%%%%%%%%%%%%%%

In this section, we prove our main result, Theorem \ref{main-thm}, asserting that two balanced closed combinatorial $d$-manifolds $\Delta$ and $\Gamma$ are PL homeomorphic if and only if there is a sequence of cross-flips that transforms $\Delta$ into $\Gamma$.  We also prove Theorem \ref{color-thm} asserting that when $m \geq d+2$, two closed combinatorial $d$-manifolds that are properly $m$-colored can be connected through a sequence of bistellar flips so that the colors of the vertices are preserved under every flip.  The proofs of these results require a number of intermediate results, and so we begin this section with an outline of the proofs, which we hope will help to motivate the results in the remainder of this section.

\subsection{Proof of Theorems \ref{color-thm} and \ref{main-thm}} \label{Section:proof-of-main-theorem}

Let $m \geq d+2$, and suppose $\Delta$ and $\Gamma$ are properly $m$-colored closed combinatorial $d$-manifolds that are PL homeomorphic.  Constructing a sequence of bistellar flips that connects $\Delta$ to $\Gamma$ and preserves their proper $m$-colorings requires two steps.
\begin{enumerate}
\item First, we show that it is possible to construct a shellable pseudo-cobordism $(\Omega, \varphi, \psi)$ between $\Delta$ and $\Gamma$ such that $\varphi(\Delta) \cap \psi(\Gamma) = \{\emptyset\}$. This is Corollary \ref{step2} below.  
\item \label{Step2} Next, since $\varphi(\Delta) \cap \psi(\Gamma) = \{\emptyset\}$, the subcomplex $\varphi(\Delta) \cup \psi(\Gamma) \subseteq \Omega$ is properly $m$-colorable.  By Corollary \ref{balanced->balanced}, it is possible to subdivide $\Omega$ away from the subcomplex $\varphi(\Delta) \cup \psi(\Gamma)$ to obtain a new pseudo-cobordism $(\Omega', \varphi', \psi')$ between $\Delta$ and $\Gamma$ such that $\Omega'$ is properly $m$-colorable.  We show in Proposition \ref{subdivision-preserves-shellability} below that $\Omega'$ is also a shellable pseudo-cobordism between $\Delta$ and $\Gamma$.
\item Therefore, the shelling on $(\Omega',\Delta)$ encodes a sequence of bistellar flips from $\Delta$ to $\Gamma$ such that each intermediate complex is properly $m$-colored.
\end{enumerate}

When $\Delta$ and $\Gamma$ are balanced, the proof of Theorem \ref{main-thm} follows along similar lines.
\begin{enumerate}
\item  As above, we begin by constructing a shellable pseudo-cobordism $(\Omega, \varphi, \psi)$ between $\Delta$ and $\Gamma$ such that $\varphi(\Delta) \cap \psi(\Gamma) = \{\emptyset\}$.

\item Next, since $\varphi(\Delta) \cap \psi(\Gamma) = \{\emptyset\}$, the subcomplex $\varphi(\Delta) \cup \psi(\Gamma)$ is properly $(d+1)$-colorable.  Again, by Corollary \ref{balanced->balanced}, we may subdivide $\Omega$ away from $\varphi(\Delta) \cup \psi(\Gamma)$ to obtain a new pseudo-cobordism $(\Omega', \varphi', \psi')$ between $\Delta$ and $\Gamma$ such that $\Omega'$ is also balanced.  We show in Proposition \ref{subdivision-preserves-shellability} below that $\Omega'$ is also a shellable pseudo-cobordism between $\Delta$ and $\Gamma$.

\item To complete the proof, we apply the diamond operation to $\Omega'$ and proceed as in the proof of Theorem \ref{cross-poly->spheres}.  The complex $\Diamond(\Omega')$ is a cross-polytopal complex; the shelling order on the facets of $\Omega'$ induces an order on the cross-polytopal cells of $\Diamond(\Omega')$; and this order encodes a sequence of cross-flips between $\Delta$ and $\Gamma$.
\end{enumerate}

\subsection{Constructing a pseudo-cobordism with disjoint ends}

Our first goal is to describe an algorithm that takes as its input a pair of closed combinatorial $d$-manifolds, $\Delta$ and $\Gamma$, that are PL homeomorphic and outputs a shellable pseudo-cobordism, $(\Omega, \varphi, \varphi')$, from $\Delta$ to $\Gamma$ such that $\varphi(\Delta) \cap \varphi'(\Gamma) = \{\emptyset\}$.  We will view this pseudo-cobordism $\Omega$ as a buffer between $\Delta$ and $\Gamma$ that is a more appropriate analogue of a collar on $\Delta$ or an honest cobordism between $\Delta$ and $\Gamma$. 

We begin by connecting Pachner's result that a combinatorial sphere bounds a shellable ball (Theorem \ref{shellable-ball}) to our definition of shellable pseudo-cobordisms.   Suppose a simplicial complex $\mathcal{K}$ is a shellable ball with shelling order $F_1, \ldots, F_t$, and let $\mathcal{K}_i$ be the simplicial complex induced by the first $i$ facets in the shelling order.  It follows from a result of Danaraj and Klee \cite{Danaraj-Klee} 
that each of these $\mathcal{K}_i$ complexes is a ball.  Moreover, the boundary of each $\mathcal{K}_{i+1}$ is obtained from the boundary of $\mathcal{K}_i$ by a bistellar flip.  Therefore, this shelling sequence encodes the sequence of bistellar flips transforming the boundary of a simplex (i.e., the boundary of $\overline{F}_1$) to the boundary of $\mathcal{K}$. In fact, it follows from the next lemma that the simplicial complex $\mathcal{K} \setminus F_1$ ($\mathcal{K}$ with the facet $F_1$ removed, but its boundary still intact) is a shellable pseudo-cobordism between $\partial\overline{F}_1$ and $\partial\mathcal{K}$. 

\begin{lemma} \label{reverse-shelling-ball}
Let $\mathcal{K}$ be a simplicial complex. Assume further that $\mathcal{K}$ is a shellable ball with shelling order $F_1, F_2, \ldots, F_t$. Then $(\mathcal{K}, \partial\mathcal{K})$ is a shellable relative complex with shelling order $F_t, F_{t-1},\ldots, F_1$. 
\end{lemma}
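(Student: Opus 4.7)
The plan is to take $r'(F_j) := F_j \setminus r(F_j)$ as the restriction face in the reverse shelling, and to establish the required identity
\[
[\emptyset, F_j] \setminus \Bigl(\bigcup_{i > j}[\emptyset, F_i] \cup \partial\mathcal{K}\Bigr) = [F_j \setminus r(F_j), F_j] \quad \text{for every } j
\]
by reducing it to the following partition statement: the intervals $\{[F_j \setminus r(F_j), F_j]\}_{j=1}^{t}$ decompose $\mathcal{K} \setminus \partial\mathcal{K}$ disjointly, and a face $G$ falls into the $j$-th interval precisely when $F_j$ is the last facet (in the original shelling order) that contains $G$. From this partition the shelling identity is immediate, since $G \in [F_j \setminus r(F_j), F_j]$ iff $G \subseteq F_j$, $G \not\subseteq F_i$ for all $i > j$, and $G \notin \partial\mathcal{K}$.

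The proof rests on three classical ingredients about shellable complexes. \emph{(i)} For any face $G \in \mathcal{K}$, the link $\lk_\mathcal{K}(G)$ is shellable, with facets $\{F_i \setminus G : G \subseteq F_i\}$ listed in the induced order, and the restriction face of $F_i \setminus G$ in this induced shelling equals $r(F_i) \setminus G$. \emph{(ii)} By Danaraj--Klee, in a shelling of a shellable pseudomanifold the equality $r(F) = F$ never occurs when the pseudomanifold is a ball, and occurs exactly once -- at the last facet of the shelling -- when it is a sphere. \emph{(iii)} Since $\mathcal{K}$ is a combinatorial $d$-ball, $G \notin \partial\mathcal{K}$ iff $\lk_\mathcal{K}(G)$ is a combinatorial sphere, and $G \in \partial\mathcal{K}$ iff $\lk_\mathcal{K}(G)$ is a combinatorial ball.

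Fix $G \in \mathcal{K}$ and let $i_1 < \cdots < i_k$ be the indices with $G \subseteq F_{i_l}$. By \emph{(i)}, the condition $F_{i_l} \setminus r(F_{i_l}) \subseteq G$ is equivalent to $r(F_{i_l}) \setminus G = F_{i_l} \setminus G$, which says that $F_{i_l} \setminus G$ has its entire set as restriction face in the induced shelling of $\lk_\mathcal{K}(G)$. By \emph{(ii)}, this can happen only when $F_{i_l} \setminus G$ is the last facet of the induced shelling, i.e.\ $l = k$; and by \emph{(iii)}, in that case it actually happens iff $\lk_\mathcal{K}(G)$ is a sphere, iff $G \notin \partial\mathcal{K}$. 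Thus every interior $G$ lies in exactly one interval $[F_j \setminus r(F_j), F_j]$ (indexed by $j = \mathrm{last}(G) := i_k$) and every boundary $G$ in none, yielding the partition and hence the lemma.

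The principal step requiring care is fact \emph{(i)}, specifically the formula for the induced restriction. This reduces to a short set-theoretic observation: for $v \in F_i \setminus G$, the face $(F_i \setminus G) \setminus \{v\}$ lies in some earlier facet $F_{i'} \setminus G$ of the induced order iff $F_i \setminus \{v\} \subseteq F_{i'}$ for some $i' < i$ with $G \subseteq F_{i'}$. However, $G \subseteq F_i \setminus \{v\}$ is automatic whenever $v \notin G$, so $G \subseteq F_{i'}$ is implied by $F_i \setminus \{v\} \subseteq F_{i'}$, and the induced-restriction condition collapses to precisely the one defining $v \in r(F_i)$. With this in hand, \emph{(ii)} is the standard Danaraj--Klee characterization of shellable pseudomanifolds and \emph{(iii)} is the link description of boundary faces in a combinatorial manifold, so the argument is complete.
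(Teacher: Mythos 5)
Your proof is correct, but it follows a genuinely different route from the paper's. The paper proves the identity
$[\emptyset, F_j] \setminus \bigl( \bigcup_{i>j}[\emptyset, F_i] \cup \partial\mathcal{K} \bigr) = [A_j,F_j]$
(where $A_j = F_j \setminus r(F_j)$) by induction on $t$: it first verifies the $j=t$ case directly, observing that $\partial\overline{F}_t \cap \partial\mathcal{K}$ is exactly the set of faces of $\overline{F}_t$ not containing $A_t$, and then passes to the subcomplex $\mathcal{K}'$ generated by $F_1,\dots,F_{t-1}$, using the relation between $\partial\mathcal{K}$ and $\partial\mathcal{K}'$. Your argument instead reads off the identity from a structural description of each interior face: you restrict the shelling to the link $\lk_\mathcal{K}(G)$, compute the induced restriction faces $r(F_i)\setminus G$ (correctly observing that the implicit quantification over earlier facets of $\mathcal{K}$ coincides with the one in the link because $F_i\setminus v \subseteq F_{i'}$ already forces $G\subseteq F_{i'}$ when $v\notin G$), and then invoke the Danaraj--Klee dichotomy to show that $F_j\setminus r(F_j)\subseteq G$ forces $j$ to be the last facet containing $G$, and that this actually happens exactly when $\lk_\mathcal{K}(G)$ is a sphere, i.e., $G$ is interior. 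The paper's induction is more self-contained and matches the style of the surrounding shellability arguments; your proof leans on more machinery (link shellings with the precise formula for the induced restriction, the Danaraj--Klee characterization, and the link criterion for boundary faces) but in exchange gives a transparent combinatorial description of the reverse-shelling restriction intervals: $G\in[A_j,F_j]$ precisely when $G$ is interior and $F_j$ is the last facet in the original shelling containing $G$. Both establish the same partition of $\mathcal{K}\setminus\partial\mathcal{K}$ into the intervals $[A_j,F_j]$.
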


\begin{proof}
Let $B_j$ denote the restriction face for the $j^{\text{th}}$ facet in the given shelling order of $\mathcal{K}$, and $A_j$ the complementary face:
\[
[\emptyset, F_j] \setminus \bigcup_{i=1}^{j-1} [\emptyset, F_i] = [B_j, F_j] \quad \text{and} \quad A_j := F_j \setminus B_j.
\]
We claim that in the proposed shelling order on $(\mathcal{K}, \partial\mathcal{K})$, 
\begin{equation}\label{eqn:reverse-shelling}
[\emptyset, F_j] \setminus \Bigg( \bigcup_{i=j+1}^t [\emptyset, F_i] \cup \partial \mathcal{K} \Bigg) = [A_j,F_j].
\end{equation}

We prove the claim by induction on the number of facets of $\mathcal{K}$.  The result indeed holds for $t=1$, in which case $\mathcal{K} = \overline{F_1}$ is a simplex.

Now suppose $t>1$.  First, we show that Eq.~\eqref{eqn:reverse-shelling} holds when $j=t$.  Note that a face $\tau \subseteq F_t$ belongs to $\partial\mathcal{K}$ if and only if $\tau$ is contained in some facet of $\partial \overline{F}_t$ that contains $B_t$.  The subcomplex of $\partial \overline{F}_t$ induced by the faces containing $B_t$ is $\partial \overline{A}_t * \overline{B}_t$, and so $\tau \in \partial\overline{F}_t \cap \partial \mathcal{K}$ if and only if $A_t \not\subseteq \tau$.  Thus $[\emptyset, F_t] \setminus \partial \mathcal{K} = [A_t,F_t]$.

Next, let $\mathcal{K}'$ be the subcomplex of $\mathcal{K}$ induced by the facets $F_1, \ldots, F_{t-1}$. This $\mathcal{K}'$ is also a shellable ball.  Further, $\partial \mathcal{K}' = \left(\partial \mathcal{K} \setminus (\partial \overline{A}_t*\overline{B}_t)\right) \cup (\overline{A}_t * \partial\overline{B}_t)$.  For $j < t$, observe that
$$
[\emptyset, F_j] \setminus \Bigg( \bigcup_{i=j+1}^t [\emptyset, F_i] \cup \partial \mathcal{K} \Bigg)
=
[\emptyset, F_j] \setminus \Bigg( \bigcup_{i=j+1}^{t-1} [\emptyset, F_i] \cup \partial \mathcal{K}' \Bigg).
$$
This is because $$\bigcup_{i=j+1}^t [\emptyset, F_i] \cup \partial \mathcal{K} = \bigcup_{i=j+1}^{t-1} [\emptyset, F_i] \cup \partial \mathcal{K}' \sqcup [B_t,F_t],$$ but no faces in $[\emptyset, F_j]$ belong to $[B_t,F_t]$.  Then Eq.~\eqref{eqn:reverse-shelling} holds for index $j$ since the inductive hypothesis applies to $\mathcal{K}'$ and hence $[\emptyset, F_j] \setminus \left( \bigcup_{i=j+1}^{t-1} [\emptyset, F_i] \cup \partial \mathcal{K}' \right) = [A_j,F_j]$.
\end{proof}

This leads to the main result of this subsection, which allows us to construct a shellable pseudo-cobordism $\Omega$ between two PL homeomorphic closed combinatorial manifolds, say $\Delta$ and $\Gamma$, such that the images of $\Delta$ and $\Gamma$ in $\Omega$ are disjoint. 

\begin{lemma} \label{lem:eliminate-face}
Let $\Delta$ be a closed combinatorial $d$-manifold and let $\tau \in \Delta$.  There exists a closed combinatorial $d$-manifold $\Delta'$ and a $(d+1)$-dimensional nonpure pseudomanifold $\Omega$ such that:
\begin{enumerate}
\item $\Delta'$ is PL homeomorphic to $\Delta$,
\item $\Omega$ is a shellable pseudo-cobordism between $\Delta$ and $\Delta'$, and
\item $\tau \notin \Delta'$.
\end{enumerate}
\end{lemma}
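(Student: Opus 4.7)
The plan is to eliminate $\tau$ by attaching a single shellable ball to $\Delta$ that flips the star of $\tau$ to its combinatorial complement.

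Let $k=\dim\tau$. Since $\lk_\Delta(\tau)$ is a combinatorial $(d-k-1)$-sphere, Theorem~\ref{shellable-ball} produces a shellable $(d-k)$-ball $\mathcal{K}$ with $\partial\mathcal{K}=\lk_\Delta(\tau)$ such that $\lk_\Delta(\tau)$ is induced in $\mathcal{K}$, and we may assume $V(\mathcal{K})\setminus V(\lk_\Delta(\tau))$ is disjoint from $V(\Delta)$. Fix a shelling $G_1,\ldots,G_s$ of $\mathcal{K}$ with restriction face $B_j$ at $G_j$ and set $A_j:=G_j\setminus B_j$; by Lemma~\ref{reverse-shelling-ball}, the reverse order shells $(\mathcal{K},\partial\mathcal{K})$ with restriction $A_j$. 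Define
\[
\Omega:=\Delta\cup(\overline{\tau}*\mathcal{K}),\qquad
\Delta':=(\Delta\setminus\tau)\cup(\partial\overline{\tau}*\mathcal{K}),
\]
where the gluing is along $\overline{\tau}*\partial\mathcal{K}=\st_\Delta(\tau)$. The inducedness of $\lk_\Delta(\tau)$ in $\mathcal{K}$ gives $\Delta\cap(\overline{\tau}*\mathcal{K})=\overline{\tau}*\partial\mathcal{K}$, so the gluing is well-defined, and the natural inclusions $\varphi:\Delta\hookrightarrow\Omega$, $\varphi':\Delta'\hookrightarrow\Omega$ are simplicial embeddings.

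Properties (i) and (iii) are nearly immediate. The $(d+1)$-ball $\overline{\tau}*\mathcal{K}$ has its boundary $d$-sphere split into two $d$-balls, $\overline{\tau}*\partial\mathcal{K}$ and $\partial\overline{\tau}*\mathcal{K}$, meeting in $\partial\overline{\tau}*\partial\mathcal{K}$; hence $\Delta'$ is obtained from $\Delta$ by replacing the $d$-ball $\st_\Delta(\tau)$ with another $d$-ball of the same boundary, giving (i). For (iii), no face of $\Delta\setminus\tau$ contains $\tau$, while every face of $\partial\overline{\tau}*\mathcal{K}$ has its $\overline{\tau}$-factor a proper subset of $\tau$ and its $\mathcal{K}$-factor disjoint from $V(\tau)$.

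Property (ii) is the core content. A case analysis of $d$-faces of $\Omega$ --- splitting each as $\sigma_\tau\cup\sigma_G$ --- shows that $\Omega$ is a nonpure pseudomanifold and that $(\Omega,\varphi,\varphi')$ satisfies Definition~\ref{dfn:PseudoCobord}. For shellability, set $F_j:=\tau\cup G_{s-j+1}$. For any $\sigma\subseteq\tau\cup G_j$ decomposed as $\sigma_\tau\cup\sigma_G$, one reads off
\[
\sigma\in[\emptyset,\tau\cup G_i]\Leftrightarrow\sigma_G\subseteq G_i,\quad
\sigma\in\overline{\tau}*\partial\mathcal{K}\Leftrightarrow\sigma_G\in\partial\mathcal{K},\quad
\sigma\in\partial\overline{\tau}*\mathcal{K}\Leftrightarrow\sigma_\tau\subsetneq\tau.
\]
Consequently the shelling condition for $(\Omega,\varphi(\Delta))$ reduces to that of $(\mathcal{K},\partial\mathcal{K})$ (yielding restriction $A_{s-j+1}$ at $F_j$), while the shelling condition for $(\Omega,\varphi'(\Delta'))$ reduces to that of $\mathcal{K}$ itself (yielding restriction $\tau\cup B_{s-j+1}$ at $F_j$ in the reverse facet order). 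Thus $F_1,\ldots,F_s$ shells $(\Omega,\varphi(\Delta))$ and $F_s,\ldots,F_1$ shells $(\Omega,\varphi'(\Delta'))$, as required by Definition~\ref{def:shellablePseudoCobord}. The main obstacle is the bookkeeping for (ii): distinguishing the four kinds of $d$-faces (facets of $\Delta$ disjoint from $\tau$; facets $\tau\cup G$ of $\st_\Delta(\tau)$; ``new'' facets $(\tau\setminus v)\cup G_i$ of $\partial\overline{\tau}*\mathcal{K}$; and interior $d$-faces $\tau\cup(G_i\setminus u)$ with $G_i\setminus u\notin\partial\mathcal{K}$) and matching the two shelling orders to the pair of relative complexes, with inducedness of $\lk_\Delta(\tau)$ in $\mathcal{K}$ used throughout to prevent spurious identifications.
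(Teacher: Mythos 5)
Your proof is correct and takes essentially the same approach as the paper: both build $\Omega=\Delta\cup(\overline{\tau}*\mathcal{K})$ from a shellable ball $\mathcal{K}$ with $\partial\mathcal{K}=\lk_\Delta(\tau)$ (induced), observe the key index reversal between the two shelling orders, and verify shellability by reducing, via the decomposition $\sigma=\sigma_\tau\cup\sigma_G$, to the shelling of $\mathcal{K}$ and the reverse shelling of $(\mathcal{K},\partial\mathcal{K})$ from Lemma~\ref{reverse-shelling-ball}. The only cosmetic differences are your direct verification of (i) by exhibiting $\Delta'$ as $\Delta$ with the $d$-ball $\st_\Delta(\tau)$ replaced by $\partial\overline{\tau}*\mathcal{K}$ (the paper instead deduces (i) from (ii) via bistellar equivalence) and your reindexing $F_j:=\tau\cup G_{s-j+1}$.
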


\begin{proof}
By Theorem \ref{shellable-ball}, there is a shellable ball, $\mathcal{K}$, such that $\partial\mathcal{K} = \lk_{\Delta}(\tau)$ and such that $\partial \mathcal{K}$ is an induced subcomplex of $\mathcal{K}$.  Let $F_1, \ldots, F_t$ be a shelling order on $\mathcal{K}$.  As in the proof of Lemma \ref{reverse-shelling-ball}, let $[\emptyset, F_j] \setminus \bigcup_{i<j} [\emptyset, F_i] = [B_j,F_j]$ and $A_j:= F_j \setminus B_j$.  For each $j$, let $G_j:= F_j \cup \tau$.

Consider $\Omega:= \Delta \cup ([\emptyset,\tau]*\mathcal{K})$ and $\Delta':= (\Delta \setminus \tau) \cup ([\emptyset, \tau)*\mathcal{K})$.  Since $\partial \mathcal{K}$ is an induced subcomplex of $\mathcal{K}$, $\Delta'$ is a simplicial complex.  Furthermore, $\Omega$ is a nonpure pseudomanifold whose $(d+1)$-faces are $G_1, \ldots, G_t$.  We claim that the following holds (note that the order is perhaps different from one's initial intuition):
\begin{enumerate}
\item $G_t, \ldots, G_1$ is a shelling order on $(\Omega, \Delta)$ and
\item $G_1, \ldots, G_t$ is a shelling order on $(\Omega, \Delta')$.
\end{enumerate}
This claim will establish that $\Omega$ is a shellable pseudo-cobordism between $\Delta$ and $\Delta'$, which in turn would imply that  $\Delta'$ is bistellar equivalent, hence PL homeomorphic, to $\Delta$.  The result follows  since  $\tau \notin \Delta'$ by definition of $\Delta'$.

To establish the claim, we first note that $G_t, \ldots, G_1$ is a shelling order on $(\Omega,\Delta)$. This is because
\[
\sigma \in [\emptyset, G_j] \setminus \Bigg( \bigcup_{i=j+1}^t [\emptyset, G_i] \cup \Delta \Bigg) \;\;
\Leftrightarrow \;\;
\sigma \setminus \tau \in [\emptyset, F_j] \setminus \Bigg( \bigcup_{i=j+1}^t [\emptyset, F_i] \cup \partial \mathcal{K} \Bigg)
=[A_j,F_j],
\]
where the last equality holds by Eq.~\eqref{eqn:reverse-shelling}. Thus, $A_j \subseteq \sigma \setminus \tau$, that is, $\sigma \in [A_j, G_j]$.

Next, we show that $G_1, \ldots, G_t$ is a shelling order on $(\Omega, \Delta')$.  Observe that 
$$
\sigma \in [\emptyset, G_j] \setminus \Bigg( \bigcup_{i=1}^{j-1} [\emptyset, G_i] \cup \Delta' \Bigg)
\text{\quad if and only if \quad} 
\tau \subseteq \sigma \text{ and }\sigma \setminus \tau \in [\emptyset, F_j] \setminus \bigcup_{i=1}^{j-1} [\emptyset, F_i].
$$  
Indeed, if $\sigma \subseteq G_j$ and $\sigma \cap \tau \subsetneq \tau$, then $\sigma \in [\emptyset, \tau)*\mathcal{K} \subseteq \Delta'$.  Otherwise, if $\tau \subseteq \sigma$, then $\sigma \notin \Delta'$ and, for each $i$, $\sigma \subseteq G_i$ if and only if $\sigma \setminus \tau \subseteq F_i$.  Thus 
$$
[\emptyset, G_j] \setminus \Bigg( \bigcup_{i=1}^{j-1} [\emptyset, G_i] \cup \Delta' \Bigg) = [B_j \cup \tau, G_j],
$$
and the statement follows.
\end{proof}

\begin{corollary} \label{step2}
Let $\Delta$ and $\Gamma$ be PL homeomorphic closed combinatorial $d$-manifolds.  There exists a shellable pseudo-cobordism $(\Omega, \varphi, \psi)$ between $\Delta$ and $\Gamma$ such that $\varphi(\Delta) \cap \psi(\Gamma) = \{\emptyset\}$.
\end{corollary}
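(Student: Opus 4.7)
The plan is to build $\Omega$ as a composition of a ``buffer'' pseudo-cobordism $\Omega_1$ that carries $\Delta$ to a combinatorially disjoint copy of itself, followed by a shellable pseudo-cobordism $\Omega_2$ supplied by Pachner's theorem. The key mechanism is that Lemma~\ref{lem:eliminate-face} lets us erase any chosen face of a closed combinatorial $d$-manifold at the cost of an elementary shellable pseudo-cobordism, so by iterating vertex erasure we can arrange that the outgoing manifold shares no vertex at all with $\Delta$.

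Concretely, I would enumerate $V(\Delta) = \{v_1, \ldots, v_n\}$ and set $\Delta^{(0)} := \Delta$. For each $i$ in turn, if $v_i \in \Delta^{(i-1)}$ I would apply Lemma~\ref{lem:eliminate-face} to the pair $(\Delta^{(i-1)}, v_i)$ to obtain a closed combinatorial $d$-manifold $\Delta^{(i)}$ PL homeomorphic to $\Delta^{(i-1)}$ with $v_i \notin \Delta^{(i)}$, together with a shellable pseudo-cobordism $P_i$ between them; otherwise I would take $\Delta^{(i)} := \Delta^{(i-1)}$ and skip this step. A crucial freedom is that the shellable ball $\mathcal{K}_i$ used inside Lemma~\ref{lem:eliminate-face} can be chosen so that its interior vertices are fresh labels, disjoint from $V(\Delta)$ and from all interior vertices introduced in previous stages. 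Using the explicit formula $\Delta^{(i)} = (\Delta^{(i-1)} \setminus v_i) \cup \mathcal{K}_i$ from the proof of that lemma (note $[\emptyset, v_i) = \{\emptyset\}$ since $v_i$ is a vertex), a short induction shows that $v_j \notin \Delta^{(i)}$ for every $j \leq i$: deletion cannot reintroduce a missing vertex, and any new vertex living in $\mathcal{K}_i$ lies either in $\partial \mathcal{K}_i = \lk_{\Delta^{(i-1)}}(v_i) \subseteq \Delta^{(i-1)}$ (where $v_1, \ldots, v_{i-1}$ are already absent by the inductive hypothesis) or among the fresh interior labels. Setting $\Delta_* := \Delta^{(n)}$, we therefore have $V(\Delta) \cap V(\Delta_*) = \emptyset$, and in particular $\Delta \cap \Delta_* = \{\emptyset\}$ as simplicial complexes.

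By Lemma~\ref{lem:GluingCobord}, the composition $\Omega_1 := P_1 \cup \cdots \cup P_n$ (skipping the trivial steps) is a shellable pseudo-cobordism between $\Delta$ and $\Delta_*$ whose two ends meet only in $\emptyset$. Since $\Delta_*$ is PL homeomorphic to $\Delta$, and hence to $\Gamma$, Theorem~\ref{homeomorphic-bistellar-equivalent} combined with Theorem~\ref{thm:BistCobord} furnishes a shellable pseudo-cobordism $\Omega_2$ between $\Delta_*$ and $\Gamma$. A second application of Lemma~\ref{lem:GluingCobord} then yields a shellable pseudo-cobordism $\Omega := \Omega_1 \cup_{\Delta_*} \Omega_2$ between $\Delta$ and $\Gamma$. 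Any face common to the images of $\Delta$ and $\Gamma$ inside $\Omega$ must live in $\Omega_1 \cap \Omega_2$, which is precisely the image of $\Delta_*$; but inside $\Omega_1$ the image of $\Delta$ meets that of $\Delta_*$ only in $\emptyset$, so the intersection of the two ends in $\Omega$ is $\{\emptyset\}$, as required.

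The main obstacle is the bookkeeping in the first step: one has to make sure that successive applications of Lemma~\ref{lem:eliminate-face} never secretly reintroduce a vertex of $\Delta$ that has already been erased. Choosing completely new labels for the interior vertices of each $\mathcal{K}_i$, together with the explicit structural description of $\Delta^{(i)}$, makes this invariant transparent, and everything else — shellability of the composed pseudo-cobordism and the PL homeomorphism between $\Delta_*$ and $\Gamma$ — follows at once from results already established in the paper.
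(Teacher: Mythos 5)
Your proof is correct and follows the paper's argument essentially line for line: eliminate the vertices of $\Delta$ one at a time via Lemma~\ref{lem:eliminate-face}, compose the resulting shellable pseudo-cobordisms using Lemma~\ref{lem:GluingCobord}, and then bridge to $\Gamma$ via Theorem~\ref{thm:BistCobord}. One small terminological slip: the output of Lemma~\ref{lem:eliminate-face} is a shellable pseudo-cobordism, not an ``elementary'' one (which by definition has a single $(d+1)$-face); otherwise, your care about choosing fresh labels for the interior vertices of each $\mathcal{K}_i$ and tracking by induction which $v_j$ remain absent usefully makes explicit a bookkeeping point the paper leaves implicit.
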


\begin{proof}
We list the vertices of $\Delta$ in an arbitrary order, $v_1, \ldots, v_n$, and define a sequence of closed combinatorial manifolds $\Delta_0, \Delta_1, \ldots, \Delta_{n}$ in the following way. Let $\Delta_0 := \Delta$. For $1 \leq j \leq n$ we apply Lemma \ref{lem:eliminate-face} to the face $\tau = \{v_j\} \in \Delta_{j-1}$ to obtain a new closed combinatorial manifold $\Delta_j$ and a shellable pseudo-cobordism $\Omega_j$ between $\Delta_{j-1}$ and $\Delta_j$. Since $\Delta_j = (\Delta_{j-1} \setminus v_j) \cup \mathcal{K}_j$, it follows that $\{v_j\} \notin \Delta_j$ and $\{v_{j+1}\} \in \Delta_j$.

Let $\Delta' = \Delta_n$, and let $\widetilde{\Omega}$ be the composition of the shellable pseudo-cobordisms $\Omega_1, \Omega_2, \ldots, \Omega_n$.  By Lemma \ref{lem:GluingCobord}, $(\widetilde{\Omega}, \id, \id)$ is a shellable pseudo-cobordism between $\Delta$ and $\Delta'$. In particular, $\Delta'$ is PL homeomorphic to $\Delta$, and hence PL homeomorphic to $\Gamma$. By Theorem \ref{thm:BistCobord}, there is a shellable pseudo-cobordism $(\widetilde{\Omega}', \id, \psi)$ between $\Delta'$ and $\Gamma$. The composition $\Omega:= \widetilde{\Omega} \cup_{\Delta'} \widetilde{\Omega}'$ is a shellable pseudo-cobordism between $\Delta$ and $\Gamma$.

Since no vertex of $\Delta$ belongs to $\Delta'$, these two complexes share no nonempty faces: $\Delta \cap \Delta' = \{\emptyset\}$. As a consequence, $\Delta \cap \psi(\Gamma) = \{\emptyset\}$.
\end{proof}

\begin{remark}
\label{rem:ConeLink}
If the links of all vertices are shellable, we can take $\mathcal{K}_j = \overline{v'_j} \ast \lk_{\Delta_{j-1}} (\{v_j\})$. In this case each of $\Delta_j$, and in particular $\Delta'$, is isomorphic to $\Delta$.
\end{remark}

\begin{example}
Let $\Delta$ be the boundary of a triangle. Applying the procedure from the first half of the proof of Corollary \ref{step2}, we obtain a cobordism of $\Delta$ with $\Delta'$ shown in Figure \ref{fig:DeltaPrime}. Here $\mathcal{K}_j$ is the cone over the link, as in Remark \ref{rem:ConeLink}.

\begin{figure}[ht]
\input{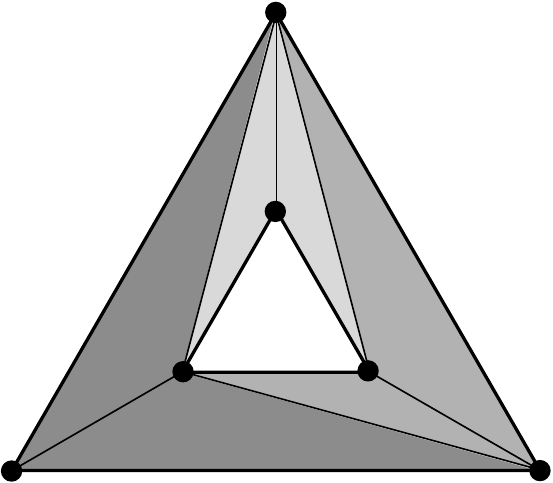_t}
\caption{A shellable pseudo-cobordism with disjoint ends.}
\label{fig:DeltaPrime}
\end{figure}
\end{example}

\subsection{Stellar subdivisions preserve shellability}

Provan and Billera \cite{Provan-Billera-80} showed that shellability of simplicial complexes is preserved under stellar subdivision.  Our next goal is to prove an analogous result for shellable pseudo-cobordisms; i.e., if $\Omega$ is a shellable pseudo-cobordism between $\Delta$ and $\Delta'$, and $\sigma \in \Omega$, then $\sd_{\sigma}(\Omega)$ is a shellable pseudo-cobordism between $\sd_{\sigma}(\Delta)$ and $\sd_{\sigma}(\Delta')$.  In proving this result, we make use of the following elementary facts. 

\begin{lemma} \ 
\begin{enumerate}
\item Any ordering of the facets of the boundary of a simplex is a shelling order.
\item If $F = A \sqcup B$, then $\partial \overline{F} = (\partial\overline{A}*\overline{B}) \cup (\overline{A}*\partial\overline{B})$. 
\item Suppose $\Gamma$ and $\Delta$ are pure simplicial complexes with disjoint vertex sets.  
\begin{enumerate}
\item If $\Gamma$ and $\Delta$ are shellable,  then $\Gamma * \Delta$ is also shellable.  Further, if $F_1, \ldots, F_s$ is a shelling order for $\Gamma$ and $G_1, \ldots, G_t$ is a shelling order for $\Delta$, then $F_1 \cup G_1, F_1 \cup G_2, \ldots, F_1 \cup G_t, F_2 \cup G_1, \ldots, F_s \cup G_t$ is a shelling order for $\Gamma*\Delta$.
\item If $F \in \Gamma$ and $G \in \Delta$, then $\lk_{\Gamma*\Delta}(F \cup G) = \lk_{\Gamma}(F) * \lk_{\Delta}(G)$.
\item If $F \in \Gamma$ and $G \in \Delta$, then $(\Gamma*\Delta) \setminus (F \cup G) = \left((\Gamma \setminus F)*\Delta\right) \cup \left(\Gamma*(\Delta \setminus G)\right)$.
\end{enumerate} 
\end{enumerate}
\end{lemma}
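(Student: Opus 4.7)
The plan is to dispatch each part by a direct check using the decomposition $H = H_\Gamma \cup H_\Delta$ of faces in $\Gamma * \Delta$ and the restriction-face formulation of shellability; none of the parts requires anything deeper. For (i), write $F = \{v_0, \ldots, v_n\}$, so the facets of $\partial\overline{F}$ are the $F \setminus \{v_i\}$. Fix any ordering $\sigma_j := F \setminus \{v_{i_j}\}$. A face $\tau \subseteq \sigma_j$ is contained in some earlier $\sigma_k = F \setminus \{v_{i_k}\}$ iff $v_{i_k} \notin \tau$ for some $k < j$, so the uncovered faces form the interval $[\{v_{i_1}, \ldots, v_{i_{j-1}}\},\, F \setminus \{v_{i_j}\}]$, giving $r(\sigma_j) = \{v_{i_1}, \ldots, v_{i_{j-1}}\}$. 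Part (ii) is the set-theoretic observation that a proper subset of $F = A \sqcup B$ must fail to contain all of $A$ or all of $B$, placing it in $\partial \overline{A} * \overline{B}$ or $\overline{A} * \partial \overline{B}$ respectively.

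For part (iii)(a), every face $H \subseteq F_i \cup G_j$ decomposes uniquely as $H = H_\Gamma \cup H_\Delta$ with $H_\Gamma \subseteq F_i$ and $H_\Delta \subseteq G_j$. In the specified lexicographic order, the facets preceding $F_i \cup G_j$ are those $F_{i'} \cup G_{j'}$ with either $i' < i$ (and $j'$ arbitrary) or $i' = i$ and $j' < j$. The crucial observation is that when $i' < i$ the index $j'$ may be chosen freely, in particular equal to $j$, so ``$H$ lies in some earlier facet'' reduces to ``$H_\Gamma \subseteq F_{i'}$ for some $i' < i$'' or ``$H_\Delta \subseteq G_{j'}$ for some $j' < j$.'' Therefore the uncovered part is
\[
[\emptyset, F_i \cup G_j] \setminus \bigcup_{(i',j')\,\text{earlier}} [\emptyset, F_{i'} \cup G_{j'}] = [r(F_i) \cup r(G_j),\, F_i \cup G_j],
\]
establishing shellability with $r(F_i \cup G_j) = r(F_i) \cup r(G_j)$. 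Purity of $\Gamma * \Delta$ follows at once from purity of $\Gamma$ and $\Delta$.

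Parts (iii)(b) and (iii)(c) both split cleanly along the same decomposition. For (iii)(b), the condition $F \cup G \cup H \in \Gamma * \Delta$ is equivalent to $F \cup H_\Gamma \in \Gamma$ and $G \cup H_\Delta \in \Delta$, while $H \cap (F \cup G) = \emptyset$ is equivalent to $H_\Gamma \cap F = \emptyset$ and $H_\Delta \cap G = \emptyset$, so the link factors as $\lk_\Gamma(F) * \lk_\Delta(G)$. For (iii)(c), $F \cup G \not\subseteq H$ is equivalent to $F \not\subseteq H_\Gamma$ or $G \not\subseteq H_\Delta$, which is precisely the union on the right-hand side. The only place where one must pause is (iii)(a), specifically the observation that the free second coordinate can be padded out by $j$ when $i' < i$; everything else is routine unpacking of definitions.
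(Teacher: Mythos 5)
The paper states this lemma without proof, presenting it as a list of ``elementary facts'' used in the proof of Proposition 5.4, so there is no argument in the text to compare against. Your proof is correct and supplies the standard verification: in parts (i), (iii)(a)--(c) you decompose a face $H$ of the join into $H_\Gamma \cup H_\Delta$ and work with the restriction-face characterization of shellability, and in (iii)(a) you correctly identify the one non-automatic point --- that for a given $i' < i$ one may pad the second coordinate to $j' = j$, so that ``$H$ lies in an earlier facet'' splits cleanly into the two coordinate-wise conditions --- from which the restriction face $r(F_i \cup G_j) = r(F_i) \cup r(G_j)$ falls out. Part (ii) is the correct set-theoretic observation. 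Nothing needs fixing.
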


\begin{proposition} \label{subdivision-preserves-shellability}
Let $\Delta$ and $\Delta'$ be closed combinatorial $d$-manifolds and let $\Omega$ be a shellable pseudo-cobordism between $\Delta$ and $\Delta'$. Identify $\Delta$ and $\Delta'$ with their isomorphic images in $\Omega$. Then for any face $\sigma \in \Omega$, the subdivision $\sd_{\sigma}(\Omega)$ is a shellable pseudo-cobordism between $\sd_{\sigma}(\Delta)$ and $\sd_{\sigma}(\Delta')$ (with the convention that $\sd_{\tau}(\Gamma) = \Gamma$ if $\tau \notin \Gamma)$.
\end{proposition}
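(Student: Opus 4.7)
My plan is to produce an explicit shelling of $\sd_\sigma(\Omega)$ by ``block-replacing'' the given shelling of $\Omega$, in the spirit of the Provan--Billera argument for Theorem \ref{subdiv-shellability}, and to verify that the resulting order is compatible with $\sd_\sigma(\Delta)$ going forward and with $\sd_\sigma(\Delta')$ going backward. The case $\sigma \notin \Omega$ is trivial (the subdivision does nothing), so we may assume $\sigma \in \Omega$.

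Fix a linear order $v_1, \ldots, v_k$ on the vertices of $\sigma$, and let $a$ be the vertex introduced by the subdivision. Let $F_1, \ldots, F_t$ be a shelling of $\Omega$ witnessing Definition \ref{def:shellablePseudoCobord}. Form a new ordering of the facets of $\sd_\sigma(\Omega)$ by replacing each $F_j$ with a block: if $\sigma \not\le F_j$, the block is just $F_j$ itself (which remains a facet of $\sd_\sigma(\Omega)$); if $\sigma \le F_j$, the block is $F_j^{v_1}, F_j^{v_2}, \ldots, F_j^{v_k}$, where $F_j^{v_i} := (F_j \setminus v_i) \cup \{a\}$ are the facets of $\sd_\sigma(\overline{F_j})$. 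A short direct calculation shows that within a single block the list $F_j^{v_1}, \ldots, F_j^{v_k}$ is an absolute shelling of $\sd_\sigma(\overline{F_j})$ with restriction face $\{v_1, \ldots, v_{i-1}\}$ assigned to $F_j^{v_i}$. As a preliminary, I would verify that $\sd_\sigma(\Omega)$ is a nonpure pseudomanifold and that $\sd_\sigma(\Delta) \cup \sd_\sigma(\Delta') = \widetilde\partial(\sd_\sigma(\Omega))$ with the intersection consisting exactly of the $d$-faces of degree $0$; this is a direct case analysis depending on whether a $d$-face of $\sd_\sigma(\Omega)$ contains $\sigma$ or was newly created by the subdivision.

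Next, I would verify that the block-replaced ordering is a shelling of $(\sd_\sigma(\Omega), \sd_\sigma(\Delta))$. For an unsubdivided facet $F_j$ (with $\sigma \not\le F_j$), no face of $F_j$ contains $\sigma$, so neither the subdivision nor the preceding subdivided blocks affect the relevant intervals, and the original restriction face $B_j$ still works. For a subdivided facet $F_j^{v_i}$, the candidate restriction face will combine the ``global'' data from $B_j$ with the ``local'' data $\{v_1, \ldots, v_{i-1}\}$ coming from the block shelling of $\sd_\sigma(\overline{F_j})$. The verification then proceeds through Lemma \ref{alt-def-rel-shelling}: for each vertex $v \le F_j^{v_i}$ one checks whether $F_j^{v_i} \setminus v$ lies in an earlier block or in $\sd_\sigma(\Delta)$, separating the three cases $v = a$, $v \in \sigma \setminus \{v_i\}$, and $v \in F_j \setminus \sigma$. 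Each case reduces to the original shelling condition on $\Omega$ via the natural correspondence between faces of $\sd_\sigma(\overline{F_j})$ and faces of $F_j$ (with faces not containing $a$ corresponding to proper faces of $F_j$ not containing $\sigma$, and faces containing $a$ to the ``star of $a$'' data inside $\sd_\sigma(\overline{F_j})$).

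Finally, to obtain the reverse shelling for $\sd_\sigma(\Delta')$, I would observe that reversing the block-replaced ordering globally is exactly the block-replacement procedure applied to the reverse shelling $F_t, \ldots, F_1$ of $(\Omega, \Delta')$ using the opposite linear order $v_k, \ldots, v_1$ on $V(\sigma)$. Since the previous step works for \emph{any} linear order on $V(\sigma)$, this immediately furnishes a shelling of $(\sd_\sigma(\Omega), \sd_\sigma(\Delta'))$, completing the verification of Definition \ref{def:shellablePseudoCobord}. The main obstacle will be the bookkeeping in the third paragraph: the precise form of the restriction face for $F_j^{v_i}$ depends on whether $v_i \in B_j$, on whether $\sigma \in \Delta$, and on which of the preceding subdivided or unsubdivided blocks can cover the various codimension-one faces of $F_j^{v_i}$, so the case analysis takes some care even though each individual case reduces cleanly to the corresponding statement for $\Omega$.
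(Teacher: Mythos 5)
Your strategy—direct block-replacement of the shelling, in the Provan--Billera spirit—is a genuinely different route from the paper's, which first reduces to elementary pseudo-cobordisms via Proposition~\ref{prp:ShellCompos} and then carries out an explicit case analysis on the position of $\sigma$ relative to $A$ and $B$ in $\overline A\ast\overline B$. Your approach could work, but as written it has a real gap: the assertion that the block-replaced order is a shelling of $(\sd_\sigma(\Omega),\sd_\sigma(\Delta))$ for \emph{any fixed} linear order $v_1,\ldots,v_k$ on $V(\sigma)$ is false, and this assertion is exactly what your reversal argument leans on.

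Here is the failure concretely. Fix $j$ with $\sigma\le F_j$, and let $B_j$ and $A_j=F_j\setminus B_j$ be the restriction faces of $F_j$ in the forward and reverse shellings of $\Omega$. For a face $G\le F_j^{v_i}$, write $\pi(G)=\sigma\cup(G\setminus\{a\})$ if $a\in G$ and $\pi(G)=G$ otherwise; then $G$ lies in an earlier facet of $\sd_\sigma(\Omega)$ or in $\sd_\sigma(\Delta)$ iff $\{v_1,\ldots,v_{i-1}\}\not\subseteq G$ or $\pi(G)\in\Omega_{j-1}$. Unwinding this, the complement $[\emptyset,F_j^{v_i}]\setminus(\text{earlier}\cup\sd_\sigma(\Delta))$ has a \emph{single} minimal element only when either $v_i\in B_j$ or $B_j\cap\sigma\subseteq\{v_1,\ldots,v_{i-1}\}$; if $v_i\in A_j$ and some $v_\ell\in B_j\cap\sigma$ has $\ell>i$, then both $B_j\cup\{v_1,\ldots,v_{i-1}\}$ and $\{a\}\cup(B_j\setminus\sigma)\cup\{v_1,\ldots,v_{i-1}\}$ are incomparable minimal elements, so the set is not an interval and the shelling condition fails. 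A minimal example: $\Omega=\overline{\{a_1,a_2\}}\ast\overline{\{b_1,b_2\}}$, $\Delta=\overline{\{a_1,a_2\}}\ast\partial\overline{\{b_1,b_2\}}$, $\sigma=\{a_1,b_1\}$ (so $B_1=\{b_1,b_2\}$, $A_1=\{a_1,a_2\}$); with vertex order $v_1=a_1$, $v_2=b_1$, the faces of $F_1^{a_1}=\{a_2,b_1,b_2,w\}$ outside $\sd_\sigma(\Delta)$ have the two minimal elements $\{b_1,b_2\}$ and $\{w,b_2\}$. The fix is to let the order on $V(\sigma)$ depend on $j$: inside the $j$-th block list the vertices of $B_j\cap\sigma$ before those of $A_j\cap\sigma$. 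This repairs both directions at once, since reversing the block then lists $A_j\cap\sigma$ first, which is exactly the condition needed for the backward shelling of $(\sd_\sigma(\Omega),\sd_\sigma(\Delta'))$. You flag the bookkeeping as a difficulty, but the actual issue is structural (the set need not be an interval), not merely a matter of identifying $r(F_j^{v_i})$, so the per-facet choice of vertex order needs to be made explicit rather than deferred.
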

\begin{proof}
We begin the proof with several reductions.  

First, it suffices to consider the case that $\Omega$ is an elementary pseudo-cobordism.  Indeed,
by Proposition \ref{prp:ShellCompos}, every shellable pseudo-cobordism can be represented as a composition of elementary ones.
A stellar subdivision of a composition of pseudo-cobordisms is the composition of their stellar subdivisions. Thus, if we show that subdivisions of elementary pseudo-cobordisms are shellable, then the result would follow from Lemma \ref{lem:GluingCobord} on the composition of shellable pseudo-cobordisms.

Let $\Omega$ be an elementary pseudo-cobordism. Then $\Omega$ is a simplicial complex of the form
\[
\Omega = \Delta \cup (\overline{A}*\overline{B}) \quad \text{with} \quad \Delta = \Omega \setminus B \quad \text{and} \quad \Delta' = \Omega \setminus A.
\]
Now, if $\sigma \not\subseteq A \cup B$, then $\sigma \in \Omega \setminus (\overline{A}*\overline{B}) = \Delta \cap \Delta'$, in which case $\sd_\sigma(\Omega)=\Omega$ is indeed an elementary pseudo-cobordism between the combinatorial manifolds $\sd_\sigma(\Delta)$ and $\sd_\sigma(\Delta')$.

Hence we may assume that $\sigma \subseteq A \cup B$. As the parts of $\Delta$ and $\Delta'$ lying outside of $\overline{A}*\overline{B}$ play no role in the subdivision and in the shelling, it suffices to consider the case that
\[
\Omega = \overline{A}*\overline{B}, \quad \Delta = \overline{A}*\partial\overline{B}, \quad \Delta' = \partial\overline{A}*\overline{B}.
\]
It is clear that for any $\sigma \subseteq A \cup B$ the subdivided simplex $\sd_{\sigma}(\Omega)$ is a pseudo-cobordism between $\sd_{\sigma}(\Delta)$ and $\sd_{\sigma}(\Delta')$.  Therefore, we need only to exhibit that this pseudo-cobordism is shellable according to Definition \ref{def:shellablePseudoCobord}.

In order to complete the proof, we must consider three cases based on the relative positions of the faces $A$, $B$, and $\sigma$.

\smallskip\noindent\textbf{Case 1:} $\sigma \subseteq A$ (or by symmetry, $\sigma \subseteq B$).

First, suppose $\sigma \subsetneq A$ and let $X = A \setminus \sigma$ (which is nonempty).  Observe that 
\begin{eqnarray*}
\sd_{\sigma}(\Omega) &=& \overline{a}*\partial\overline{\sigma}*\overline{X}*\overline{B}, \\
\sd_{\sigma}(\Delta) &=& \overline{a}*\partial\overline{\sigma}*\overline{X}*\partial\overline{B}, \text{ \qquad and} \\
\sd_{\sigma}(\Delta') &=& \partial\overline{\sigma} * \overline{X}*\overline{B} \cup \overline{a}*\partial\overline{\sigma}*\partial\overline{X}*\overline{B}.
\end{eqnarray*}

Hence $$(\sd_{\sigma}(\Omega), \sd_{\sigma}(\Delta)) = \{B \cup F \mid F \in \overline{a}*\partial\overline{\sigma} *\overline{X}\},$$  and so as a set system, $(\sd_{\sigma}(\Omega), \sd_{\sigma}(\Delta))$ is isomorphic to the face poset of the simplicial complex $\overline{a}*\partial\overline{\sigma}*\overline{X}$, which is shellable under \textit{any} ordering of its facets.  Thus $(\sd_{\sigma}(\Omega), \sd_{\sigma}(\Delta))$ is also shellable under any ordering of its facets.  

Similarly, $$(\sd_{\sigma}(\Omega), \sd_{\sigma}(\Delta')) = \{a \cup X \cup F \mid F \in \partial\overline{\sigma}*\overline{B}\},$$ is shellable under any ordering of its facets.  Therefore, if we choose an arbitrary ordering $F_1, F_2, \ldots, F_t$ of the $(d+1)$-faces of $\sd_{\sigma}(\Omega)$, then $F_1, F_2, \ldots, F_t$ is a shelling order on $(\Omega, \Delta)$ and $F_t, F_{t-1},\ldots,F_1$ is a shelling order on $(\Omega,\Delta')$.  Thus $\sd_{\sigma}(\Omega)$ is a shellable pseudo-cobordism between $\Delta$ and $\Delta'$.

On the other hand, if $\sigma = A$, then 
\begin{eqnarray*}
\sd_{\sigma}(\Omega) &=& \overline{a}*\partial\overline{A}*\overline{B}, \\
\sd_{\sigma}(\Delta) &=& \overline{a}*\partial\overline{A}*\partial\overline{B}, \text{\qquad and} \\
\sd_{\sigma}(\Delta') &=& \partial \overline{A}*\overline{B}.
\end{eqnarray*}
Thus $(\sd_{\sigma}(\Omega),\sd_{\sigma}(\Delta)) = \{B \cup F \mid F \in \overline{a}*\partial\overline{A}\}$ and $(\sd_{\sigma}(\Omega), \sd_{\sigma}(\Delta')) = \{a \cup F \mid F \in \partial \overline{A}*\overline{B}\}$.  As above, each of these relative complexes is shellable under any ordering of its facets, and hence $\sd_{\sigma}(\Omega)$ is a shellable pseudo-cobordism between $\sd_{\sigma}(\Delta)$ and $\sd_{\sigma}(\Delta')$.

\smallskip\noindent\textbf{Case 2:} $A \subsetneq \sigma$ (or by symmetry, $B \subsetneq \sigma$).

First we consider the case that $\sigma = A \cup B$ so that 
\[\sd_{\sigma}(\Omega) = \overline{a}*\partial\overline{\sigma} = \overline{a}*(\partial\overline{A}*\overline{B} \cup \overline{A}*\partial\overline{B}),  \;\; \sd_{\sigma}(\Delta) = \Delta, \; \mbox{ and } \; \sd_{\sigma}(\Delta') = \Delta'.
\]
Let $\Omega':= \overline{a}*\partial\overline{A}*\overline{B}$ and $\Omega'':= \overline{a}*\overline{A}*\partial\overline{B}$.  Note that 
\begin{eqnarray*}
\Omega' &=& \sd_A(\Omega), \\
\Omega'' &=& \sd_B(\Omega), \\
\Omega' \cup \Omega'' &=& \Omega, \qquad\mbox{and} \\
\Omega' \cap \Omega'' &=& \overline{a}*\partial\overline{A} *\partial\overline{B} = \sd_{A}(\Delta) = \sd_{B}(\Delta').
\end{eqnarray*}

By our observations in Case 1, $\Omega''$ is a shellable pseudo-cobordism between $\sd_B(\Delta) = \Delta = \sd_{\sigma}(\Delta)$ and $\sd_B(\Delta')=\sd_A(\Delta)$, while $\Omega'$ is a shellable pseudo-cobordism between $\sd_A(\Delta)$ and $\sd_A(\Delta') = \Delta' = \sd_{\sigma}(\Delta')$.  Therefore, by Lemma \ref{lem:GluingCobord}, the composition $\Omega = \Omega' \cup \Omega''$ is a shellable pseudo-cobordism between $\Delta = \sd_{\sigma}(\Delta)$ and $\Delta' = \sd_{\sigma}(\Delta')$.

Next, we consider the case that $\sigma \neq A \cup B$.  Let $X = \sigma \setminus A = \sigma \cap B$ and $Y = B \setminus \sigma$ so that $B = X \cup Y$, $\sigma = A \cup X$, and $X$ and $Y$ are nonempty.

Observe that
\begin{eqnarray*}
\sd_{\sigma}(\Omega) &=& \overline{a}*\partial\overline{\sigma} * \overline{Y} \\
&=& \overline{a}*\overline{A}*\partial\overline{X}*\overline{Y} \ \cup \  \overline{a}*\partial\overline{A}*\overline{X}*\overline{Y} \\
&=& \overline{a}*\overline{A}*\partial\overline{X}*\overline{Y} \ \sqcup \ \{X \cup F \mid F \in \overline{a}*\partial\overline{A}*\overline{Y}\} \text{\qquad and} \\
\sd_{\sigma}(\Delta) &=& \Delta \setminus \sigma \ \cup \  \overline{a}*\partial\overline{\sigma}*\lk_{\Delta}(\sigma) \\
&=& (\partial\overline{A}*\partial\overline{B} \ \cup \  \overline{A}*\partial\overline{X} *\overline{Y}) \ \cup \  \overline{a}*\partial\overline{\sigma}*\partial\overline{Y} \\
&=& (\partial\overline{A}*\partial\overline{X}*\overline{Y} \ \cup \  \partial\overline{A}*\overline{X}*\partial\overline{Y} \ \cup \  \overline{A}*\partial\overline{X}*\overline{Y}) \\
&& \cup\ \overline{a}*\overline{A}*\partial\overline{X}*\partial\overline{Y} \ \cup \  \overline{a}*\partial\overline{A}*\overline{X}*\partial\overline{Y} \\
&=& \overline{A}*\partial\overline{X}*\overline{Y} \ \cup \  \overline{a}*\overline{A}*\partial\overline{X}*\partial\overline{Y} \ \cup \  \overline{a}*\partial\overline{A}*\overline{X}*\partial\overline{Y}.
\end{eqnarray*}

Thus $$(\sd_{\sigma}(\Omega),\sd_{\sigma}(\Delta)) = \{a \cup Y \cup F \mid F \in \overline{A}*\partial\overline{X}\} \sqcup \{X \cup Y \cup F \mid F \in \overline{a}*\partial\overline{A}\}.$$  Notice that because these two sets are disjoint, it is still the case that any ordering of the facets of $(\sd_{\sigma}(\Omega), \sd_{\sigma}(\Delta))$ is a shelling order.

Finally, since $\sigma \notin \Delta'$, $$\sd_{\sigma}(\Delta') = \partial\overline{A}*\overline{B} = \partial \overline{A}*\overline{X}*\overline{Y}.$$ Writing $$\sd_{\sigma}(\Omega) = \overline{a}*\partial\overline{A}*\overline{X}*\overline{Y} \sqcup \{A \cup F \mid F \in \overline{a}*\partial\overline{X}*\overline{Y}\},$$ we see that 

$$(\sd_{\sigma}(\Omega), \sd_{\sigma}(\Delta')) = \{a \cup F \mid F \in \partial\overline{A}*\overline{X}*\overline{Y}\} \sqcup \{A \cup F \mid F \in \overline{a}*\partial\overline{X}*\overline{Y}\},$$ which is shellable under any ordering of its facets.

\smallskip\noindent\textbf{Case 3:} $\sigma \cap A$, $\sigma \cap B$, $A \setminus \sigma$, and $B \setminus \sigma$ are all nonempty.

As in the previous case, consider $W = A \setminus \sigma$, $X = \sigma \cap A$, $Y = \sigma \cap B$, and $Z = B \setminus \sigma$, all of which are nonempty.  Note $\sigma = X \cup Y$.

As in Case 2, we can write
\begin{eqnarray*}
\sd_{\sigma}(\Omega) &=& \overline{a}*\overline{X}*\partial\overline{Y}*\overline{W}*\overline{Z} \cup \overline{a}*\partial\overline{X}*\overline{Y}*\overline{W}*\overline{Z} \\
&=& \overline{a}*\overline{X}*\partial\overline{Y}*\overline{W}*\overline{Z} \sqcup \{Y \cup F \mid F \in \overline{a}*\partial\overline{X}*\overline{W}*\overline{Z}\} \text{\qquad and} \\
\sd_{\sigma}(\Delta) &=& \overline{a}*\partial\overline{X}*\overline{Y}*\overline{W}*\partial\overline{Z} \cup \overline{a}*\overline{X}*\partial\overline{Y}*\overline{W}*\partial\overline{Z} \cup \overline{X}*\partial\overline{Y}*\overline{W}*\overline{Z}.
\end{eqnarray*}

Thus $$(\sd_{\sigma}(\Omega),\sd_{\sigma}(\Delta)) = \{a \cup Z \cup F \mid F \in \overline{X}*\partial\overline{Y}*\overline{W}\} \sqcup \{Y \cup Z \cup F \mid F \in \overline{a}*\partial\overline{X}*\overline{W}\},$$ which is shellable under any ordering of its facets.  In this case, the roles of $A$ and $B$ are symmetric, and hence $(\sd_{\sigma}(\Omega), \sd_{\sigma}(\Delta'))$ is also shellable under any ordering of its facets.
\end{proof}

As discussed in Section \ref{Section:proof-of-main-theorem}, this completes the proofs of Theorem \ref{color-thm} and Theorem \ref{main-thm}.

\section{Open questions} We close the paper with several remarks and open questions.
\subsection{Moves for balanced manifolds with boundary}
There is an analog of Pachner's theorem for manifolds with boundary: any two PL homeomorphic combinatorial manifolds with boundary are related by a sequence of elementary shellings and inverse shellings, \cite[Theorem 5.10]{Lickorish-99}. Clearly, not every elementary shelling on a balanced simplicial complex preserves balancedness, as it can create edges whose endpoints have the same color. On the other hand, inverse shellings trivially preserve balancedness.

\begin{problem}
Can any two PL homeomorphic balanced combinatorial manifolds with boundary be related by a sequence of elementary inverse shellings and balanced elementary shellings? If not, find another set of local moves that preserve balancedness and allows to relate any two such manifolds.
\end{problem}

\subsection{Restricting the set of cross-flips}
Remark \ref{rem:3Suffice} shows that, in order to transform one balanced triangulation of a combinatorial manifold into another, some restricted set of combinatorially distinct cross-flips does suffice. Namely, one can assume that the ball $D \subset \C_d$ is a subcomplex of $\partial \sigma^{d+1}$ subdivided by the diamond construction from Section \ref{sec:Diamond}. We refer to these cross-flips as \emph{basic cross-flips}.
\begin{problem}
Give an explicit description of basic cross-flips. How many combinatorially distinct basic cross-flips are there?
\end{problem}

In dimension $2$ there are $3$ non-trivial basic cross-flips (or $6$, if the inverses are counted separately), see Remark \ref{rem:3Suffice}.

\begin{remark} In the original version of this paper, we asked if the pentagon move (the second move in the second line of Figure \ref{fig:CrossFlips2}) is necessary. As was very recently shown by Murai and Suzuki \cite{Murai-Suzuki}, the answer is yes; furthermore, if $\Delta$ and $\Gamma$ are two balanced $2$-spheres and neither of the two is the boundary of the cross-polytope, then $\Delta$ can be transformed into $\Gamma$ by using {\em only} pentagon moves!
\end{remark}

%\begin{problem}
%Is the pentagon move (the second move in the second line of Figure \ref{fig:CrossFlips2}) necessary? In other words, do balanced triangle- and edge-subdivisions and their inverses suffice to transform any balanced triangulation of a surface into any other balanced triangulation of the same surface?
%\end{problem}

\subsection{Even triangulations}
If a triangulation of a closed combinatorial $d$-manifold is balanced, then the link of each of its $(d-2)$-dimensional faces is a polygon with an even number of vertices. The triangulations with the latter property are called \emph{even triangulations}. (Topological properties of even triangulations were recently investigated in \cite{RT-15}.)

The following construction provides an obstruction for an even triangulation to be balanced. Let $\Delta$ be an even triangulation of a closed combinatorial manifold $M$. Pick a facet $\sigma_0$ of $\Delta$. Let $\gamma$ be a closed path in the dual graph of $\Delta$  (also known as the facet-ridge graph) starting and ending in $\sigma_0$. Extending a coloring of $\sigma_0$ along $\gamma$ results in a permutation of vertices of $\sigma_0$; this permutation depends only on the homotopy class of $\gamma$, due to the evenness of $\Delta$. This gives rise to a group homomorphism
\[
\phi_\Delta \colon \pi_1(M, \sigma_0) \to \Sym(\sigma_0)
\]
which we call the \emph{coloring monodromy}. An even triangulation is balanced if and only if its coloring monodromy is trivial (in particular, even triangulations of simply connected closed combinatorial manifolds are balanced). The coloring monodromy was introduced by Steve Fisk \cite{Fisk-77a} and rediscovered in \cite{Joswig-02}. See \cite{Izm-15+} for simple examples and applications.

Cross-flips on even triangulations do not change the coloring monodromy. This leads us to the following question.

\begin{problem}
Let $\Gamma$ and $\Delta$ be two even triangulations of the same combinatorial manifold $M$ with the same coloring monodromy: $\phi_\Gamma = \phi_\Delta$ up to conjugation. Does it follow that there is a sequence of cross-flips transforming $\Gamma$ into $\Delta$?
\end{problem}

More generally, a coloring monodromy is defined for every triangulation $\Delta$ of $M$ as a homomorphism from $\pi_1(M\setminus\Delta_{\odd}, \sigma_0)$ to $\Sym(\sigma_0)$, where  $\Delta_{\odd}$ is the odd subcomplex of $\Delta$, see \cite{Fisk-77a, Izm-15+, Joswig-02}. This allows us to further generalize the above question.

%%%%%%%%%%%%%%%%%%%
%%%%%%%%%%%%%%%%%%%
% Bibliography
%%%%%%%%%%%%%%%%%%%
%%%%%%%%%%%%%%%%%%%

\bibliography{biblio}
\bibliographystyle{abbrv}

\end{document}